\newtheorem{theorem}{Theorem}
\newtheorem{lemma}{Lemma}
\newtheorem{remark}{Remark}
\newtheorem{example}{Example}
\newtheorem{definition}{Definition}
\newtheorem{assumption}{Assumption}
\numberwithin{equation}{section}
\numberwithin{theorem}{section}
\numberwithin{lemma}{section}
\numberwithin{example}{section}
\numberwithin{definition}{section}
\title{Two time-stepping schemes for sub-diffusion equations with singular source terms}
\author{Han Zhou\thanks{Institute of Mathematics, Hebei University of Technology, Tianjin 300401, China. This author was partially supported by the National Natural Science Foundation of China (No. 11901151). Email: zhouhan@hebut.edu.cn}
        \and
        Wenyi Tian\thanks{Corresponding author. Center for Applied Mathematics, Tianjin University, Tianjin 300072, China. This author was partially supported by the National Natural Science Foundation of China (Nos. 11701416 and 12071343). Email: twymath@gmail.com}
}
\date{}
\begin{document}
\maketitle
\begin{abstract}
  Singular source terms in sub-diffusion equations may lead to the unboundedness of solutions, which will bring a severe reduction of convergence order of existing time-stepping schemes. In this work, we propose two efficient time-stepping schemes for solving sub-diffusion equations with a class of source terms mildly singular in time. One discretization is based on the Gr{\"u}nwald-Letnikov and backward Euler methods. First-order error estimate with respect to time is rigorously established for singular source terms and nonsmooth initial data. The other scheme derived from the second-order backward differentiation formula (BDF) is proved to possess second-order accuracy in time. Further, piecewise linear finite element and lumped mass finite element discretizations in space are applied and analyzed rigorously. Numerical investigations confirm our theoretical results.

  {\bf Keywords:} sub-diffusion equation, singular source term, convolution quadrature, backward differentiation formula, linear finite element, lumped mass finite element

  {\bf AMS subject classifications:} 65M06, 65M60, 65M15, 35R11, 35R05
\end{abstract}

\section{Introduction}
This paper concerns with the construction of efficient discrete schemes for the sub-diffusion equation with a singular source term in time, that is
\begin{equation}\label{eq:tfdesub0}
  {^C}D^{\alpha}_tu(x,t)-\Delta u(x,t)=f(x,t), ~~~ (x,t)\in \Omega\times (0, T],
\end{equation}
together with the Dirichlet boundary condition and the nonsmooth initial condition
\begin{equation}\label{eq:ibconditions}
  \begin{aligned}
    &u(x,t)=0,~~&&x\in\partial\Omega,~t>0,\\
    &u(x,0)=u^{0}(x),~~&&x\in \Omega,
  \end{aligned}
\end{equation}
where $\Omega\subset\mathbb{R}^d$, $d=1,2$, and $u^{0}(x)$ belongs to $L^{2}(\Omega)$.
The notation ${^C}D^{\alpha}_tu$ denotes the $\alpha$-th ($0<\alpha<1$) order left Caputo derivative of $u$ with respect to variable $t$, which is defined by
\begin{equation*}
  {^C}D^{\alpha}_tu(t)=\frac{1}{\Gamma(1-\alpha)}\int_0^t(t-\tau)^{-\alpha}u'(\tau)\mathrm{d}\tau,
\end{equation*}
where $\Gamma(\cdot)$ represents the Gamma function given by $\displaystyle\Gamma(s)=\int_0^{\infty}t^{s-1}e^{-t}\mathrm{d}t$ with $\Re(s)>0$.
Throughout this paper, we will restrict our consideration to the singular source term $f(x,t)\in L^{1}(0,T;L^{2}(\Omega))$.

Time-fractional diffusion equations ($0<\alpha<1$) were formulated in \cite{SchneiderW1989} and used to simulate anomalous diffusion phenomena in physics recently \cite{Metzler2000}. In contrast to some regularity results for classical second-order parabolic problems, extensive analyses have shown that the solution of a time-fractional evolution problem usually exhibits a weakly singular property near the origin even if the given data are sufficiently smooth with respect to time \cite{MillerA:1971,Brunner1986,SakamotoY:2011,Stynes2017}.

In terms of numerical approximations to this type of problems, most high-order time discretization methods were originally proposed by assuming that the solutions are relatively regular for temporal variable. For instance, the so-called L1 and L1-2, etc, schemes based on continuous piecewise polynomial interpolation were separately proposed and analyzed theoretically in \cite{Lin2007,GaoSZ2014,LvX2016}. Furthermore, for sub-diffusion equations with nonsmooth initial values, applications of the time-stepping schemes from above may lead to the order reduction to first order in time \cite{JinLZ:2016}, and the optimal convergence order can be preserved by a correction approach in \cite{Yan2018}. However, the convergence order deteriorates significantly near the initial layer \cite{Stynes2017}. Then the L1 type schemes on graded meshes named after \cite{Brunner1985} were designed and analyzed rigorously to improve the order of accuracy \cite{Stynes2017,LiaoLZ2018,Kopteva2020}.

On the other hand, convolution quadrature (CQ) based on linear multistep methods was proposed and analyzed in the pioneering work \cite{Lubich1986}. In particular, first- and second-order fractional backward differentiation formulae were used as time discretizations of fractional diffusion-wave equations \cite{LubichST1996,Cuesta2006}. To restore the order of convergence, a strategy with the help of discrete Laplace transform was proposed in \cite{LubichST1996} by choosing proper weight coefficients with respect to the source term and initial values in the discrete schemes. Furthermore, this approach was applied in \cite{JinLZ2017} to develop proper corrected schemes based on fractional $k$-step BDFs for approximating both sub-diffusion and fractional diffusion-wave equations in time. It is proved that the $k$th-order convergence rate can be achieved if the source terms possess sufficient regularity in time.

An alternative approach by correction was applied in \cite{Cuesta2006} to overcome the order reduction. This idea of construction may originate from \cite{Lubich1988,Lubich2004}, which interpreted convolutions with non-integrable kernels as equivalent Hadmard-finite integrals \cite{ELLIOTT1993}. It was also used in \cite{JLZ2016}, where CQ methods based on backward Euler (BE) and second-order backward difference (SBD) were revisited and developed for time discretizations of both sub-diffusion and diffusion-wave equations. In addition, the numerical results in \cite[Table 5]{JLZ2016} showed a second-order scheme in solving the sub-diffusion equation with certain continuous source term in time, while the mechanism behind was unknown yet.

To the best of our knowledge, most existing time-stepping schemes for equation \eqref{eq:tfdesub0} are limited to the source term possessing certain degree of smoothness at time $t=0$. For instance, error estimates for corrected BE and SBD schemes in \cite{JLZ2016,JinLZ2017} were established under the conditions $\|f(0)\|<\infty, \int_0^t(t-s)^{\alpha-1}\|f'(s)\|\mathrm{d}s<\infty$ and $\|f(0)\|<\infty, \|f'(0)\|<\infty, \int_0^t(t-s)^{\alpha-1}\|f''(s)\|\mathrm{d}s<\infty$, respectively. For corrected high-order schemes in \cite{JinLZ2017}, additional regularity conditions on source terms were required to restore high-order accuracy. However, those smoothness conditions are not suitable for singular source terms considered in this paper, such as $f(x,t)=t^\mu g(x)$ with $-1<\mu<0$. Thus the convergence orders of the existing time-stepping schemes are completely lost and far below one.

As indicated above, singular source terms will make the problem much more difficult and challenging both in designing efficient time-stepping schemes and establishing error estimates.
In this paper, based on the previous works \cite{JLZ2016,Cuesta2006,Lubich1988,Lubich2004}, we dedicate to designing efficient numerical schemes to solve \eqref{eq:tfdesub0}-\eqref{eq:ibconditions} with the singular source terms $f(x,t)\in L^{1}(0,T;L^{2}(\Omega))$ satisfying Assumption \ref{af} as well as nonsmooth initial values. First, we investigate the existence and uniqueness of weak solutions of the problem in the space $C((0,T]; L^{2}(\Omega))$.
Furthermore, by the works on FEM and lumped mass FEM in \cite{ChatzipantelidisLT:2012,JinLZ:2013,JinLPZ:2015,LubichST1996,Thomee:2006}, we applied these two spatial discretization methods for the problem \eqref{eq:tfdesub0}-\eqref{eq:ibconditions}. After providing a possible understanding on the efficiency of the approach in \cite{Lubich1988,Lubich2004}, we propose two new time-stepping schemes \eqref{eq:GLBE} and \eqref{eq:FBDF22}, named by GLBE and FBDF22 schemes, respectively.

Our main contribution consists of the following aspects.
\begin{itemize}
  \item[-] The well-posedness and regularity of solutions of \eqref{eq:tfdesub0}-\eqref{eq:ibconditions} with the singular source term $f(x,t)$ satisfying Assumption \ref{af} are investigated. Error estimates of semidiscrete continuous piecewise linear FEM and lumped mass FEM are rigorously established.
  \item[-] For $f(x,t)$ satisfying Assumption \ref{af}, two new time-stepping schemes, named by GLBE \eqref{eq:GLBE} and FBDF22 \eqref{eq:FBDF22}, are proposed. Error estimates for the first-order and second-order accurate schemes are rigorously established, respectively.
      This largely improves the convergence order of the previous time-stepping schemes in solving \eqref{eq:tfdesub0}-\eqref{eq:ibconditions} with singular source terms.
  \item[-] Due to the singularity of the source term in time, the discrete Laplace transform technique commonly used in existing works can not be employed for analyzing the fully discrete schemes proposed in this paper. Then we develop a new analysis technique based on the Laplace transform of the source term rather than its generating function to estimate the errors of the proposed schemes \eqref{eq:GLBE} and \eqref{eq:FBDF22}.
\end{itemize}

The rest of this paper is organized as follows. In Section \ref{sec:2}, we study the existence and uniqueness of the weak solutions in certain proper space for the problem \eqref{eq:tfdesub0}-\eqref{eq:ibconditions} with singular source terms  with respect to time.
In Section \ref{sec:3}, the continuous piecewise linear FEM and lumped mass FEM are used for spatial discretization. Semidiscrete error estimates of both methods are established. In Section \ref{sec:4}, two new time-stepping schemes based on first- and second-order CQ-BDFs were proposed for temporal discretization. Error estimates for fully discrete solutions are rigorously established. Section \ref{sec:5} presents several numerical examples to verify the theoretical convergence rates in both spatial and temporal directions as estimated in Sections \ref{sec:3} and \ref{sec:4}.

\section{Well-posedness and regularity of solutions} \label{sec:2}
As discussed in \cite{Bajlekova:2001,SakamotoY:2011}, the well-posedness and regularity of problem \eqref{eq:tfdesub0}-\eqref{eq:ibconditions} have been well established for $f(x,t)\in L^{p}(0,T;L^{2}(\Omega))$ with $p>1$.
In this section, we will revisit the problem and investigate the existence, uniqueness and regularity of its solution with a singular source term satisfying Assumption \ref{af}.

\begin{assumption}\label{af}
  The singular source term $f(x,t)$ in \eqref{eq:tfdesub0} is assumed to be in $L^1(0,T;L^{2}(\Omega))$ such that its Laplace transform with respect to time $t$ is analytic within the domain $\Sigma_{\theta}$ given by \eqref{eq:sigt} and satisfies $\|\hat{f}(s)\|\le c|s|^{-\mu-1}$ for $-1<\mu<0$.
\end{assumption}

For instance, the singular source term $f(x,t)=t^{\mu}g(x)$ with $-1<\mu<0$ and $g(x)\in L^2(\Omega)$ satisfies the conditions in Assumption \ref{af}.

The Laplace operator $\Delta$ is symmetric, then satisfies the following resolvent estimate \cite{Thomee:2006}
\begin{equation}\label{eq:resolv}
  \|\big(s-\Delta\big)^{-1}\|\le M|s|^{-1},\quad \forall~s\in\Sigma_{\theta}
\end{equation}
for $\theta\in (\pi/2,\pi)$, where $\Sigma_{\theta}$ is a sector of the
complex plane $\mathbb{C}$ given by
\begin{equation}\label{eq:sigt}
  \Sigma_{\theta}=\big\{z\in\mathbb{C}\setminus\{0\}:|\mathrm{arg}z|<\theta\big\}.
\end{equation}

First, we define a weak solution of the problem \eqref{eq:tfdesub0}-\eqref{eq:ibconditions} analogous to that in \cite[Chapter 7.1.1]{Evans:2010}. Throughout this paper, the notation $(\cdot,\cdot)$ denotes the inner product in $L^{2}(\Omega)$.
\begin{definition}\label{def:ws}
  For $0<\alpha<1$, we call a function $u\in L^{1}(0, T; H_{0}^{1}(\Omega))$ with ${^C}D^{\alpha}_tu\in L^{1}(0, T; H^{-1}(\Omega))$ a weak solution of \eqref{eq:tfdesub0}-\eqref{eq:ibconditions} provided that
  \begin{equation}\label{eq:vf}
    \begin{aligned}
      \langle{^C}D^{\alpha}_tu,\varphi\rangle+(\nabla u,\nabla\varphi)&=(f,\varphi),\quad\forall~\varphi\in H_{0}^{1}(\Omega),\\
      u(x,0)&=u^0(x)
    \end{aligned}
  \end{equation}
  holds for a.e. $t\in (0,T]$.
\end{definition}

A weak solution defined by Definition \ref{def:ws} satisfies \eqref{eq:tfdesub0} for a.e. $t\in (0,T]$. Note that the operator $-\Delta$ is symmetric, and its eigenvalues and the corresponding eigenfunctions on the domain $\Omega$ with a homogeneous Dirichlet boundary condition are denoted by $\{\lambda_{k}\}_{k=1}^{+\infty}$ and $\{\varphi_{k}\}_{k=1}^{+\infty}$, where $0<\lambda_{1}\leq \lambda_{2}\leq\cdots$, and $-\Delta\varphi_{k}=\lambda_{k}\varphi_{k}$ in $\Omega$ and $\varphi_{k}=0$ on $\partial\Omega$. Then $\{\varphi_{k}\}_{k=1}^{+\infty}$ makes up an orthonormal basis in $L^{2}(\Omega)$.
If $u(x,t)$ satisfies Definition \ref{def:ws}, then by substituting
$u(x,t)=\sum_{k=1}^{+\infty}u_{k}(t)\varphi_{k}(x)$ into the variational form \eqref{eq:vf} and taking $\varphi=\varphi_{k}, k=1,2,\cdots$, we may obtain ${^C}D_t^{\alpha}u_{k}(t)+\lambda_{k}u_{k}(t)=(f, \varphi_{k})$. Multiplying it by $\varphi_{k}(x)$ and summing over all $k=1,2,\cdots$ yields the equation \eqref{eq:tfdesub0} for a.e. $t\in (0,T]$.

The existence, uniqueness and regularity of the solution to the problem \eqref{eq:tfdesub0}-\eqref{eq:ibconditions} are stated in the following theorems.
\begin{theorem}\label{thm:wpr}
  Let $u^{0}(x)\equiv0$ and $f(x,t)$ in \eqref{eq:tfdesub0} satisfy Assumption \ref{af}. Then the problem \eqref{eq:tfdesub0}-\eqref{eq:ibconditions} has a unique solution $u\in C((0, T]; L^{2}(\Omega))$, which satisfies
  \begin{equation}\label{eq:2.3}
    \|u(t)\|\leq ct^{\alpha+\mu}, \quad -1<\mu<0,~~t>0.
  \end{equation}
\end{theorem}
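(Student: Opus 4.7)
The plan is to construct the solution explicitly via Laplace transform in time. With $u^0\equiv 0$, taking the Laplace transform of \eqref{eq:tfdesub0} formally gives $(s^\alpha-\Delta)\hat u(s)=\hat f(s)$, and the resolvent estimate \eqref{eq:resolv} together with Assumption \ref{af} ensures that $\hat u(s)=(s^\alpha-\Delta)^{-1}\hat f(s)$ is a well-defined $L^2(\Omega)$-valued analytic function on $\Sigma_\theta$. I would therefore \emph{define}
\begin{equation*}
u(t):=\frac{1}{2\pi i}\int_{\Gamma} e^{st}(s^\alpha-\Delta)^{-1}\hat f(s)\,ds,
\end{equation*}
where $\Gamma=\Gamma_{\theta,1/t}$ is the Hankel contour consisting of the two rays $\{re^{\pm i\theta}:r\ge 1/t\}$ joined by the circular arc $\{t^{-1}e^{i\phi}:|\phi|\le\theta\}$, for some fixed $\theta\in(\pi/2,\pi)$.

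The decisive step is the estimate of this integral. Since $0<\alpha<1$, for $s\in\Sigma_\theta$ one has $|\arg s^\alpha|=\alpha|\arg s|\le\alpha\theta<\theta$, so $s^\alpha\in\Sigma_\theta$ and \eqref{eq:resolv} applied with $s^\alpha$ yields $\|(s^\alpha-\Delta)^{-1}\|\le M|s|^{-\alpha}$. Combining with Assumption \ref{af} gives the pointwise bound $\|\hat u(s)\|\le cM|s|^{-\alpha-\mu-1}$. On the rays $s=re^{\pm i\theta}$ the substitution $\tau=rt$ converts the contribution into $ct^{\alpha+\mu}\int_1^\infty e^{\tau\cos\theta}\tau^{-\alpha-\mu-1}d\tau$, finite because $\cos\theta<0$ forces exponential decay. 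On the arc $|s|=1/t$ and $|ds|=d\phi/t$, giving a contribution bounded by $ct^{\alpha+\mu}\int_{-\theta}^{\theta}e^{\cos\phi}d\phi$. Adding the two pieces produces precisely \eqref{eq:2.3}.

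Next I would verify that the formula above satisfies Definition \ref{def:ws}. Continuity $u\in C((0,T];L^2(\Omega))$ follows by dominated convergence after deforming $\Gamma$ to a $t$-independent Hankel contour on any compact subinterval of $(0,T]$. For consistency with \eqref{eq:vf}, I would use $\mathcal{L}\{{^C}D^\alpha_t u\}(s)=s^\alpha\hat u(s)$ (valid because $u(0)=0$) together with $(s^\alpha-\Delta)\hat u(s)=\hat f(s)$; the extra factor $|s|^{-\alpha}$ in the resolvent is just enough to make $\Delta u(t)\in L^2(\Omega)$ pointwise in $t>0$, so the inversion reproduces \eqref{eq:tfdesub0} a.e. For uniqueness, projecting any weak solution onto the eigenbasis $\{\varphi_k\}$ reduces the problem to scalar Caputo ODEs ${^C}D^\alpha_t u_k+\lambda_k u_k=(f,\varphi_k)$ with $u_k(0)=0$, each of which has the unique Mittag-Leffler convolution representation.

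The main obstacle is the low regularity allowed by Assumption \ref{af}: because $-1<\mu<0$, the bound on $\hat f$ decays slower than $|s|^{-1}$ at infinity and is singular at $s=0$, so a fixed vertical Bromwich contour is not usable; the $t$-dependent Hankel radius $|s|=1/t$ is precisely what renders the representation convergent and, after rescaling, delivers the exponent $\alpha+\mu$ in \eqref{eq:2.3}. Since $\alpha+\mu$ may be negative, \eqref{eq:2.3} genuinely allows $\|u(t)\|\to\infty$ as $t\to 0^+$, which is consistent with $u\in C((0,T];L^2(\Omega))$ but not with continuity up to $t=0$. Making the integral manipulations, contour deformations, and the exchange of $(s^\alpha-\Delta)^{-1}$ with ${^C}D^\alpha_t$ rigorous in this singular regime is the delicate technical part of the argument.
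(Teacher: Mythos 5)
Your proposal is correct and follows essentially the same route as the paper: represent $u$ by the inverse Laplace transform of $(s^\alpha-\Delta)^{-1}\hat f(s)$, deform to a Hankel-type contour with radius $\varepsilon=t^{-1}$, and combine the resolvent bound $\|(s^\alpha-\Delta)^{-1}\|\le M|s|^{-\alpha}$ with $\|\hat f(s)\|\le c|s|^{-\mu-1}$ to obtain the $t^{\alpha+\mu}$ estimate. The only (inessential) differences are that the paper proves continuity by an explicit bound $\|\tilde u(t_1)-\tilde u(t_2)\|\le c|t_1-t_2|\max\{t_1^{\alpha+\mu-1},t_2^{\alpha+\mu-1}\}$ rather than dominated convergence, and establishes uniqueness by matching Laplace transforms rather than by eigenfunction expansion.
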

\begin{proof}
  Taking Laplace transform on \eqref{eq:tfdesub0} with $u^{0}(x)\equiv0$ arrives at $\hat{u}(x,s)=(s^{\alpha}-\Delta)^{-1}\hat{f}(x,s)$.
  Let $\tilde{u}(x,t)$ denote the inverse Laplace transform of $\hat{u}(x,s)$, then it follows that
  \begin{equation}\label{eq:tildeu}
    \tilde{u}(x,t)=\frac{1}{2\pi i}\int_{\Gamma}e^{s t}(s^{\alpha}-\Delta)^{-1}\hat{f}(x,s)\mathrm{d}s,
  \end{equation}
  where
  \begin{equation}\label{gammac}
    \Gamma=\{\sigma+iy:\sigma>0,~ y\in\mathbb{R}\}.
  \end{equation}
  By the resolvent estimate in \eqref{eq:resolv} and the Cauchy's theorem, $\Gamma$ in \eqref{eq:tildeu} can be replaced by $\Gamma_{\varepsilon}^{\theta}\cup S_{\varepsilon}$, where $\theta\in (\pi/2,\pi)$ and
  \begin{equation}\label{eq:gammavts}
    \Gamma_{\varepsilon}^{\theta}\cup S_{\varepsilon}=\{\rho e^{\pm i\theta}:~ \rho\geq\varepsilon\}\cup\{\varepsilon e^{ix}: -\theta\leq x\leq \theta\}.
  \end{equation}

  Due to the condition in Assumption \ref{af}, it holds that $\|\hat{f}(x,s)\|\le c|s|^{-\mu-1}$ with $-1<\mu<0$.
  Let $\varepsilon=t^{-1}$ in \eqref{eq:gammavts}, then an estimate for $\tilde{u}(x,t)$ in \eqref{eq:tildeu} can be obtained as follows
  \begin{equation}\label{eq:tildeunorm}
    \begin{aligned}
      \|\tilde{u}(x,t)\|&\leq c\int_{\Gamma_{\varepsilon}^{\theta}\cup S_{\varepsilon}}|e^{st}|\|(s^{\alpha}-\Delta)^{-1}\|\|\hat{f}(x,s)\||\mathrm{d}s| \\
        & \leq c\int_{\Gamma_{\varepsilon}^{\theta}\cup S_{\varepsilon}}|e^{st}||s|^{-\alpha-\mu-1}|\mathrm{d}s|   \\
        & \leq c\left(\int_{\varepsilon}^{+\infty}e^{\rho t\cos\theta}\rho^{-\alpha-\mu-1}\mathrm{d}\rho+\varepsilon^{-\alpha-\mu}
                  \int_{-\theta}^{\theta}e^{\varepsilon t\cos\xi}\mathrm{d}\xi\right) \\
        & \leq ct^{\alpha+\mu}.
    \end{aligned}
  \end{equation}

  Next we prove that $\tilde{u}$ is continuous with respect to $t\in(0,T]$. For any $t_1,t_2>0$, using \eqref{eq:tildeu}, we have
  \begin{equation}\label{eq:conut}
    \begin{split}
      \|\tilde{u}(x,t_1)-\tilde{u}(x,t_2)\|&\le c\Big\|\int_{\Gamma_{\varepsilon}^{\theta}\cup S_{\varepsilon}}\left(e^{st_1}-e^{s t_2}\right)(s^{\alpha}-\Delta)^{-1}\hat{f}(x,s)\mathrm{d}s\Big\|  \\
        &\leq c\int_{\Gamma_{\varepsilon}^{\theta}\cup S_{\varepsilon}}|e^{st_1}-e^{st_2}||s|^{-\alpha-\mu-1}|\mathrm{d}s|\\
        &\leq c|t_1-t_2|\max\{t_1^{\alpha+\mu-1},t_2^{\alpha+\mu-1}\},
    \end{split}
  \end{equation}
  where the last inequality holds by the result
  \begin{equation*}
    |e^{st_1}-e^{s t_2}|=\Big|\int_{t_2}^{t_1}se^{sz}\mathrm{d}z\Big|\leq\Big|
    \int_{t_2}^{t_1}|s||e^{sz}|\mathrm{d}z\Big|\leq m|s||t_1-t_2|
  \end{equation*}
  with $m=\max\{|e^{st_1}|,|e^{st_2}|\}$. The estimate \eqref{eq:conut} implies the continuity of $\tilde{u}(x,t)$ with respect to $t>0$.

  Recall that any weak solution $u$ defined by \eqref{eq:vf} satisfies \eqref{eq:tfdesub0} and \eqref{eq:ibconditions} for a.e. $t\in (0,T]$, then it has the same Laplace transform as that of $\tilde{u}$, which implies $u=\tilde{u}$ for a.e. $t\in (0,T]$. Therefore, the problem \eqref{eq:tfdesub0}-\eqref{eq:ibconditions} has a unique weak solution in the space $C((0,T]; L^{2}(\Omega))$, and the result \eqref{eq:2.3} follows from \eqref{eq:tildeunorm}.
\end{proof}

For the case $u^{0}\in L^{2}(\Omega)$ and $f(x,t)\equiv0$, the corresponding result can be obtained analogous to that in \cite{SakamotoY:2011}.
\begin{theorem}[\cite{SakamotoY:2011}]
  Let $u^{0}\in L^{2}(\Omega)$ and $f\equiv0$ in the problem \eqref{eq:tfdesub0}-\eqref{eq:ibconditions}. Then there exists a unique weak solution $u\in C([0, T]; L^{2}(\Omega))$ such that
  \begin{equation}
    \max_{0\leq t\leq T}\|u(t)\|\leq c\|u^{0}\|.
  \end{equation}
\end{theorem}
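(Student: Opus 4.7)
The plan is to parallel the Laplace-transform argument of Theorem 2.1, now adapted to the case of nontrivial initial datum and vanishing source. With $f\equiv 0$, applying the Laplace transform to \eqref{eq:tfdesub0} yields $\hat{u}(x,s)=s^{\alpha-1}(s^{\alpha}-\Delta)^{-1}u^{0}(x)$, and deforming the Bromwich contour $\Gamma$ in \eqref{gammac} onto $\Gamma_{\varepsilon}^{\theta}\cup S_{\varepsilon}$ produces the representation
\[
u(x,t)=\frac{1}{2\pi i}\int_{\Gamma_{\varepsilon}^{\theta}\cup S_{\varepsilon}} e^{st}\,s^{\alpha-1}(s^{\alpha}-\Delta)^{-1}u^{0}(x)\,\mathrm{d}s.
\]

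For the stability bound I would choose $\theta\in(\pi/2,\pi)$ close enough to $\pi/2$ so that $s^{\alpha}\in\Sigma_{\theta}$ whenever $s\in\Sigma_{\theta}$, which enables the resolvent estimate \eqref{eq:resolv} in the form $\|(s^{\alpha}-\Delta)^{-1}\|\le M|s|^{-\alpha}$. Setting $\varepsilon=1/t$ and mimicking the splittings in \eqref{eq:tildeunorm}, the ray contribution reduces after the substitution $\rho\leftarrow\rho t$ to $\int_{1}^{\infty}e^{\rho\cos\theta}\rho^{-1}\mathrm{d}\rho$, which is finite because $\cos\theta<0$, while the arc contribution is bounded by a constant independent of $t$. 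Combining these gives $\|u(t)\|\le c\|u^{0}\|$ uniformly for $t>0$, and continuity on $(0,T]$ follows by replaying the increment argument \eqref{eq:conut} with the modified integrand (note that $s^{\alpha-1}(s^{\alpha}-\Delta)^{-1}u^{0}$ is again $O(|s|^{-1})\|u^{0}\|$, so no $t$-dependent negative power appears on the right).

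The main obstacle is continuity at $t=0^{+}$, which is not directly visible from the contour integral because $\varepsilon=1/t$ becomes singular. To handle this I would switch to the spectral representation $u(x,t)=\sum_{k\ge 1}E_{\alpha}(-\lambda_{k}t^{\alpha})(u^{0},\varphi_{k})\varphi_{k}(x)$, obtained by projecting \eqref{eq:vf} with $f\equiv 0$ onto each eigenmode $\varphi_{k}$ and solving the scalar fractional ODE ${^{C}}D^{\alpha}_{t}u_{k}+\lambda_{k}u_{k}=0$. The standard uniform bound $0\le E_{\alpha}(-\lambda_{k}t^{\alpha})\le 1$ (which can itself be deduced from the same contour/resolvent estimate applied to a single mode) combined with Parseval's identity reproduces $\|u(t)\|^{2}\le\|u^{0}\|^{2}$, and dominated convergence against the summable majorant $\sum_{k}|(u^{0},\varphi_{k})|^{2}=\|u^{0}\|^{2}$ delivers $u(t)\to u^{0}$ in $L^{2}(\Omega)$ as $t\to 0^{+}$, upgrading continuity to the closed interval $[0,T]$. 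Uniqueness is immediate: the difference of two weak solutions has vanishing Laplace transform and hence vanishes a.e.\ on $(0,T]$. Verifying that the constructed series actually satisfies Definition \ref{def:ws} is then a routine dominated-convergence check, using the uniform Mittag--Leffler bound to exchange summation with the duality pairing and with ${^{C}}D^{\alpha}_{t}$.
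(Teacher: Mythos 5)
The paper offers no proof of this theorem: it is quoted verbatim from Sakamoto--Yamamoto \cite{SakamotoY:2011} with only the remark that it "can be obtained analogous to" that reference, so there is no in-paper argument to compare against. Your reconstruction is essentially correct and is a sensible hybrid of the two available techniques: the contour representation $u(t)=\frac{1}{2\pi i}\int_{\Gamma_{\varepsilon}^{\theta}\cup S_{\varepsilon}}e^{st}s^{\alpha-1}(s^{\alpha}-\Delta)^{-1}u^{0}\,\mathrm{d}s$ with $\varepsilon=t^{-1}$ delivers the uniform bound $\|u(t)\|\le c\|u^{0}\|$ and continuity on $(0,T]$ exactly as in the paper's Theorem \ref{thm:wpr} (and mirrors the semidiscrete formula \eqref{eq:uh}), while the eigenfunction expansion $u(t)=\sum_{k}E_{\alpha}(-\lambda_{k}t^{\alpha})(u^{0},\varphi_{k})\varphi_{k}$ with a uniform Mittag--Leffler bound and dominated convergence is precisely the mechanism of \cite{SakamotoY:2011} for continuity at $t=0^{+}$; uniqueness via the Laplace transform matches the paper's own uniqueness step. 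Note that for the dominated-convergence argument you only need $|E_{\alpha}(-\lambda_{k}t^{\alpha})|\le C$ uniformly (which does follow from the single-mode contour estimate) together with pointwise convergence to $1$, so you do not actually need the sharper bound $0\le E_{\alpha}\le 1$ (complete monotonicity).

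Two small caveats. First, your parenthetical claim that "no $t$-dependent negative power appears on the right" in the increment estimate is inaccurate: with the integrand only $O(|s|^{-1})$, replaying \eqref{eq:conut} gives $\|u(t_{1})-u(t_{2})\|\le c|t_{1}-t_{2}|\max\{t_{1}^{-1},t_{2}^{-1}\}$, which still yields continuity on $(0,T]$ but degenerates at the origin --- consistent with your (correct) decision to treat $t=0^{+}$ separately, so nothing breaks. Second, the step you dismiss as routine --- verifying that the series is a weak solution in the sense of Definition \ref{def:ws}, i.e. $u\in L^{1}(0,T;H_{0}^{1}(\Omega))$ and ${^C}D^{\alpha}_tu\in L^{1}(0,T;H^{-1}(\Omega))$ --- is the one place where genuine work remains: it requires the smoothing estimates $\|\nabla u(t)\|\le ct^{-\alpha/2}\|u^{0}\|$ (from $E_{\alpha}(-x)\le C/(1+x)$, equivalently from the resolvent calculus) and an integrable bound on ${^C}D^{\alpha}_tu=\Delta u$ near $t=0$. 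These are standard but should be stated explicitly, since integrability near the origin is exactly where the hypothesis $u^{0}\in L^{2}(\Omega)$ rather than $u^{0}\in H_{0}^{1}(\Omega)$ matters.
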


\section{Spatially semidiscrete FEM}\label{sec:3}
In this section, we establish error estimates for the semidiscrete Galerkin FEM and lumped mass FEM for the sub-diffusion equation \eqref{eq:tfdesub0}-\eqref{eq:ibconditions} with a singular source term $f(x,t)$ satisfying Assumption \ref{af}.
\subsection{Galerkin FEM}\label{subsec:3.1}
Let $\mathcal{T}_h$ be a regular triangulation of $\Omega$ into $d$-simplexes and $h=\max\limits_{T\in\mathcal{T}_h}\mathrm{diam}(T)$ the maximal diameter, then we
denote $X_{h}\subset H_0^1(\Omega)$ as a continuous piecewise linear finite element space on $\mathcal{T}_h$. The semidiscrete problem by finite element for \eqref{eq:tfdesub0}-\eqref{eq:ibconditions} is to find $u_{h}(t)\in X_{h}$ satisfying
\begin{equation}\label{eq:sFE}
  \begin{aligned}
    ({^C}D^{\alpha}_tu_{h}(t),\varphi)+(\nabla u_{h}(t),\nabla\varphi)&=(f(t),\varphi),\quad \forall~ \varphi\in X_{h},\\
    u_h(0)&=P_hu^0,
  \end{aligned}
\end{equation}
where the operator $P_{h}:L^{2}(\Omega)\to X_{h}$ denotes the $L^{2}$-projection onto the finite element space $X_{h}$, defined by
$$(P_{h}\varphi, \psi)=(\varphi, \psi), ~\forall~\psi\in X_{h}.$$
We further introduce the operator $\Delta_{h}: X_{h}\rightarrow X_{h}$ defined by \begin{equation}\label{eq:Deltah}
  -(\Delta_{h}\varphi, \psi)=(\nabla\varphi,\nabla\psi),~\forall~\varphi, \psi\in X_{h}.
\end{equation}
Then the semidiscrete form of \eqref{eq:sFE} can be rewritten in the form of
\begin{equation}\label{eq:sFEO}
  {^C}D^{\alpha}_tu_{h}(t)-\Delta_{h} u_{h}(t)=f_{h}(t),~\forall~t>0,
\end{equation}
with $u_{h}(0)=P_{h}u^{0}$ and $f_{h}=P_{h}f$.
The Laplace transform on \eqref{eq:sFEO} implies
\begin{equation}\label{eq:LTsFEO}
  s^{\alpha}\hat{u}_{h}(s)-\Delta_{h}\hat{u}_{h}(s)=s^{\alpha-1}P_{h}u^{0}+\hat{f}_{h}(s).
\end{equation}
Then by the inverse Laplace transform together with the estimate $\|(s^{\alpha} -\Delta_{h})^{-1}\|\leq M|s|^{-\alpha}$ for $s\in\Sigma_{\theta}$ with $\theta\in(\pi/2,\pi)$ \cite{Thomee:2006}, the solution $u_{h}(t)$ for $t>0$ can be represented by
\begin{equation}\label{eq:uh}
  u_{h}(t)=\frac{1}{2\pi i}\int_{\Gamma_{\varepsilon}^{\theta}\cup S_{\varepsilon}}
  e^{st}(s^{\alpha}-\Delta_{h})^{-1}\left(s^{\alpha-1}P_{h}u^{0}
  +\hat{f}_{h}(s)\right)\mathrm{d}s.
\end{equation}

Next we establish the error estimate of the semidiscrete scheme \eqref{eq:sFE} with homogeneous initial data and the singular source term $f$ satisfying Assumption \ref{af}.
\begin{theorem}\label{thm:errsFE}
  Assume that $u^0(x)\equiv0$ and $f(x,t)$ in \eqref{eq:tfdesub0} satisfies Assumption \ref{af}. Let $u$ and $u_h$ be the solutions to \eqref{eq:vf} and \eqref{eq:sFE}, respectively. Then we have
  \begin{equation}\label{eq:errsFEM0}
    \|u(t)-u_h(t)\|\le ct^{\mu}h^2,\quad -1<\mu<0,~~ t>0.
  \end{equation}
\end{theorem}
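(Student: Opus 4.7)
My plan is to follow the Laplace-transform route that was set up for the continuous problem in Theorem \ref{thm:wpr}. Since $u^0\equiv 0$, that theorem's formula for $u$ combined with the semidiscrete formula \eqref{eq:uh} yields
\begin{equation*}
u(t)-u_h(t)=\frac{1}{2\pi i}\int_{\Gamma_\varepsilon^\theta\cup S_\varepsilon} e^{st}\bigl[(s^\alpha-\Delta)^{-1}-(s^\alpha-\Delta_h)^{-1}P_h\bigr]\hat f(s)\,\mathrm{d}s,
\end{equation*}
so the task reduces to bounding the bracketed operator difference applied to $\hat f(s)$, pointwise in $s$, and then integrating against $|e^{st}|$ along the deformed contour already used in \eqref{eq:tildeunorm}.

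The crucial input is a uniform-in-$s$ resolvent error estimate at the $L^2$-to-$L^2$ level, namely
\begin{equation*}
\bigl\|\bigl[(s^\alpha-\Delta)^{-1}-(s^\alpha-\Delta_h)^{-1}P_h\bigr]v\bigr\|\le c\,h^2\,\|v\|,\qquad v\in L^2(\Omega),\ s\in\Sigma_\theta,
\end{equation*}
with $c$ independent of $h$ and $s$. I would establish this by setting $w=(s^\alpha-\Delta)^{-1}v$ and $w_h=(s^\alpha-\Delta_h)^{-1}P_hv$, using the Galerkin orthogonality $s^\alpha(w-w_h,\chi)+(\nabla(w-w_h),\nabla\chi)=0$ for $\chi\in X_h$, splitting $w-w_h$ via the Ritz projection, and combining the uniform $H^2$-regularity bound $|s^\alpha|\,\|w\|+\|w\|_{H^2}\le c\|v\|$ (valid because $s^\alpha$ stays in a sector that misses the spectrum of $-\Delta$) with an Aubin--Nitsche duality argument to recover the extra power of $h$. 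This is precisely the mechanism behind the analogous resolvent analyses in \cite{Thomee:2006,JinLZ:2013}.

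Granting that bound, Assumption \ref{af} gives $\|\hat f(s)\|\le c|s|^{-\mu-1}$, so
\begin{equation*}
\|u(t)-u_h(t)\|\le c\,h^2\int_{\Gamma_\varepsilon^\theta\cup S_\varepsilon}|e^{st}|\,|s|^{-\mu-1}\,|\mathrm{d}s|,
\end{equation*}
and choosing $\varepsilon=t^{-1}$ and handling the two rays and the circular arc exactly as in \eqref{eq:tildeunorm}---but with weight $|s|^{-\mu-1}$ in place of $|s|^{-\alpha-\mu-1}$---produces a factor of $t^\mu$, giving \eqref{eq:errsFEM0}. The one real obstacle is the uniform-in-$s$ $O(h^2)$ resolvent error: for large $|s|$ the operator $s^\alpha-\Delta$ is dominated by its zeroth-order term, and a plain energy estimate only yields $O(h)$, so obtaining the extra $h$ uniformly on $\Sigma_\theta$ genuinely requires the duality step together with the full elliptic regularity of $(s^\alpha-\Delta)^{-1}$. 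Once that lemma is in place, the rest is a routine adaptation of the contour estimate performed in the proof of Theorem \ref{thm:wpr}.
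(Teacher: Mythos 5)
Your argument is correct and follows essentially the same route as the paper: the error is written as a contour integral of $e^{st}\hat G_h(s)\hat f(s)$ with $\hat G_h(s)=(s^\alpha-\Delta)^{-1}-(s^\alpha-\Delta_h)^{-1}P_h$, the uniform bound $\|\hat G_h(s)\|\le ch^2$ on $\Sigma_\theta$ is invoked (the paper cites it from \cite{LubichST1996}, whereas you sketch its standard Ritz-projection/duality proof), and the contour estimate with $\varepsilon=t^{-1}$ then yields the factor $t^\mu h^2$ exactly as in \eqref{eq:errsFEM1}.
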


The proof of Theorem \ref{thm:errsFE} is presented in Appendix \ref{sec:app1}.
For the case $u^0(x)\in L^2(\Omega)$ and $f(x,t)\equiv0$, the corresponding result estimated in \cite{JinLZ:2013,JinLZ:2019} is $\|u(t)-u_h(t)\|\le ch^2|\log h|t^{-\alpha}\|u^0\|$, which can be improved to be $\|u(t)-u_h(t)\|\le ch^2t^{-\alpha}\|u^0\|$ removing $|\log h|$ as mentioned in \cite{JinLZ:2019}. The improved result can be obtained by using the similar argument in the proof of Theorem \ref{thm:errsFE}.
\begin{theorem}\label{thm:errsFE1}
  Assume $u^0(x)\in L^2(\Omega)$ and $f(x,t)\equiv0$ in \eqref{eq:tfdesub0}-\eqref{eq:ibconditions}. Let $u$ and $u_h$ be the solutions to \eqref{eq:vf} and \eqref{eq:sFE}, respectively. Then we have
  \begin{equation}\label{eq:errsFEM2}
    \|u(t)-u_h(t)\|\le ct^{-\alpha}\|u^0\|h^2,\quad t>0.
  \end{equation}
\end{theorem}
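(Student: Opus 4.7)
The plan is to mimic the contour integral argument used in the proof of Theorem \ref{thm:errsFE} from Appendix \ref{sec:app1}, with the source-term contribution $\hat{f}_h(s)$ replaced by the initial-data contribution $s^{\alpha-1} P_h u^0$. The overall structure is: Laplace representation, a resolvent error bound uniform in $s\in\Sigma_\theta$, and a deformed-contour estimate with $\varepsilon=t^{-1}$.

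First, I would write the representations. Since $f\equiv 0$, taking Laplace transform of \eqref{eq:tfdesub0}--\eqref{eq:ibconditions} gives $\hat{u}(s)=s^{\alpha-1}(s^\alpha-\Delta)^{-1}u^0$, while \eqref{eq:LTsFEO} gives $\hat{u}_h(s)=s^{\alpha-1}(s^\alpha-\Delta_h)^{-1}P_h u^0$. Both resolvents are analytic on $\Sigma_\theta$ with $\theta\in(\pi/2,\pi)$ by \eqref{eq:resolv} and its discrete analog, so the inversion contour can be deformed to $\Gamma_\varepsilon^\theta\cup S_\varepsilon$ (choosing $\varepsilon=t^{-1}$ as in Theorem \ref{thm:wpr}), yielding
$$u(t)-u_h(t)=\frac{1}{2\pi i}\int_{\Gamma_\varepsilon^\theta\cup S_\varepsilon}e^{st}\,s^{\alpha-1}\Big[(s^\alpha-\Delta)^{-1}u^0-(s^\alpha-\Delta_h)^{-1}P_h u^0\Big]\mathrm{d}s.$$

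Second, I would establish the key resolvent error estimate
$$\big\|(z-\Delta)^{-1}v-(z-\Delta_h)^{-1}P_h v\big\|\le c\,h^2\,\|v\|,\qquad z\in\Sigma_\theta,\ v\in L^2(\Omega),$$
and then invoke it with $z=s^\alpha$. This is the same ingredient that drives Theorem \ref{thm:errsFE}: introduce the Ritz projection $R_h$, split the error into a projection error and a discrete resolvent error, and close the argument by an Aubin--Nitsche duality argument together with the uniform resolvent bound \eqref{eq:resolv}. The improvement over the classical bound with a $|\log h|$ factor comes from carrying out the whole estimate in $L^2$--$L^2$ (no $L^\infty$ stability of $R_h$ is needed here), so the duality argument yields a clean $h^2$.

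Third, combining the two ingredients and estimating the contour,
$$\|u(t)-u_h(t)\|\le c\,h^2\,\|u^0\|\int_{\Gamma_\varepsilon^\theta\cup S_\varepsilon}|e^{st}||s|^{\alpha-1}|\mathrm{d}s|\le c\,h^2\,\|u^0\|\Big(\int_\varepsilon^\infty e^{\rho t\cos\theta}\rho^{\alpha-1}\mathrm{d}\rho+\varepsilon^\alpha\int_{-\theta}^\theta e^{\varepsilon t\cos\xi}\mathrm{d}\xi\Big).$$
With $\varepsilon=t^{-1}$ both terms are $\mathcal{O}(t^{-\alpha})$, which gives \eqref{eq:errsFEM2}.

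The main obstacle, as flagged in the paragraph preceding the theorem, is the elliptic resolvent comparison without the $|\log h|$: once one argues purely in $L^2$ via duality rather than routing through an $L^\infty$ Ritz-projection bound, everything else is routine contour work completely parallel to Theorem \ref{thm:errsFE}.
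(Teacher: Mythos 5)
Your proposal matches the paper's intended argument: the paper proves this by the same Laplace-transform representation $u(t)-u_h(t)=\frac{1}{2\pi i}\int_{\Gamma_\varepsilon^\theta\cup S_\varepsilon}e^{st}s^{\alpha-1}\hat G_h(s)u^0\,\mathrm{d}s$ with $\hat G_h(s)=(s^\alpha-\Delta)^{-1}-(s^\alpha-\Delta_h)^{-1}P_h$, the uniform bound $\|\hat G_h(s)\|\le Ch^2$ (cited from the argument in Lubich--Sloan--Thom\'ee rather than re-derived via duality as you sketch), and the contour estimate with $\varepsilon=t^{-1}$ giving $ct^{-\alpha}\|u^0\|h^2$. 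Your proof is correct and essentially identical in structure.
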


\subsection{Lumped mass FEM}
In this subsection, we consider the more practical lumped mass
FEM \cite{Thomee:2006} and estimate the corresponding discretization errors.
The semidiscrete problem by the lumped mass FEM for \eqref{eq:tfdesub0}-\eqref{eq:ibconditions} is to find $\bar{u}_{h}(t)\in X_{h}$ satisfying
\begin{equation}\label{eq:slmFE}
  \begin{aligned}
    ({^C}D^{\alpha}_t\bar{u}_{h}(t),\varphi)_h+(\nabla\bar{u}_{h}(t),\nabla\varphi)&=(f(t),\varphi),\quad \forall~ \varphi\in X_{h},\\
    \bar{u}_h(0)&=P_hu^0,
  \end{aligned}
\end{equation}
where $(\cdot,\cdot)_h$ is defined by
$$
  (v,w)_h:=\sum_{\tau\in\mathcal{T}_h}Q_{\tau,h}(vw) \;\quad\hbox{with}\quad \; Q_{\tau,h}(g)=\frac{|\tau|}{d+1}\sum_{j=1}^{d+1}g(x_j^\tau),
$$
and
$\{x_j^\tau\}_{j=1}^{d+1}$ are the vertices of the $d$-simplex $\tau\in\mathcal{T}_h$. Define the operator $\bar{\Delta}_{h}: X_{h}\rightarrow X_{h}$ by
\begin{equation}\label{eq:bDeltah}
  -(\bar{\Delta}_{h}\varphi, \psi)_h=(\nabla\varphi,\nabla\psi),~\forall~\varphi, \psi\in X_{h},
\end{equation}
and a projection operator $\bar{P}_h:L^2(\Omega)\rightarrow X_h$ by
$$(\bar{P}_{h}f, \psi)_h=(f,\psi),~\forall~\psi\in X_{h}.$$
Then the semidiscrete scheme \eqref{eq:slmFE} can be rewritten in the form of
\begin{equation}\label{eq:slmFEO}
  {^C}D^{\alpha}_t\bar{u}_{h}(t)-\bar{\Delta}_{h} \bar{u}_{h}(t)=\bar{f}_{h}(t),~\forall~t>0,
\end{equation}
with $\bar{u}_{h}(0)=P_{h}u^{0}$ and $\bar{f}_{h}=\bar{P}_{h}f$.

\begin{theorem}\label{thm:errslmFE}
  Assume that $u^0(x)\equiv0$ and $f(x,t)$ in \eqref{eq:tfdesub0} satisfies Assumption \ref{af}. Let $u(t)$ and $\bar{u}_h(t)$ be the solutions to \eqref{eq:vf} and \eqref{eq:slmFE}, respectively. Then we have
  \begin{equation}\label{eq:errslmFEM0}
    \|u(t)-\bar{u}_h(t)\|\le ct^{\mu}h,\quad -1<\mu<0,~~t>0.
  \end{equation}
  Moreover, if the quadrature error operator $Q_h$ defined by \eqref{eq:Qh} satisfies \eqref{eq:seQh}, then we have
  \begin{equation}\label{eq:errslmFEM1}
    \|u(t)-\bar{u}_h(t)\|\le ct^{\mu}h^2,\quad -1<\mu<0,~~t>0.
  \end{equation}
\end{theorem}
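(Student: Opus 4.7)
The plan is to reduce the problem to the Galerkin case already handled in Theorem \ref{thm:errsFE} plus a ``lumping-error'' term, and then to treat that lumping error by the same Laplace-transform/contour-integration machinery used in the proof of Theorem \ref{thm:wpr}. Specifically, I would split
\[
u(t)-\bar u_h(t) = \bigl(u(t)-u_h(t)\bigr) + \bigl(u_h(t)-\bar u_h(t)\bigr) =: e_h(t)+\eta_h(t).
\]
Theorem \ref{thm:errsFE} already gives $\|e_h(t)\|\le ct^{\mu}h^2$, which is harmless for both \eqref{eq:errslmFEM0} and \eqref{eq:errslmFEM1}, so all the work lies in bounding $\eta_h$.

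Taking Laplace transforms of \eqref{eq:sFEO} and \eqref{eq:slmFEO} (with zero initial data) and using the two elementary identities $\bar\Delta_h-\Delta_h=Q_h\bar\Delta_h$ and $P_h-\bar P_h=Q_h\bar P_h$ on $X_h$ (both immediate from the definition \eqref{eq:Qh} of the quadrature error operator, since $(\bar P_h f,\psi)_h=(f,\psi)=(P_h f,\psi)$ and $(-\bar\Delta_h\chi,\psi)_h=(-\Delta_h\chi,\psi)$), together with the resolvent identity, one arrives at
\[
\hat\eta_h(s) = -(s^{\alpha}-\Delta_h)^{-1}\,Q_h\bar\Delta_h\,(s^{\alpha}-\bar\Delta_h)^{-1}P_h\hat f(s) + (s^{\alpha}-\bar\Delta_h)^{-1}\,Q_h\bar P_h\hat f(s).
\]
I would then invert the Laplace transform along $\Gamma_\varepsilon^\theta\cup S_\varepsilon$ with $\varepsilon=t^{-1}$, exactly as in \eqref{eq:tildeu}--\eqref{eq:tildeunorm}, and bound each factor using: the two resolvent estimates $\|(s^\alpha-\Delta_h)^{-1}\|,\|(s^\alpha-\bar\Delta_h)^{-1}\|\le M|s|^{-\alpha}$ for $s\in\Sigma_\theta$; Assumption \ref{af}'s bound $\|\hat f(s)\|\le c|s|^{-\mu-1}$; uniform $L^2$-boundedness of $P_h$ and $\bar P_h$; and a bound on $Q_h$. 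For \eqref{eq:errslmFEM0} the standard crude bound on $Q_h$ (yielding one power of $h$, valid on any quasi-uniform mesh) suffices; for \eqref{eq:errslmFEM1} the sharper hypothesis \eqref{eq:seQh} gains a second power of $h$.

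The chief obstacle is the appearance of the composition $\bar\Delta_h(s^\alpha-\bar\Delta_h)^{-1}$, which naively would cost a full factor of $|s|^{\alpha}$ and destroy the $t^{\mu}$ time decay. This is neutralized by the algebraic identity $\bar\Delta_h(s^\alpha-\bar\Delta_h)^{-1} = -I + s^{\alpha}(s^\alpha-\bar\Delta_h)^{-1}$, whose right-hand side is uniformly bounded in $s\in\Sigma_\theta$. After this simplification the integrand decays like $h^{p}|s|^{-\alpha-\mu-1}$ with $p=1$ or $p=2$, and integrating over $\Gamma_\varepsilon^\theta\cup S_\varepsilon$ with $\varepsilon=t^{-1}$ reproduces the same scaling as in \eqref{eq:tildeunorm}, delivering the factor $t^{\mu}$ and hence the two claimed bounds \eqref{eq:errslmFEM0} and \eqref{eq:errslmFEM1}.
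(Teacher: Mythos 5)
Your overall strategy is the paper's: split $u-\bar u_h$ into the Galerkin error (covered by Theorem \ref{thm:errsFE}) plus the lumping error, represent the latter by a contour integral over $\Gamma_\varepsilon^\theta\cup S_\varepsilon$ with $\varepsilon=t^{-1}$, and neutralize the dangerous composition via $\bar\Delta_h(s^\alpha-\bar\Delta_h)^{-1}=-I+s^\alpha(s^\alpha-\bar\Delta_h)^{-1}$. But the specific decomposition of $\hat\eta_h$ contains a gap. First, the identities you invoke are not correct as stated: from the definition \eqref{eq:Qh} one gets $(\nabla Q_h\bar\Delta_h\chi,\nabla\psi)=((\Delta_h-\bar\Delta_h)\chi,\psi)$, i.e. $\bar\Delta_h-\Delta_h=\Delta_hQ_h\bar\Delta_h$, and likewise $P_h-\bar P_h=-\Delta_hQ_h\bar P_h$; both carry an extra factor of $\Delta_h$ that your formulas omit. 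For the resolvent-difference term this is harmless, since the stray $\Delta_h$ sits next to $(s^\alpha-\Delta_h)^{-1}$ and can be absorbed. But in the projection-mismatch term the corrected expression is $-(s^\alpha-\bar\Delta_h)^{-1}\Delta_hQ_h\bar P_h\hat f$, and here the gain of $h$ is lost: the available estimates \eqref{eq:eQh} only give $\|\Delta_hQ_h\psi\|\le C\|\psi\|$ (equivalently $\|\bar\Delta_hQ_h\psi\|\le C\|\psi\|$ for $p=0$, and the $p=1$ case costs an inverse inequality that cancels the $h$), so this term is bounded by $c|s|^{-\alpha-\mu-1}$ with \emph{no} power of $h$. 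The same obstruction is visible without Laplace transforms: $\|(P_h-\bar P_h)f\|$ is not $O(h)\|f\|$ for $f\in L^2$ only.

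The paper sidesteps this entirely by subtracting the two \emph{variational} forms \eqref{eq:sFE} and \eqref{eq:slmFE} rather than the operator forms: both have the identical right-hand side $(f,\varphi)$, which cancels exactly, and the whole lumping error is driven by the single source $-\bar\Delta_hQ_h\,{}^CD^\alpha_tu_h$ (equation \eqref{eq:eslmFE}). After Laplace transform this puts $Q_h$ directly against $s^\alpha\hat u_h$, where $\|Q_h\hat u_h(s)\|\le ch^p\|\hat u_h(s)\|\le ch^p|s|^{-\alpha-\mu-1}$ with $p=1$ from \eqref{eq:eQh} or $p=2$ from \eqref{eq:seQh}, while $(s^\alpha-\bar\Delta_h)^{-1}\bar\Delta_h$ is uniformly bounded; the contour integral then delivers $ct^\mu h^p$ exactly as you intend. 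So your plan is recoverable, but only after replacing your decomposition with the variational one; as written, the second term of your $\hat\eta_h$ does not close.
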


The proof of Theorem \ref{thm:errslmFE} is given in Appendix \ref{sec:app2}.
For $u^0(x)\in L^2(\Omega)$ and $f(x,t)\equiv0$, the error estimate for the lumped mass finite element scheme \eqref{eq:slmFE} has been considered in \cite{JinLZ:2013}. In the following theorem, we provide an improved result without the term $|\log h|$ by using the similar argument in the proof of Theorem \ref{thm:errslmFE}.
\begin{theorem}\label{thm:errslmFE1}
  Assume $u^0(x)\in L^2(\Omega)$ and $f(x,t)\equiv0$. Let $u(t)$ and $\bar{u}_h(t)$ be the solutions to \eqref{eq:sFE} and \eqref{eq:slmFE}, respectively. Then we have the error $u(t)-\bar{u}_h(t)$ satisfies
  \begin{equation}\label{eq:errslmFEM3}
    \|u(t)-\bar{u}_h(t)\|\le ct^{-\alpha}\|u^0\|h,\quad t>0.
  \end{equation}
  Moreover, if the quadrature error operator $Q_h$ defined by \eqref{eq:Qh} satisfies \eqref{eq:seQh}, then we have
  \begin{equation}\label{eq:errslmFEM4}
    \|u(t)-\bar{u}_h(t)\|\le ct^{-\alpha}\|u^0\|h^2,\quad t>0.
  \end{equation}
\end{theorem}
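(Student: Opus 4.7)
The plan is to adapt the contour-integral argument used to prove Theorem \ref{thm:errslmFE} to the homogeneous case $f\equiv 0$, $u^0\in L^2(\Omega)$, while farming out the Galerkin-versus-continuous part to Theorem \ref{thm:errsFE1}.

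First, I would split $u-\bar u_h=(u-u_h)+(u_h-\bar u_h)$. The improved estimate from Theorem \ref{thm:errsFE1} already gives $\|u-u_h\|\le ct^{-\alpha}h^2\|u^0\|$, which dominates both target rates, so the new work lies entirely in controlling the lumped-mass defect $u_h-\bar u_h$. Since $f\equiv 0$, the inverse Laplace transform applied to \eqref{eq:LTsFEO} (and its lumped-mass analogue) yields
\begin{equation*}
u_h(t)-\bar u_h(t)=\frac{1}{2\pi i}\int_{\Gamma_\varepsilon^\theta\cup S_\varepsilon}e^{st}s^{\alpha-1}\bigl[(s^\alpha-\Delta_h)^{-1}-(s^\alpha-\bar\Delta_h)^{-1}\bigr]P_hu^0\,ds,
\end{equation*}
with $\varepsilon=1/t$. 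Setting $w_h=(s^\alpha-\Delta_h)^{-1}P_hu^0$, $\bar w_h=(s^\alpha-\bar\Delta_h)^{-1}P_hu^0$, a direct computation from \eqref{eq:Deltah} and \eqref{eq:bDeltah} gives the resolvent identity $\bar w_h-w_h=(s^\alpha-\bar\Delta_h)^{-1}(\bar\Delta_h-\Delta_h)w_h$, while the defining relations for the two discrete Laplacians translate into the quadrature-operator formula $((\bar\Delta_h-\Delta_h)\varphi,\psi)=-Q_h(\bar\Delta_h\varphi,\psi)$ for every $\psi\in X_h$.

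The next step is to bound $\|\bar w_h-w_h\|$ by $ch^k|s|^{-\alpha}\|u^0\|$ with $k=1$ in general and $k=2$ under the hypothesis \eqref{eq:seQh}. I would combine (i) the baseline quadrature estimate $|Q_h(\chi,\psi)|\le ch\|\nabla\chi\|\|\psi\|$ (respectively $|Q_h(\chi,\psi)|\le ch^2\|\nabla\chi\|\|\nabla\psi\|$ under \eqref{eq:seQh}); (ii) the operator identity $\bar\Delta_h(s^\alpha-\bar\Delta_h)^{-1}=s^\alpha(s^\alpha-\bar\Delta_h)^{-1}-I$, which is uniformly bounded on $X_h$; and (iii) the standard gradient resolvent estimate $\|\nabla(s^\alpha-\Delta_h)^{-1}P_hv\|\le c|s|^{-\alpha/2}\|v\|$, used twice when $k=2$. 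Multiplying the resulting bound by $s^{\alpha-1}$ and $e^{st}$, the integrand along $\Gamma_\varepsilon^\theta\cup S_\varepsilon$ is of order $ch^k|s|^{-1}|e^{st}|$, and the now-routine contour integration with $\varepsilon=1/t$ delivers the claimed $ct^{-\alpha}h^k\|u^0\|$.

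The main obstacle, and the reason the original proof in \cite{JinLZ:2013} carries a stray $|\log h|$, is the accounting of powers of $|s|$ in the resolvent difference. A naive combination of $\|(s^\alpha-\bar\Delta_h)^{-1}\|\le c|s|^{-\alpha}$ with an $H^1\times H^1$ quadrature bound leaves the integrand only marginally integrable on the horizontal rays of $\Gamma_\varepsilon^\theta$ near $|s|=\varepsilon$, and choosing $\varepsilon=1/t$ then forces a $\log(1/\varepsilon)$ contribution. The cure, exactly as in the proof of Theorem \ref{thm:errslmFE}, is to trade one resolvent factor for the uniformly bounded $\bar\Delta_h(s^\alpha-\bar\Delta_h)^{-1}$; this sharpens the $|s|$-decay of $\|\bar w_h-w_h\|$ by one full power of $|s|^{\alpha}$ and removes the marginal divergence, yielding the log-free rate. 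Verifying this $|s|$-balance is the only delicate computation; everything else reduces to bookkeeping already carried out for Theorem \ref{thm:errslmFE}.
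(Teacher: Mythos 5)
Your overall route coincides with the paper's: split $u-\bar u_h=(u-u_h)+(u_h-\bar u_h)$, invoke Theorem \ref{thm:errsFE1} for the Galerkin part, write the lumped-mass defect as a sector contour integral of a resolvent difference, convert that difference into a $Q_h$-term, and exploit the uniform boundedness of $\bar\Delta_h(s^\alpha-\bar\Delta_h)^{-1}=s^\alpha(s^\alpha-\bar\Delta_h)^{-1}-I$ together with \eqref{eq:eQh}/\eqref{eq:seQh}. The paper's error equation \eqref{eq:eslmFE} and the representations \eqref{eq:heh}--\eqref{eq:eh} are exactly the time-domain and Laplace-domain forms of your resolvent identity, so there is no methodological difference.

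The problem is the quantitative step you yourself flag as ``the only delicate computation.'' The correct bookkeeping, following \eqref{eq:heh} with $f\equiv0$, is
\begin{equation*}
\|\hat e_h(s)\|\le \|(s^\alpha-\bar\Delta_h)^{-1}\bar\Delta_h\|\,\|Q_h\bigl(s^{\alpha-1}P_hu^0-s^\alpha\hat u_h(s)\bigr)\|\le ch^k\bigl(|s|^{\alpha-1}\|u^0\|+|s|^{\alpha}\|\hat u_h(s)\|\bigr)\le ch^k|s|^{\alpha-1}\|u^0\|,
\end{equation*}
using $\|\hat u_h(s)\|\le M|s|^{-1}\|u^0\|$ from \eqref{eq:huhs}. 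In your notation this reads $\|\bar w_h-w_h\|\le ch^k\|u^0\|$ uniformly in $s$, \emph{not} $ch^k|s|^{-\alpha}\|u^0\|$. The extra factor $|s|^{-\alpha}$ cannot be extracted from your ingredients (i)--(iii): if you spend the resolvent as $\|(s^\alpha-\bar\Delta_h)^{-1}\|\le M|s|^{-\alpha}$, you are left with $\bar\Delta_hQ_h$, for which \eqref{eq:eQh} yields no power of $h$ without a gradient, and each gradient resolvent bound costs a factor $|s|^{\alpha/2}$ that exactly cancels the gain (for $k=2$, the two uses of $\|\nabla(s^\alpha-\Delta_h)^{-1}P_hv\|\le c|s|^{-\alpha/2}\|v\|$ are consumed by the prefactor $s^\alpha$ in the identity, leaving a bound independent of $s$). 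A leading-order check confirms the claimed bound is false: for $|s|^\alpha$ comparable to the largest eigenvalue of $-\Delta_h$, the difference behaves like $s^{-2\alpha}(\bar\Delta_h-\Delta_h)P_hu^0\sim h^2\|u^0\|$, whereas $ch^2|s|^{-\alpha}\sim h^4\|u^0\|$. Your final step is also internally inconsistent: an integrand of order $ch^k|s|^{-1}|e^{st}|$ integrates over $\Gamma_\varepsilon^\theta\cup S_\varepsilon$ with $\varepsilon=1/t$ to a $t$-independent constant, not to $t^{-\alpha}$; it is the correct integrand $ch^k|s|^{\alpha-1}|e^{st}|$ that produces the factor $t^{-\alpha}$ in \eqref{eq:errslmFEM3}--\eqref{eq:errslmFEM4}. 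The repair is simply to accept the weaker, correct bound $\|\hat e_h(s)\|\le ch^k|s|^{\alpha-1}\|u^0\|$ and integrate, which reproduces the paper's argument verbatim.
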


\section{Time discretization} \label{sec:4}
In this section, we construct two fully discrete schemes for solving \eqref{eq:tfdesub0}-\eqref{eq:ibconditions} with the singular source term $f(x,t)$ satisfying Assumption \ref{af} and establish the error estimates in time. Without loss of generality, our discussion is mainly on the semidiscrete scheme \eqref{eq:sFEO}. Analogous results for the lump mass FEM can be obtained by the same technique.

If the singular source term satisfies Assumption \ref{af}, then the result of Theorem \ref{thm:wpr} reveals that the analytic solution to the problem \eqref{eq:tfdesub0}-\eqref{eq:ibconditions} will be unbounded near the origin as well for $-1<\mu<-\alpha$. Singularity of the source term and the solution will bring a severe influence on the accuracy of numerical results. We present an example to illustrate the phenomenon of order reduction of the schemes based on backward Euler (BE) and second-order BDF (SBD) in \cite{JLZ2016,JinLZ2017}.
\begin{example}\label{ex:8.1}
  Consider the fractional ordinary differential equation (fODE) ${^C}D^{\alpha}u(t)=\lambda u(t)+f(t)$ for $t\in (0,T]$, with $u^{0}=0$ and $f(t)=\frac{\Gamma(\nu+1)}{\Gamma(\nu+1-\alpha)} t^{\nu-\alpha}-\lambda t^{\nu}$, where $\alpha-1<\nu<0$ and $\lambda=-1$. The exact solution is $u(t)=t^{\nu}$.
\end{example}

The existing corrected BE and uncorrected SBD schemes (see \cite{JLZ2016,JinLZ2017}) for the fODE in Example \ref{ex:8.1} are given by
\begin{equation} \label{eq:4.8}
  \tau^{-\alpha}\sum_{j=0}^{n}\sigma_{j}\left(u_{n-j}-u^{0}\right)=\lambda u_{n}+f(t_{n})
\end{equation}
for $1\leq n \leq N$, where $\tau=T/N$ and $\sigma_{j}$ being the coefficients of $(1-\xi)^\alpha$ or $(\frac{3}{2}-2\xi+\frac{1}{2}\xi^{2})^\alpha$. Note that the term $f(t)$ in Example \ref{ex:8.1} is unbounded at $t=0$, then the corrected SBD scheme in \cite{JLZ2016,JinLZ2017} is not applicable to Example \ref{ex:8.1}. Thus the order of accuracy of the uncorrected SBD scheme for the fODE in Example \ref{ex:8.1} can not exceed one.

The errors $e_{N}$ defined by $|u_{N}-u(T)|$ with various $N$ such that $N\tau=T$ are presented in Tables \ref{ta:2} and \ref{ta:2b}. Convergence rates are checked by the formula $rate=\log_{2}\left(e_{N}/{e_{2N}}\right)$ with an average. As shown in Tables \ref{ta:2} and \ref{ta:2b}, the corrected BE and uncorrected SBD schemes in \eqref{eq:4.8} both fail to restore first order of accuracy for Example \ref{ex:8.1}.

\begin{table}[ht!]
  \centering
  \caption{Errors and convergence rates for Example \ref{ex:8.1} at $T=1$ by the corrected BE scheme \eqref{eq:4.8}.}
  \label{ta:2} % label should put below the caption (once again)
  \setlength{\tabcolsep}{4pt}%\vskip-5pt%\footnotesize
  \begin{tabular}{|c|c|ccccc|c|}
    \hline
    $\alpha$&$\nu$ & $N=20$ &40& 80&160 &320  & rate \\
    \hline
    0.1 & -0.1 & 2.7636E-03 & 1.5279E-03 & 8.4788E-04 & 4.7180E-04 & 2.6310E-04 & $\approx$ 0.85\\
        & -0.5 & 2.0762E-02 & 1.5103E-02 & 1.1052E-02 & 8.1204E-03 & 5.9843E-03 & $\approx$ 0.45\\
        & -0.9 & 4.1489E-01 & 4.0265E-01 & 3.9146E-01 & 3.8112E-01 & 3.7153E-01 & $\approx$ 0.04\\\hline
    0.5 & -0.1 & 4.6742E-02 & 3.3543E-02 & 2.4446E-02 & 1.8019E-02 & 1.3391E-02 & $\approx$ 0.45\\
        & -0.3 & 1.1258E-01 & 9.2906E-02 & 7.7893E-02 & 6.6046E-02 & 5.6447E-02 & $\approx$ 0.25\\
        & -0.5 & 2.8959E-01 & 2.7507E-01 & 2.6512E-01 & 2.5824E-01 & 2.5345E-01 & $\approx$ 0.05\\\hline
    0.7 & -0.1 & 1.3185E-01 & 1.1051E-01 & 9.3940E-02 & 8.0587E-02 & 6.9526E-02 & $\approx$ 0.23\\
        & -0.2 & 1.9793E-01 & 1.7798E-01 & 1.6231E-01 & 1.4934E-01 & 1.3817E-01 & $\approx$ 0.13\\
        & -0.3 & 2.9962E-01 & 2.8907E-01 & 2.8275E-01 & 2.7895E-01 & 2.7666E-01 & $\approx$ 0.03\\\hline
  \end{tabular}
\end{table}
\begin{table}[ht!]
  \centering
  \caption{Errors and convergence rates for Example \ref{ex:8.1} at $T=1$ by the uncorrected SBD scheme \eqref{eq:4.8}.}	
  \label{ta:2b} % label should put below the caption (once again)
  \setlength{\tabcolsep}{4pt}%\vskip-5pt%\footnotesize
  \begin{tabular}{|c|c|ccccc|c|}
    \hline
    $\alpha$&$\nu$ & $N=20$ &40& 80&160 &320  & rate \\
    \hline
    0.1 & -0.1 & 2.5725E-03 & 1.4344E-03 & 8.0166E-04 & 4.4881E-04 & 2.5163E-04 & $\approx$ 0.84\\
        & -0.5 & 2.0097E-02 & 1.4781E-02 & 1.0893E-02 & 8.0416E-03 & 5.9451E-03 & $\approx$ 0.44\\
        & -0.9 & 4.1427E-01 & 4.0236E-01 & 3.9131E-01 & 3.8105E-01 & 3.7149E-01 & $\approx$ 0.04\\\hline
    0.5 & -0.1 & 4.4559E-02 & 3.2453E-02 & 2.3900E-02 & 1.7746E-02 & 1.3254E-02 & $\approx$ 0.44\\
        & -0.3 & 1.0972E-01 & 9.1482E-02 & 7.7180E-02 & 6.5688E-02 & 5.6267E-02 & $\approx$ 0.24\\
        & -0.5 & 2.8695E-01 & 2.7376E-01 & 2.6447E-01 & 2.5791E-01 & 2.5328E-01 & $\approx$ 0.05\\\hline
    0.7 & -0.1 & 1.2845E-01 & 1.0873E-01 & 9.3022E-02 & 8.0116E-02 & 6.9286E-02 & $\approx$ 0.22\\
        & -0.2 & 1.9449E-01 & 1.7619E-01 & 1.6139E-01 & 1.4887E-01 & 1.3792E-01 & $\approx$ 0.12\\
        & -0.3 & 2.9664E-01 & 2.8754E-01 & 2.8198E-01 & 2.7856E-01 & 2.7646E-01 & $\approx$ 0.03\\\hline
  \end{tabular}
\end{table}

To tackle the above disadvantages of the existing time-stepping schemes, we intend to consider a reformulation of the semidiscrete scheme \eqref{eq:sFEO}, and propose two new fully discrete schemes in the next subsections to preserve the optimal first and second order of accuracy.

\subsection{GLBE scheme}
First, we define $U_{h}(t)$ and $F_{h}(t)$ by $\int_{0}^{t}u_{h}(\xi)\mathrm{d}\xi$ and $\int_{0}^{t}f_{h}(\xi)\mathrm{d}\xi$, respectively.  Then for integrable $u_{h}$ and $f_{h}$ it follows that
\begin{equation}\label{eq:u1}
  D{U}_{h}(t)={u}_{h}(t),\quad \quad {U}_{h}(0)=0
\end{equation}
and
\begin{equation}\label{eq:f1}
  DF_{h}(t)=f_{h}(t),\quad \quad  F_{h}(0)=0
\end{equation}
for a.e. $t>0$, where $D:=\partial /\partial t$. Next, we substitute \eqref{eq:u1} and \eqref{eq:f1} into \eqref{eq:sFEO} and get
\begin{equation}\label{eq:DU}
  {^C}D^{\alpha}_tDU_{h}(t)=\Delta_{h}DU_{h}(t)+DF_{h}(t).
\end{equation}
Further, integrating \eqref{eq:DU} from $0$ to $t$ and using the semigroup property of fractional integrals arrive at
\begin{equation}\label{eq:daU}
  {^C}D^{\alpha}_tU_{h}(t)=\Delta_{h}U_{h}(t)+F_{h}(t)+\frac{t^{1-\alpha}}{\Gamma(2-\alpha)}u_{h}(0),\quad
  U_{h}(0)=0.
\end{equation}
Together with \eqref{eq:u1}-\eqref{eq:f1}, the semidiscrete scheme \eqref{eq:daU} can be viewed as an equivalent form of \eqref{eq:sFEO}.

Given a uniform partition of the interval $[0, T]$ by
\begin{equation*}
  0=t_{0}<t_{1}<\cdots<t_{N-1}<t_{N}=T.
\end{equation*}
The step size of the uniform mesh is denoted by $\tau=T/N$ and $t_{n}=n\tau$ for $0\le n\le N$.

We introduce $\tilde{U}_{h}(t)$ and $\tilde{u}_{h}(t)$ as approximations to $U_{h}(t)$ and $u_{h}(t)$ solving \eqref{eq:daU} and \eqref{eq:u1}, respectively. For $t>0$,
$\tilde{U}_{h}(t)$ satisfies the difference equation
\begin{equation}\label{eq:7.3}
  D_{\tau}^{\alpha}\tilde{U}_{h}(t)=\Delta_{h}\tilde{U}_{h}(t)+F_{h}(t)
  +\frac{t^{1-\alpha}}{\Gamma(2-\alpha)}u_{h}(0),
\end{equation}
and is prescribed by zero otherwise, where $F_h(t)$ satisfying \eqref{eq:f1}. Here $D_{\tau}^{\alpha}$ denotes the well-known Gr\"unwald-Letnikov (GL) or fractional backward Euler difference operator, which is written as
\begin{equation}\label{eq:cGL}
  D_{\tau}^{\alpha}\tilde{U}_{h}(t)=\tau^{-\alpha}\sum_{j=0}^{\infty}\sigma_{j}\tilde{U}_{h}(t-j\tau),
\end{equation}
where $\{\sigma_{j},~j\ge 0\}$ are coefficients of a generating function such that $\sum\limits_{j=0}^{\infty}\sigma_{j}\xi^{j}=(1-\xi)^{\alpha}$. Moreover, we denote  $D_{\tau}$ by the backward Euler (BE) operator such that $D_{\tau}\tilde{U}_{h}(t)=\tau^{-1}(\tilde{U}_{h}(t)-\tilde{U}_{h}(t-\tau))$, and let
\begin{equation}\label{eq:7.4}
  \tilde{u}_{h}(t)=D_{\tau}\tilde{U}_{h}(t).
\end{equation}
This implies $\tilde{u}_{h}(t)=0$ for $t\leq 0$ as well.
Then choosing $t=t_{n}$ for $n=1,\cdots, N$ in \eqref{eq:7.3} and \eqref{eq:7.4}, we propose a fully discrete scheme, named by GLBE and of the form
\begin{equation}\label{eq:GLBE}
  \begin{split}
    &\tau^{-\alpha}\sum_{j=0}^{n}\sigma_{j}\tilde{U}_{h}^{n-j}=\Delta_{h}\tilde{U}_{h}^{n}+F_{h}^{n}
     +\frac{t_{n}^{1-\alpha}}{\Gamma(2-\alpha)}u_{h}(0),\\
    &\tilde{u}_{h}^{n}=\tau^{-1}\big(\tilde{U}_{h}^{n}-\tilde{U}_{h}^{n-1}\big),
  \end{split}
\end{equation}
where $\tilde{u}_{h}^{n}:=\tilde{u}_{h}(t_{n})$, $\tilde{U}_{h}^{n}:=\tilde{U}_{h}(t_{n})$
and $F_{h}^{n}:=F_{h}(t_{n})$ with $F_h(\cdot)$ satisfying \eqref{eq:f1}.

We illustrate the superiority of the above method by Example \ref{ex:8.1}, where the fODE is now discretized by
\begin{equation}\label{eq:4.8a}
  \tau^{-\alpha}\sum_{j=0}^{n}\sigma_{j}U_{n-j}=\lambda U_{n}+F(t_{n})+\frac{t_{n}^{1-\alpha}}{\Gamma(2-\alpha)}u^{0}\quad\text{and} \quad
  u_{n}=\tau^{-1}\left(U_{n}-U_{n-1}\right).
\end{equation}
Here $F(t)=\int_{0}^{t}f(\xi)\mathrm{d}\xi$.
From Table \ref{ta:3}, the first-order accuracy of the scheme \eqref{eq:4.8a} can be observed for Example \ref{ex:8.1} with various $\alpha\in (0,1)$ and $\nu\in [\alpha-1,0)$, where the convergence rates are checked by the formula $rate=\log_{2}\left(e_{N}/{e_{2N}}\right)$ with an average. In contrast, as shown in Table \ref{ta:2}, the corrected BE scheme \eqref{eq:4.8} fails to restore the first-order accuracy when the source term is singular near the origin.

\begin{table}[ht!]
  \centering
  \caption{Errors and convergence rates for Example \ref{ex:8.1} at $T=1$ by the GLBE scheme in \eqref{eq:4.8a}.}\label{ta:3} % label should put below the caption (once again)
  \setlength{\tabcolsep}{4pt}%\vskip-5pt%\footnotesize
  \begin{tabular}{|c|c|ccccc|c|}
    \hline
    $\alpha$ & $\nu$ & $N=20$ & 40 & 80 & 160 & 320 & rate \\\hline
    0.1 & -0.1 & 2.3849E-03 & 1.1760E-03 & 5.8379E-04 & 2.9082E-04 & 1.4513E-04 & $\approx$ 1.01\\
        & -0.5 & 1.2491E-02 & 6.1250E-03 & 3.0285E-03 & 1.5042E-03 & 7.4894E-04 & $\approx$ 1.01\\
        & -0.9 & 3.3167E-02 & 1.6092E-02 & 7.8995E-03 & 3.9006E-03 & 1.9320E-03 & $\approx$ 1.03\\\hline
    0.5 & -0.1 & 1.0049E-03 & 4.0389E-04 & 1.6766E-04 & 7.1469E-05 & 3.1188E-05 & $\approx$ 1.25\\
        & -0.3 & 6.8081E-03 & 3.1971E-03 & 1.5226E-03 & 7.3162E-04 & 3.5376E-04 & $\approx$ 1.07\\
        & -0.5 & 1.6924E-02 & 8.2275E-03 & 4.0464E-03 & 2.0030E-03 & 9.9517E-04 & $\approx$ 1.02\\\hline
    0.7 & -0.1 & 8.2958E-04 & 2.1682E-04 & 2.8495E-05 & 1.9119E-05 & 2.3754E-05 & $\approx$ 1.28\\
        & -0.2 & 4.7144E-03 & 2.1506E-03 & 9.9154E-04 & 4.5987E-04 & 2.1400E-04 & $\approx$ 1.12\\
        & -0.3 & 1.0252E-02 & 5.0163E-03 & 2.4778E-03 & 1.2303E-03 & 6.1272E-04 & $\approx$ 1.02\\\hline
  \end{tabular}
\end{table}

We next establish the fully discrete error estimates by means of Laplace transform.
By taking Laplace transform on \eqref{eq:u1},\eqref{eq:f1} and \eqref{eq:daU}, the semidiscrete solution $u_{h}(t)$ in \eqref{eq:uh} can be rewritten as
\begin{equation}\label{eq:uh1}
  u_{h}(t)=\frac{1}{2\pi i}\int_{\Gamma_{ \varepsilon}^{\theta}\cup S_{\varepsilon}}e^{st}s\hat{U}_{h}(s)\mathrm{d}s,
\end{equation}
where $\Gamma_{ \varepsilon}^{\theta}\cup S_{\varepsilon}$ is defined by \eqref{eq:gammavts} and
\begin{equation}\label{eq:hatUh}
  \hat{U}_{h}(s)=(s^{\alpha}-\Delta_{h})^{-1}
  \big(\hat{F}_{h}(s)+s^{\alpha-2}u_{h}(0)\big).
\end{equation}
In addition, in view of the definition of $F_{h}$ satisfying \eqref{eq:f1} and Assumption \ref{af}, we get
\begin{equation}\label{eq:Fs}
%  \hat{F}_{h}(s)=s^{-1}\hat{f_{h}}(s)=\sum_{j=1}^{K}c_{j}\Gamma(\mu+1)s^{-\mu-2}P_{h}g_{j}\quad \text{for}\quad\Re(s)>0.
  \|\hat{F}_{h}(s)\|=|s|^{-1}\|\hat{f_{h}}(s)\|= |s|^{-1}\|P_{h}\hat{f}(s)\|\le |s|^{-1}\|\hat{f}(s)\|\le c|s|^{-\mu-2}.
\end{equation}

Since $\tilde{U}_{h}(t)$ and $\tilde{u}_{h}(t)$ are solutions of \eqref{eq:7.3} and \eqref{eq:7.4}, respectively, we have the representations $\tilde{U}_{h}(x,t)=\sum\limits_{j=1}^{M-1}\tilde{U}_{j}^{h}(t)\varphi_{j}^{h}(x)$ and $\tilde{u}_{h}(x,t)=\sum\limits_{j=1}^{M-1}\tilde{u}_{j}^{h}(t)\varphi_{j}^{h}(x)$, where $\varphi_{j}^{h}$ is the eigenfunction corresponding to the eigenvalue $\lambda_{j}^{h}$ of the operator $-\Delta_{h}$ such that $-(\Delta_{h}\varphi_{j}^{h},\psi)=\lambda_{j}^{h}(\varphi_{j}^{h},\psi)$ for any $\psi\in X_h$ and each $1\leq j\leq M-1$. Then substituting expressions of $\tilde{u}_{h}$ and $\tilde{U}_{h}$ into \eqref{eq:7.3} and \eqref{eq:7.4} yields that
\begin{equation}
  \tilde{u}_{j}^{h}(t)=\sum_{k=0}^{\infty}p_{k}H(t-t_{k})\big(F_{j}(t-t_{k})+v_{j}(t-t_{k})\big),\quad  1\leq j\leq M-1,~~t>0,
\end{equation}
where $F_{j}(t)=(F_{h}(t),\varphi_{j}^{h})$, $v_{j}(t)=v(t)(u^{0},\varphi_{j}^{h})$ with $v(t)=t^{1-\alpha}/\Gamma(2-\alpha)$, and $H(t)$ denotes the Heaviside function which equals to one for $t\geq 0$ and zero otherwise. The coefficients $\{p_{k}\}_{k=0}^{\infty}$ are generated by the power series of $\omega(z)\big(\omega(z)^{\alpha}+\lambda_{j}^{h}\big)^{-1}$ with the notation $\omega(z):=\tau^{-1}(1-z)$.  From the fact that $F_{h}(t)$ and $v(t)$ are both continuous functions in time and zero at $t=0$, it follows that $\tilde{u}_{j}^{h}(t)$ is continuous  for each $j$ when $t>0$. Therefore, we can rewrite $\tilde{u}_{h}$ in \eqref{eq:7.4} as
\begin{equation}\label{eq:buh}
  \tilde{u}_{h}(t)=\frac{1}{2\pi i}\int_{\Gamma}e^{st}\omega(e^{-s\tau})\widehat{\tilde{U}_{h}}(s)\mathrm{d}s,
\end{equation}
where $\Gamma$ is given by \eqref{gammac}. Moreover, with $\tilde{U}_{h}(t)=0$ for $t\leq 0$, we have from \eqref{eq:cGL} that
\begin{equation*}
  \widehat{D_{\tau}^{\alpha}\tilde{U}_{h}}(s)=\tau^{-\alpha}
  \sum_{j=0}^{\infty}\sigma_{j}\int_{0}^{+\infty}e^{-s t}\tilde{U}_{h}(t-j\tau)\mathrm{d}t
  =\omega(e^{-s\tau})^{\alpha}\widehat{\tilde{U}_{h}}(s),
\end{equation*}
and then it yields from \eqref{eq:7.3} that
\begin{equation}\label{eq:bUh}
  \widehat{\tilde{U}_{h}}(s)=\big(\omega(e^{-s\tau})^{\alpha}-\Delta_{h}\big)^{-1}
  \big(\hat{F}_{h}(s)+s^{\alpha-2}u_{h}(0)\big).
\end{equation}

\begin{theorem}\label{le:81}
  Assume that $u^0(x)\equiv0$ and $f(x,t)$ in \eqref{eq:tfdesub0} satisfies Assumption \ref{af}. Let $u_{h}$ and $\tilde{u}_{h}^n$ be the solutions to \eqref{eq:sFEO} and \eqref{eq:GLBE}, respectively. Then we have
  \begin{equation}\label{eq:81}
    \|u_{h}(t_{n})-\tilde{u}_{h}^{n}\|\leq c\left(t_{n}^{\alpha+\mu-1}\tau +t_{n}^{\alpha-2}\tau^{2+\mu}\right),~~-1<\mu<0,~~1\le n\le N.
  \end{equation}
\end{theorem}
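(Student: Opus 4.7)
The plan is to estimate the error via the Laplace-transform representations of $u_h$ and $\tilde u_h$ that have already been assembled in the excerpt. With $u_h(0)=0$, the semidiscrete solution from \eqref{eq:uh1}--\eqref{eq:hatUh} reduces to
\[
u_h(t_n)=\frac{1}{2\pi i}\int_{\Gamma_\varepsilon^\theta\cup S_\varepsilon} e^{st_n}\,s(s^\alpha-\Delta_h)^{-1}\hat F_h(s)\,ds,
\]
and the fully discrete solution from \eqref{eq:buh}--\eqref{eq:bUh} becomes
\[
\tilde u_h(t_n)=\frac{1}{2\pi i}\int_{\Gamma} e^{st_n}\,\omega(e^{-s\tau})(\omega(e^{-s\tau})^\alpha-\Delta_h)^{-1}\hat F_h(s)\,ds.
\]
With the choice $\varepsilon=1/t_n$, I would deform both contours to a common truncated contour $\Gamma^\star=\Gamma_\varepsilon^{\theta,\tau}\cup S_\varepsilon$, where $\Gamma_\varepsilon^{\theta,\tau}$ is the restriction of $\Gamma_\varepsilon^\theta$ to $|s|\le\pi/(\tau\sin\theta)$. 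On $\Gamma^\star$ the product $|s\tau|$ is bounded, so the two standard backward-Euler consistency estimates $|s-\omega(e^{-s\tau})|\le c\tau|s|^2$ and $|s^\alpha-\omega(e^{-s\tau})^\alpha|\le c\tau|s|^{\alpha+1}$ are available, and $\omega(e^{-s\tau})\in\Sigma_{\theta'}$ for some $\theta'\in(\pi/2,\pi)$ so that \eqref{eq:resolv} applies to $(\omega(e^{-s\tau})^\alpha-\Delta_h)^{-1}$.

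After deformation the error splits as $u_h(t_n)-\tilde u_h(t_n)=I_{\mathrm{main}}+I_{\mathrm{tail}}$, where $I_{\mathrm{main}}$ is the integral on $\Gamma^\star$ of the difference of the two kernels, and $I_{\mathrm{tail}}$ collects the two pieces of $\Gamma_\varepsilon^\theta$ outside the strip (together with the horizontal connectors arising when the $\tilde u_h$ contour is deformed). For $I_{\mathrm{main}}$ I would use the resolvent identity
\[
s(s^\alpha-\Delta_h)^{-1}-\omega(e^{-s\tau})(\omega(e^{-s\tau})^\alpha-\Delta_h)^{-1}=[s-\omega(e^{-s\tau})](s^\alpha-\Delta_h)^{-1}+\omega(e^{-s\tau})(s^\alpha-\Delta_h)^{-1}[\omega(e^{-s\tau})^\alpha-s^\alpha](\omega(e^{-s\tau})^\alpha-\Delta_h)^{-1},
\]
so that both summands are $O(\tau|s|^{2-\alpha})$ in operator norm. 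Combined with $\|\hat F_h(s)\|\le c|s|^{-\mu-2}$ from Assumption \ref{af}, the integrand is bounded by $c\tau|e^{st_n}||s|^{-\alpha-\mu}$, and the substitution $u=\rho t_n$ on the rays of $\Gamma^\star$ produces $\|I_{\mathrm{main}}\|\le c\tau\,t_n^{\alpha+\mu-1}$.

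For $I_{\mathrm{tail}}$ I would invoke the elementary bound $|e^{st_n}||s|^{2-\alpha}\le c\,t_n^{\alpha-2}$ on the rays $\arg s=\pm\theta$, which converts the pointwise bound $|s|\,\|(s^\alpha-\Delta_h)^{-1}\|\,\|\hat F_h(s)\|\le c|s|^{-\alpha-\mu-1}$ into the weight $c\,t_n^{\alpha-2}|s|^{-\mu-3}$; since $-\mu-3<-2$, the outer integral $\int_{c/\tau}^\infty\rho^{-\mu-3}\,d\rho$ converges to $c\tau^{\mu+2}$, yielding $\|I_{\mathrm{tail}}\|\le c\,t_n^{\alpha-2}\tau^{2+\mu}$. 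Summing the two contributions gives \eqref{eq:81}. The main obstacle I anticipate is justifying the deformation of the $\tilde u_h$ contour: the classical CQ analysis would reduce the discrete integrand to a single periodic contour via the z-transform $\sum_n F_h^n z^n$ of the source sequence, but Assumption \ref{af} provides no control over that generating series when $f$ is merely in $L^1$. The reformulation in terms of $\hat F_h(s)$ rather than a generating function, highlighted as a novelty in the introduction, is precisely what allows the horizontal connecting segments needed for the $\tilde u_h$ deformation to be absorbed into a bound of the same $t_n^{\alpha-2}\tau^{2+\mu}$ type as $I_{\mathrm{tail}}$, by an estimate parallel to the one above.
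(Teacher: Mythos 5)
Your overall strategy coincides with the paper's: represent both $u_h(t_n)$ and $\tilde u_h^n$ by inverse Laplace transforms, deform onto a common truncated Hankel contour with $\varepsilon=t_n^{-1}$, and split into a kernel-difference term plus remainders. Your estimate of $I_{\mathrm{main}}$ (the paper's $I_2$, via the resolvent identity and the bounds $|s-\omega(e^{-s\tau})|\le c\tau|s|^2$, $|s^\alpha-\omega(e^{-s\tau})^\alpha|\le c\tau|s|^{\alpha+1}$, together with $\|\hat F_h(s)\|\le c|s|^{-\mu-2}$) is exactly the paper's argument and correctly yields $c\,t_n^{\alpha+\mu-1}\tau$. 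Your bound for the continuous tail of $\Gamma_\varepsilon^\theta$ beyond $|s|=\pi/(\tau\sin\theta)$ is also fine (the paper bounds that piece by $c\,t_n^{\alpha+\mu-1}\tau$ instead of $c\,t_n^{\alpha-2}\tau^{2+\mu}$, but either lands inside \eqref{eq:81}).

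The genuine gap is in how you account for the deformation of the $\tilde u_h$ contour. The ``horizontal connectors'' at $\Im s=\pm(2\bar N+1)\pi/\tau$ (the paper's $\Gamma^\pm$) are shown to tend to zero as $L\to\infty$; they contribute nothing. What does \emph{not} vanish is the aliasing sum: $\omega(e^{-s\tau})$ and $(\omega(e^{-s\tau})^\alpha-\Delta_h)^{-1}$ are $2\pi i/\tau$-periodic in $s$, but $\hat F_h(s)$ is not, so collapsing the long vertical segment onto the fundamental strip leaves the residual
\begin{equation*}
I_3=-\sum_{p\neq 0}\int_{\Gamma_{0,\tau}^{\theta}}e^{(s+i2\pi p/\tau)t_n}\,\omega(e^{-s\tau})\bigl(\omega(e^{-s\tau})^{\alpha}-\Delta_{h}\bigr)^{-1}\hat F_h(s+i2\pi p/\tau)\,\mathrm{d}s,
\end{equation*}
which is absent from your decomposition $I_{\mathrm{main}}+I_{\mathrm{tail}}$. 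This term is the actual source of the $t_n^{\alpha-2}\tau^{2+\mu}$ contribution: one uses $\|\hat F_h(s+i2\pi p/\tau)\|\le c\,(2\pi p/\tau)^{-\mu-2}$, the convergence of $\sum_{p\ge1}p^{-\mu-2}$ (this is precisely where $\mu>-1$ enters), and $\int_0^{\pi/(\tau\sin\theta)}e^{\rho t_n\cos\theta}\rho^{1-\alpha}\mathrm{d}\rho\le c\,t_n^{\alpha-2}$. Your closing remark correctly senses that the non-periodicity of $\hat F_h$ is the crux and that the paper's use of the Laplace transform of $F_h$ (rather than a generating function) is what makes the argument work, but attributing the resulting $\tau^{2+\mu}$ term to connector segments misidentifies the mechanism, and without writing down and estimating $I_3$ the identity for $u_h(t_n)-\tilde u_h^n$ is incomplete and the proof does not close.
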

\begin{proof}
  From \eqref{eq:7.3}, \eqref{eq:7.4}, \eqref{eq:GLBE} and \eqref{eq:buh}, we can represent the solution $\tilde{u}_{h}^{n}$ to \eqref{eq:GLBE} as
  \begin{equation}\label{eq:bbuh}
    \tilde{u}_{h}^{n}=\tilde{u}_{h}(t_n)=\lim_{L\to+\infty}\frac{1}{2\pi i}\int_{\sigma-iL}^{\sigma+iL}
    e^{st_n}\omega(e^{-s\tau})\widehat{\tilde{U}_{h}}(s)\mathrm{d}s.
  \end{equation}
  Then for any $L>0$ and fixed $\tau>0$, there exists $\bar{N}\in\mathbb{N}^{+}$ such that $\left( 2\bar{N}+1\right)\pi/\tau\le L\le (2\bar{N}+3)\pi/\tau$, and the integral in \eqref{eq:bbuh} can be divided into three parts
  \begin{equation}\label{eq:iut}\small
    \begin{split}
    \int_{\sigma-iL}^{\sigma+iL}e^{st_n}\omega(e^{-s\tau})\widehat{\tilde{U}_{h}}(s)\mathrm{d}s
         &=\Big(\int_{\sigma+i(2\bar{N}+1)\frac{\pi}{\tau}}^{\sigma+iL}
         +\int_{\sigma-i(2\bar{N}+1)\frac{\pi}{\tau}}^{\sigma+i(2\bar{N}+1)\frac{\pi}{\tau}}+\int_{\sigma-iL} ^{\sigma-i(2\bar{N}+1)\frac{\pi}{\tau}}\Big)
         e^{st_n}\omega(e^{-s\tau})\widehat{\tilde{U}_{h}}(s)\mathrm{d}s.
    \end{split}
  \end{equation}
  For the first integral in \eqref{eq:iut}, it follows from \eqref{eq:bUh}, \eqref{eq:Fs}, Lemmas \ref{le:g0} and \ref{le:45} that
  \begin{equation*}
    \begin{split}
      \|\int_{\sigma+i(2\bar{N}+1)\pi/\tau}^{\sigma+iL}e^{st_n}\omega(e^{-s\tau})
      \widehat{\tilde{U}_{h}}(s)\mathrm{d}s\|
       &\leq c\int_{\sigma+i(2\bar{N}+1)\pi/\tau}^{ \sigma+i(2\bar{N}+3)\pi/\tau}|e^{st_n}||\omega(e^{-s\tau})|^{1-\alpha}
       |s|^{-\mu-2}|\mathrm{d}s|   \\
       &\leq c\int_{(2\bar{N}+1)\pi/\tau}^{(2\bar{N}+3)\pi/\tau}e^{\sigma t_n}(\sigma+y-2\bar{N}\pi/\tau)^{1-\alpha}y^{-\mu-2}\mathrm{d}y \\
       &\leq c\frac{\tau^{\alpha+\mu}}{(2\bar{N}+1)^{\mu+2}}e^{\sigma t_n},
    \end{split}
  \end{equation*}
  where $\sigma=t_n^{-1}$ is chosen. Then the above bound tends to zero when $L\to+\infty$ ($\bar{N}\to+\infty$). Analogous result holds for the third integral in \eqref{eq:iut} as well.

  Next we consider the estimate of the second integral in \eqref{eq:iut}. First, some integral curves are introduced as follows:
  \begin{equation}
    \Gamma_{\varepsilon,\tau}^{\theta}=\{\rho e^{\pm i\theta}:~ \varepsilon\leq\rho\leq \pi/(\tau\sin\theta)\},
  \end{equation}
  \begin{equation}
    \Gamma^{+}=\{x+i(2\bar{N}+1)\pi/\tau:~ \pi/\tau\cot\theta\le x\le \sigma\},
  \end{equation}
  \begin{equation}
    \Gamma^{-}=\{x-i(2\bar{N}+1)\pi/\tau:~ \pi/\tau\cot\theta\le x\le \sigma\}.
  \end{equation}
  As shown in \eqref{eq:bUh}, $\widehat{\tilde{U}_{h}}(s)$ is analytic in the sector $\Sigma_{\theta}$. Using the Cauchy's theorem and the periodic property of exponential function, we obtain
  \begin{equation}\label{eq:812}
    \begin{split}
      &\int_{\sigma-i(2\bar{N}+1)\pi/\tau}^{\sigma+i(2\bar{N}+1)\pi/\tau}
      e^{s t_n}\omega(e^{-s\tau})\widehat{\tilde{U}_{h}}(s)\mathrm{d}s \\
      =&\int_{\Gamma^{-}\cup \Gamma^{+}}e^{s t_n}\omega(e^{-s\tau})\widehat{\tilde{U}_{h}}(s)\mathrm{d}s+\int_{\Gamma_{\varepsilon,\tau}^{\theta}\cup S_{\varepsilon}}e^{s t_n}\omega(e^{-s\tau})\widehat{\tilde{U}_{h}}(s)\mathrm{d}s   \\
      +&\sum_{\substack{p=-\bar{N} \\ p\ne 0}}^{\bar{N}}\int_{\Gamma_{0,\tau}^{\theta}}e^{(s+i2p\pi/\tau) t_n}\omega(e^{-s\tau})\left(\omega(e^{-s\tau})^{\alpha}-\Delta_{h}\right)^{-1}\hat{F}_{h}(s+i2p\pi/\tau)\mathrm{d}s.   \\
    \end{split}
  \end{equation}
  Moreover, by taking $\sigma=t_n^{-1}$, we have
  \begin{equation*}
    \begin{split}
      \|\int_{\Gamma^{-}\cup\Gamma^{+}}e^{s t_n}\omega(e^{-s\tau})\widehat{\tilde{U}_{h}}(s)\mathrm{d}s\|
      &\leq c\Big(\int_{\Gamma^{-}}|e^{st_n}||\omega(e^{-(s+i2\bar{N}\pi/\tau)\tau})|^{1-\alpha}
      |s|^{-\mu-2}|\mathrm{d}s|\\
      &~~~~~+\int_{\Gamma^{+}}|e^{st_n}||\omega(e^{-(s-i2\bar{N}\pi/\tau)\tau})|^{1-\alpha}|s|^{-\mu-2}|\mathrm{d}s| \Big)\\
      &\leq c\int_{\pi/\tau\cot\theta}^{\sigma}e^{xt_n}(|x|+\pi/\tau)^{1-\alpha}
      |(2\bar{N}+1)\pi/\tau|^{-\mu-2}\mathrm{d}x  \\
      &\leq c\frac{\tau^{\alpha+\mu}}{(2\bar{N}+1)^{\mu+2}}e^{\sigma t_n},
    \end{split}
  \end{equation*}
  which tends to zero for $L\to+\infty$ ($\bar{N}\to+\infty$).
  Hence, from \eqref{eq:uh1}, \eqref{eq:bbuh} and \eqref{eq:812}, it follows that
  \begin{equation}\label{eq:gerror}
    \begin{split}
      u_{h}(t_n)-\tilde{u}_{h}^n&=\int_{\Gamma_{\varepsilon}^{\theta}\backslash\Gamma_{\varepsilon,\tau}^{\theta}}e^{st_n}
      s\hat{U}_{h}(s)\mathrm{d}s+\int_{\Gamma_{\varepsilon,\tau}^{\theta}\cup S_{\varepsilon}}e^{s t_n}\left(s\hat{U}_{h}(s)-\omega(e^{-s\tau})\widehat{\tilde{U}_{h}}(s)\right)\mathrm{d}s\\
      &-\sum_{\substack{p=-\infty \\ p\ne 0}}^{+\infty}\int_{\Gamma_{0,\tau}^{\theta}}e^{(s+i2\pi p/\tau) t_n}\omega(e^{-s\tau})(\omega(e^{-s\tau})^{\alpha}-\Delta_{h})^{-1}
                       \hat{F}_{h}(s+i2\pi p/\tau)\mathrm{d}s \\
                       &:=I_{1}+I_{2}+I_{3}.
    \end{split}
  \end{equation}
  The estimation of the first item $I_{1}$ in \eqref{eq:gerror} is given by
  \begin{equation*}
    \begin{split}
      \|I_{1}\|&\leq c \int_{\Gamma_{\varepsilon}^{\theta}\backslash\Gamma_{\varepsilon,\tau}^{\theta}}
      |e^{st_n}||s|^{-\alpha-\mu-1}|\mathrm{d}s|
         \leq c\int_{\frac{\pi}{\tau\sin\theta}}^{+\infty}
      e^{\rho t_n\cos\theta}\rho^{-\alpha-\mu-1}\mathrm{d}\rho\\
         &\leq c\tau\int_{\frac{\pi}{\tau\sin\theta}}^{+\infty}
      e^{\rho t_n\cos\theta}\rho^{-\alpha-\mu}\mathrm{d}\rho
          \leq c t_n^{\alpha+\mu-1}\tau.
    \end{split}
  \end{equation*}
  To estimate $I_{2}$ in \eqref{eq:gerror}, we rewrite it as the summation of two parts, i.e., $I_{2}=I_{2}'+I_{2}''$, where
  \begin{equation*}
    I_{2}'=\int_{\Gamma_{\varepsilon,\tau}^{\theta}\cup S_{\varepsilon}}e^{s t_n}\left(s-\omega(e^{-s\tau})\right)\hat{U}_{h}(s)\mathrm{d}s
      ~~~\text{and}~~~
    I_{2}''=\int_{\Gamma_{\varepsilon,\tau}^{\theta}\cup S_{\varepsilon}}e^{s t_n}\omega(e^{-s\tau})\left(\hat{U}_{h}(s)-\widehat{\tilde{U}_{h}}(s)\right)\mathrm{d}s.
  \end{equation*}
  Let $\varepsilon=t_n^{-1}$, then the result in Lemma \ref{le:45} implies that
  \begin{equation}
    \begin{split}
      \|I_{2}'\|&\leq c\tau\int_{\Gamma_{\varepsilon, \tau}^{\theta}\cup S_{\varepsilon}}|e^{st_n}||s|^{-\alpha-\mu}|\mathrm{d}s| \\
        &\leq c\tau\left(\int_{\varepsilon}^{\frac{\pi}{\tau\sin\theta}}e^{\rho t_n \cos\theta}\rho^{-\alpha-\mu}\mathrm{d}\rho+\int_{-\theta}^{\theta}e^{ \varepsilon t_n\cos\xi}\varepsilon^{-\alpha-\mu+1}\mathrm{d}\xi\right) \\
        &\leq c t_n^{\alpha+\mu-1}\tau.
    \end{split}
  \end{equation}
  On the other hand, using Lemmas \ref{le:g0} and \ref{le:45} arrives at
  \begin{equation*}
    \begin{split}
      &\|(s^{\alpha}-\Delta_{h})^{-1}
      -\left(\omega(e^{-s\tau})^{\alpha}-\Delta_{h}\right)^{-1}\|\\
      \leq& \|\left(\omega(e^{-s\tau})^{\alpha}-\Delta_{h}\right)^{-1}\|
      \|\omega(e^{-s\tau})^{\alpha}-s^{\alpha}\| \|\left(s^{\alpha}-\Delta_{h}\right)^{-1}\| \\
      \leq &\tau|s||\omega(e^{-s\tau})|^{-\alpha}
    \end{split}
  \end{equation*}
  for $s$ enclosed by curves $\Gamma_{0,\tau}^{\theta}$, $\Im(s)=\pm \pi/\tau$ and $\Gamma$.
  This yields
  \begin{equation}
    \begin{split}
      \|I_{2}''\|&\leq c\tau\int_{\Gamma_{\varepsilon,\tau}^{\theta}\cup S_{\varepsilon}}|e^{s t_n}||\omega(e^{-s\tau})|^{1-\alpha}|s|^{-\mu-1}|\mathrm{d}s|  \\
          &\leq \tau\left(\int_{\varepsilon}^{\frac{\pi}{\tau\sin\theta}}e^{\rho t_n\cos\theta }\rho^{-\alpha-\mu}\mathrm{d}\rho+\int_{-\theta}^{\theta}e^{ \varepsilon t_n\cos\xi}\varepsilon^{-\alpha-\mu+1}\mathrm{d}\xi\right) \\
      &\leq c t_n^{\alpha+\mu-1}\tau,
    \end{split}
  \end{equation}
  where $\varepsilon=t_n^{-1}$ is taken. In addition, for any $\mu>-1$, from the inequality
  \begin{equation*}
    \sum_{p=1}^{+\infty}p^{-\mu-2}\leq 1+\int_{1}^{+\infty}p^{-\mu-2}\mathrm{d}p\leq 1+\frac{1}{1+\mu},
  \end{equation*}
  it follows that the third item $I_{3}$ in \eqref{eq:gerror} satisfies
  \begin{equation*}
    \begin{split}
      \|I_{3}\|&\leq c\sum_{p=1}^{+\infty}\int_{\Gamma_{0,\tau}^{\theta}}|e^{st_n}||\omega(e^{-s\tau})|^{1-\alpha}|s+i2p\pi/\tau|^{-\mu-2}|\mathrm{d}s|
                  \\
                 &\leq c \tau^{2+\mu}\sum_{p=1}^{+\infty}p^{-\mu-2}\int_{0}^{\frac{\pi}{\tau\sin\theta}}e^{ \rho t_n\cos\theta}\rho^{1-\alpha}\mathrm{d}\rho   \\
                 &\leq c t_n^{\alpha-2}\tau^{2+\mu}.
    \end{split}
  \end{equation*}
  Therefore, the result \eqref{eq:81} can be obtained.
\end{proof}

For the case $u^0(x)\in L^2(\Omega)$ and $f(x,t)\equiv0$, we can also obtain the following error estimate by the approach analogous to the proof of Theorem \ref{le:81}.
\begin{theorem}\label{thm:err1}
  Assume $u^0(x)\in L^2(\Omega)$ and $f(x,t)\equiv0$ in \eqref{eq:tfdesub0}-\eqref{eq:ibconditions}. Let $u_{h}$ and $\tilde{u}_{h}^n$ be the solutions to \eqref{eq:sFEO} and the GLBE scheme \eqref{eq:GLBE}, respectively. Then it holds that
  \begin{equation}
    \|u_{h}(t_{n})-\tilde{u}_{h}^{n}\|\leq c\left(t_{n}^{-1}\tau +t_{n}^{\alpha-2}\tau^{2-\alpha}\right)\|u^{0}\|,\quad 1\le n\le N.
  \end{equation}
  In addition, if $u^0(x)\in L^2(\Omega)$ and $f(x,t)$ satisfies Assumption \ref{af}, then we have
  \begin{equation*}
    \|u_{h}(t_{n})-\tilde{u}_{h}^{n}\|\leq c\left((t_{n}^{\alpha+\mu-1}+t_{n}^{-1}\|u^{0}\|)\tau +t_{n}^{\alpha-2}\tau^{2+\mu}+\|u^{0}\|t_{n}^{\alpha-2}\tau^{2-\alpha}\right),
  \end{equation*}
  where $-1<\mu<0$.
\end{theorem}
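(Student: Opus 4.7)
The plan is to exploit the linearity of both \eqref{eq:sFEO} and the GLBE scheme \eqref{eq:GLBE} in the data $(u^0,f)$ and reduce the proof to the computation already carried out in Theorem \ref{le:81}. Splitting $u_h=u_h^{[f]}+u_h^{[u^0]}$ and $\tilde{u}_h^n=\tilde{u}_h^{n,[f]}+\tilde{u}_h^{n,[u^0]}$, where the superscripts indicate which datum is turned on, the combined-case error is bounded by the triangle inequality by the sum of the two individual errors. Theorem \ref{le:81} already controls the $[f]$-contribution, so it suffices to prove the first bound (the $f\equiv 0$, $u^0\in L^2$ case); the combined bound then follows by addition.

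For $f\equiv 0$ the quantity $\hat{F}_h(s)$ vanishes, and \eqref{eq:hatUh} and \eqref{eq:bUh} reduce to
\begin{equation*}
  \hat{U}_h(s)=s^{\alpha-2}(s^\alpha-\Delta_h)^{-1}P_h u^0,\qquad \widehat{\tilde{U}_h}(s)=s^{\alpha-2}\bigl(\omega(e^{-s\tau})^\alpha-\Delta_h\bigr)^{-1}P_h u^0.
\end{equation*}
The $L^2$-stability $\|P_h u^0\|\le\|u^0\|$ shows that the role played in the proof of Theorem \ref{le:81} by the source-term bound $\|\hat{F}_h(s)\|\le c|s|^{-\mu-2}$ is now taken over by $\|s^{\alpha-2}P_h u^0\|\le c|s|^{\alpha-2}\|u^0\|$. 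Formally, the entire Laplace-transform argument carries through after the substitution $\mu\mapsto -\alpha$ (equivalently $-\mu-2\mapsto\alpha-2$), with all remaining resolvent and contour estimates unchanged. Plugging this substitution into the three integrals $I_1$, $I_2=I_2'+I_2''$ and $I_3$ of \eqref{eq:gerror} produces the bounds $\|I_1\|\le c\, t_n^{-1}\tau\|u^0\|$, $\|I_2\|\le c\, t_n^{-1}\tau\|u^0\|$ and $\|I_3\|\le c\, t_n^{\alpha-2}\tau^{2-\alpha}\|u^0\|$, whose sum is precisely the first claim.

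The only new point to verify is that the aliasing sum appearing in the estimate of $I_3$ remains summable after the substitution: one needs $\sum_{p\ge 1}p^{\alpha-2}<\infty$, which holds strictly since $\alpha-2<-1$ for every $\alpha\in(0,1)$. This is the exact counterpart of the restriction $\mu>-1$ in Assumption \ref{af} and requires no additional work. All other ingredients, namely the tail estimates as $L\to\infty$, Lemmas \ref{le:g0} and \ref{le:45}, Cauchy's theorem applied on $\Gamma^\pm\cup\Gamma_{\varepsilon,\tau}^\theta\cup S_\varepsilon$, and the choices $\sigma=\varepsilon=t_n^{-1}$, are reused verbatim. Adding the resulting initial-value error to the source-term error from Theorem \ref{le:81} then yields the combined estimate.
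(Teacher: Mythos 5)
Your proposal is correct and is essentially the paper's own argument: the paper gives no separate proof of this theorem, stating only that it follows ``by the approach analogous to the proof of Theorem \ref{le:81}'' with the data term $s^{\alpha-2}u_h(0)$ playing the role of $\hat{F}_h(s)$, which is exactly your substitution $\mu\mapsto-\alpha$ (note $\|s^{\alpha-2}P_hu^0\|\le|s|^{\alpha-2}\|u^0\|$ versus $\|\hat F_h(s)\|\le c|s|^{-\mu-2}$), together with the summability check $\sum_{p\ge1}p^{\alpha-2}<\infty$ and the linear superposition for the combined bound.
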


\begin{remark}
  The error estimates in Theorems \ref{le:81} and \ref{thm:err1} shows that the GLBE scheme \eqref{eq:GLBE} is of first order as $0<\alpha<1$ and $-1<\mu<0$. In addition, the results also hold for $\mu\ge0$ from the proof of Theorems \ref{le:81}.
\end{remark}

\subsection{FBDF22 scheme}
In this subsection, we continue to investigate an alternative fully discrete scheme based on the second-order BDF in order to improve the order of accuracy in time. Throughout this subsection, the same notations $\tilde{U}_{h}(t)$ and $\tilde{u}_{h}(t)$ are used to denote the solutions of the new scheme discussed as follows.

In analogy to \eqref{eq:7.3}, by introducing $\tilde{F}(t)=\int_{0}^{t}F_{h}(\xi)\mathrm{d}\xi$, we define $\tilde{U}_{h}(t)$ as an approximate solution to the semidiscrete scheme \eqref{eq:daU} that satisfies
\begin{equation}\label{eq:8.3}
  \tau^{-\alpha}\sum_{j=0}^{\infty}w_{j}\tilde{U}_{h}(t-j\tau)=\Delta_{h}\tilde{U}_{h}(t)+D_{\tau}\tilde{F}_{h}(t)+D_{\tau}\frac{t^{2-\alpha}}{\Gamma(3-\alpha)}u_{h}^{0}(x)
\end{equation}
for $t>0$, and prescribe $\tilde{U}_{h}(t)=0$ for $t\leq 0$,
where $\tilde{F}_h(t)$ satisfies
\begin{equation}\label{eq:f2}
  D\tilde{F}_h(t)=F_h(t),~~\tilde{F}_h(0)=0
\end{equation}
with $F_h(t)$ given by \eqref{eq:f1}.
The sequence $\{w_{j},~j\ge 0\}$ in \eqref{eq:8.3} satisfies
$\sum\limits_{j=0}^{\infty}w_{j}\xi^{j}=(\frac{3}{2}-2\xi+\frac{1}{2}\xi^{2})^{\alpha}$, and $D_{\tau}$ denotes the second-order backward difference operator such that
\begin{equation}
  D_{\tau}v(t)=\tau^{-1}\Big(\frac{3}{2}v(t)-2v(t-\tau)+\frac{1}{2}v(t-2\tau)\Big).
\end{equation}
Furthermore, we define $\tilde{u}_{h}(t)$ as
\begin{equation}\label{eq:8.4}
  \tilde{u}_{h}(t):=D_{\tau}\tilde{U}_{h}(t)=\tau^{-1}\Big(\frac{3}{2}
  \tilde{U}_{h}(t)-2\tilde{U}_{h}(t-\tau)+\frac{1}{2}\tilde{U}_{h}(t-2\tau)\Big).
\end{equation}
This indicates $\tilde{u}_{h}(t)=0$ for $t\leq 0$ as well.
Then, by taking $t=t_{n}=n\tau$ with $\tau=T/N$ for $n=1,\cdots, N$ in \eqref{eq:8.3} and \eqref{eq:8.4}, we propose a fully discrete scheme, called FBDF22, in the following form
\begin{equation}\label{eq:FBDF22}
  \begin{split}
    &\tau^{-\alpha}\sum_{j=0}^{n}w_{j}\tilde{U}_{h}^{n-j}=\Delta_{h}\tilde{U}_{h}^{n}+D_{\tau}\tilde{F}_{h}^{n}
    +D_{\tau}\frac{t_{n}^{2-\alpha}}{\Gamma(3-\alpha)}u_{h}^{0}(x),\\
    &\tilde{u}_{h}^{n}=\tau^{-1}\Big(\frac{3}{2}\tilde{U}_{h}^{n}-2\tilde{U}_{h}^{n-1}
    +\frac{1}{2}\tilde{U}_{h}^{n-2}\Big),
  \end{split}
\end{equation}
where $\tilde{u}_{h}^{n}:=\tilde{u}_{h}(t_{n})$, $\tilde{U}_{h}^{n}:=\tilde{U}_{h}(t_{n})$
and $\tilde{F}_{h}^{n}:=\tilde{F}_{h}(t_{n})$ with $\tilde{F}(\cdot)$ satisfying \eqref{eq:f2}.

We first apply the above method to solving the problem in Example \ref{ex:8.1} numerically to illustrate its effectiveness for problems with singular source terms. The discrete scheme for the fODE in Example \ref{ex:8.1} is of the form
\begin{equation}\label{eq:5.8a}
  \tau^{-\alpha}\sum_{j=0}^{n}w_{j}U_{n-j}=\lambda U_{n}+D_{\tau}\tilde{F}(t_{n})+D_{\tau}\frac{t_{n}^{2-\alpha}}{\Gamma(3-\alpha)}u^{0},~~
  u_{n}=\tau^{-1}\Big(\frac{3}{2}U_{n}-2U_{n-1}+\frac{1}{2}U_{n-2}\Big)
\end{equation}
for $1\leq n\leq N$ with $N\tau=T$. Table \ref{ta:4} shows the errors and average rates of convergence with different time step sizes and various $\alpha\in (0,1)$ and $\nu-\alpha\in [-1,0)$, where an improved order of $O(\tau^{2})$ is achieved compared with that of the uncorrected SBD scheme in Table \ref{ta:2b} and the GLBE scheme in Table \ref{ta:3}.

\begin{table}[ht!]
  \centering
  \caption{Errors and convergence rates for Example \ref{ex:8.1} at $T=1$ by the scheme \eqref{eq:5.8a}.}	
  \label{ta:4} % label should put below the caption (once again)
  %\vskip-5pt
  \small\setlength{\tabcolsep}{4pt}
  \begin{tabular}{|c|c|ccccc|c|}
    \hline
    $\alpha$ & $\nu$ & $N=$160 & 320 & 640 & 1280 & 2560 & rate \\\hline
    0.1 & -0.1 & 2.7838E-06 & 6.9249E-07 & 1.7276E-07 & 4.3298E-08 & 1.3361E-08 & $\approx$ 1.93(2.0)\\
        & -0.5 & 1.9267E-05 & 4.7876E-06 & 1.1934E-06 & 2.9698E-07 & 7.6388E-08 & $\approx$ 1.99(2.0)\\
        & -0.9 & 4.6794E-05 & 1.1611E-05 & 2.8947E-06 & 7.2358E-07 & 1.9135E-07 & $\approx$ 1.98(2.0)\\\hline
    0.5 & -0.1 & 1.4784E-06 & 3.6535E-07 & 9.0645E-08 & 2.2547E-08 & 5.5332E-09 & $\approx$ 2.02(2.0)\\
        & -0.3 & 8.0490E-06 & 1.9935E-06 & 4.9528E-07 & 1.2328E-07 & 3.0805E-08 & $\approx$ 2.01(2.0)\\
        & -0.5 & 1.8146E-05 & 4.5109E-06 & 1.1244E-06 & 2.8072E-07 & 7.0270E-08 & $\approx$ 2.00(2.0)\\\hline
    0.7 & -0.1 & 1.8151E-07 & 3.5697E-08 & 6.8178E-09 & 1.2529E-09 & 2.1549E-10 & $\approx$ 2.43(2.0)\\
        & -0.2 & 3.1158E-06 & 7.6400E-07 & 1.8785E-07 & 4.6271E-08 & 1.1422E-08 & $\approx$ 2.02(2.0)\\
        & -0.3 & 7.1901E-06 & 1.7901E-06 & 4.4659E-07 & 1.1153E-07 & 2.7887E-08 & $\approx$ 2.00(2.0)\\\hline
  \end{tabular}
\end{table}

Next we devote to the error estimate of the FBDF22 scheme \eqref{eq:FBDF22}.

\begin{theorem}\label{th:err2}
  Assume that $u^0(x)\equiv0$ and $f(x,t)$ in \eqref{eq:tfdesub0} satisfies Assumption \ref{af}. Let $u_{h}(t)$ and $\tilde{u}_{h}^{n}$ be the solutions to \eqref{eq:sFEO} and \eqref{eq:FBDF22}, respectively. Then it holds that
  \begin{equation}
    \|u_{h}(t_{n})-\tilde{u}_{h}^{n}\|\leq c\big(t_{n}^{\alpha+\mu-2}\tau^{2}
    +t_{n}^{\alpha-3}\tau^{3+\mu}\big),\quad -1<\mu<0
  \end{equation}
  for $1\leq n\leq N$.
\end{theorem}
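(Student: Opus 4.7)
The plan is to adapt the Laplace-transform strategy used in Theorem \ref{le:81}, with the BE symbol $\tau^{-1}(1-z)$ replaced by the BDF2 symbol $\omega(z)=\tau^{-1}(\tfrac{3}{2}-2z+\tfrac{1}{2}z^{2})$, and with an additional $s^{-1}$ factor coming from the extra integration that defines $\tilde F_h$. Applying the Laplace transform to \eqref{eq:8.3}--\eqref{eq:8.4} with $u_h(0)=0$ and using $\widehat{\tilde F_h}(s)=s^{-1}\hat F_h(s)=s^{-2}\hat f_h(s)$, one obtains
\begin{equation*}
\omega(e^{-s\tau})\widehat{\tilde U_h}(s)=\bigl(\omega(e^{-s\tau})^{\alpha}-\Delta_h\bigr)^{-1}\omega(e^{-s\tau})^{2}s^{-2}\hat f_h(s),
\end{equation*}
so that $\tilde u_h^n$ admits a Bromwich representation analogous to \eqref{eq:bbuh}. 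Comparing it with the semidiscrete expression $u_h(t_n)=\frac{1}{2\pi i}\int_\Gamma e^{st_n}s\hat U_h(s)\,ds$ and disposing of the high-frequency tail exactly as in \eqref{eq:iut} reduces the problem to bounding a contour integral along $\Gamma_{\varepsilon,\tau}^{\theta}\cup S_{\varepsilon}$ together with aliasing contributions.

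Following the decomposition leading to \eqref{eq:gerror}, I would split the error as $u_h(t_n)-\tilde u_h^n=I_1+I_2+I_3$, where $I_1$ is the tail of the continuous contour outside $|\Im s|\le\pi/(\tau\sin\theta)$, $I_2$ is the kernel discrepancy on $\Gamma_{\varepsilon,\tau}^{\theta}\cup S_{\varepsilon}$, and $I_3$ is the aliasing sum over $p\ne 0$ of $\widehat{\tilde F_h}(s+i2\pi p/\tau)$. The crucial new ingredients, replacing the BE ones, are the second-order consistency bounds $|s-\omega(e^{-s\tau})|\le c\tau^{2}|s|^{3}$ and $|s^{\alpha}-\omega(e^{-s\tau})^{\alpha}|\le c\tau^{2}|s|^{2+\alpha}$ valid for $s$ enclosed by the usual contours, together with the sectoriality $|\omega(e^{-s\tau})|\sim|s|$ on $\Sigma_{\theta}$ for $\theta$ chosen inside the A$(\theta)$-stability sector of BDF2; these yield the BDF2 analogues of Lemmas \ref{le:g0} and \ref{le:45} invoked in the BE proof. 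The refined Laplace bound $\|\widehat{\tilde F_h}(s)\|\le c|s|^{-\mu-3}$ inherited from Assumption \ref{af} is what drives the improved aliasing estimate.

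Choosing $\varepsilon=t_n^{-1}$, I would then carry out the three bounds. For $I_1$, the integrand has size $|s|^{-\alpha-\mu-1}$, which on the tail may be bounded by $\tau^{2}|s|^{1-\alpha-\mu}$, yielding $\|I_1\|\le c\tau^{2}t_n^{\alpha+\mu-2}$. For $I_2$, writing the kernel difference as
\begin{equation*}
\bigl(s^{\alpha}-\Delta_h\bigr)^{-1}\Bigl[1-\frac{\omega(e^{-s\tau})^{2}}{s^{2}}\Bigr]+\Bigl[\bigl(s^{\alpha}-\Delta_h\bigr)^{-1}-\bigl(\omega(e^{-s\tau})^{\alpha}-\Delta_h\bigr)^{-1}\Bigr]\frac{\omega(e^{-s\tau})^{2}}{s^{2}},
\end{equation*}
each summand gains the factor $\tau^{2}|s|^{2}$ from the BDF2 consistency, so that after integration along $\Gamma_{\varepsilon,\tau}^{\theta}\cup S_{\varepsilon}$ one obtains $\|I_2\|\le c\tau^{2}t_n^{\alpha+\mu-2}$. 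For $I_3$, the $p$-shifted factor $\widehat{\tilde F_h}(s+i2\pi p/\tau)$ decays like $|s+i2\pi p/\tau|^{-\mu-3}$; summing the series $\sum_{p\ge 1}p^{-\mu-3}$ (convergent for $\mu>-2$) and integrating $|\omega(e^{-s\tau})|^{2-\alpha}$ along $\Gamma_{0,\tau}^{\theta}$ yields $\|I_3\|\le c t_n^{\alpha-3}\tau^{3+\mu}$. Combining these three contributions gives the asserted bound.

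The main obstacle I anticipate is the bookkeeping inside $I_2$: the factor $\omega(e^{-s\tau})^{2}/s^{2}$, stemming from the double integration of $f$, must be distributed between the resolvent difference and the algebraic multiplier so that the full second-order gain $\tau^{2}|s|^{2}$ is exposed without introducing spurious $\tau^{-1}$ factors or losing cancellations at the singular endpoint. A secondary subtlety is the choice of contour angle: since BDF2 is only A$(\theta)$-stable with $\theta$ strictly less than some threshold below $\pi/2$ of the negative real axis, the admissible $\theta$ and the contours $\Gamma_{\varepsilon,\tau}^{\theta}$, $\Gamma_{0,\tau}^{\theta}$ must be selected compatibly with this stability wedge, unlike the BE case where any $\theta\in(\pi/2,\pi)$ works.
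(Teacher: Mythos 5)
Your proposal follows essentially the same route as the paper's proof: the Laplace representation $\widehat{\tilde u_h}(s)=(\omega_2(e^{-s\tau})^{\alpha}-\Delta_h)^{-1}\omega_2(e^{-s\tau})^{2}\,\hat{\tilde F}_h(s)$ with $\|\hat{\tilde F}_h(s)\|\le c|s|^{-\mu-3}$, the same three-part contour decomposition into tail, kernel-discrepancy and aliasing terms, the second-order symbol estimates of Lemma \ref{le:46} and the sectoriality restriction on $\theta$ from Lemma \ref{le:fbdf2}, and the same three bounds $c\tau^{2}t_n^{\alpha+\mu-2}$, $c\tau^{2}t_n^{\alpha+\mu-2}$, $ct_n^{\alpha-3}\tau^{3+\mu}$. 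The only cosmetic difference is that you factor the $I_2$ kernel difference through $\omega_2^{2}/s^{2}$ rather than splitting off $(s-\omega_2)\hat U_h$ directly, which leads to the same estimates.
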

\begin{proof}
  Taking the Laplace transform on \eqref{eq:8.3} and \eqref{eq:8.4} yields
  \begin{equation}\label{eq:cUh}
    \widehat{\tilde{U}_{h}}(s)=\big(\omega_{2}(e^{-s\tau})^{\alpha}-\Delta_{h}\big)^{-1}
    \omega_{2}(e^{-s\tau})\big(\hat{\tilde{F}}_{h}(s)+s^{\alpha-3}u_{h}(0)\big)
  \end{equation}
  and $\widehat{\tilde{u}_{h}}(s)=\omega_{2}(e^{-s\tau})\widehat{\tilde{U}_{h}}(s)$,
  where $\omega_{2}(z)=\tau^{-1}(\frac{3}{2}-2z+\frac{1}{2}z^{2})$, and $\hat{\tilde{F}}_{h}(s)$ denotes the Laplace transform of $\tilde{F}_h(t)$.
  By the definition of $\tilde{F}_{h}$ satisfying \eqref{eq:f2}, it yields
  \begin{equation}\label{eq:tFs}
    \|\hat{\tilde{F}}_{h}(s)\|=|s|^{-2}\|\hat{f_{h}}(s)\|= |s|^{-2}\|P_{h}\hat{f}(s)\|\le |s|^{-2}\|\hat{f}(s)\|\le c|s|^{-\mu-3}.
  \end{equation}
  In analogy to \eqref{eq:gerror} in Theorem \ref{le:81}, we can obtain
  \begin{equation}\label{eq:gerror1}
    \begin{split}
      u_{h}(t_{n})-\tilde{u}_{h}^{n}&=\int_{\Gamma_{\varepsilon}^{\theta}\slash\Gamma_{\varepsilon,\tau}^{\theta}}e^{st_{n}}
      s\hat{U}_{h}(s)\mathrm{d}s+\int_{\Gamma_{\varepsilon,\tau}^{\theta}\cup S_{\varepsilon}}e^{s t_{n}}\left(s\hat{U}_{h}(s)-\omega_{2}(e^{-s\tau})\widehat{\tilde{U}_{h}}(s)\right)\mathrm{d}s\\
      &-\sum_{\substack{p=-\infty \\ p\ne 0}}^{+\infty}\int_{\Gamma_{0,\tau}^{\theta}}e^{st_{n} }\omega_{2}(e^{-s\tau})(\omega_{2}(e^{-s\tau})^{\alpha}-\Delta_{h})^{-1}
                       \omega_{2}(e^{-s\tau})\hat{\tilde{F}}_{h}(s+i2\pi p/\tau)\mathrm{d}s \\
                       &:=II_{1}+II_{2}+II_{3}.
    \end{split}
  \end{equation}
  For $t_{n}\geq \tau$, we get the estimate
  \begin{equation*}
    \|II_{1}\|\leq c\tau^{2}\int_{\frac{\pi}{\tau\sin\theta}}^{+\infty}
    e^{\rho t_{n}\cos\theta}\rho^{-\alpha-\mu+1}\mathrm{d}\rho\leq c t_{n}^{\alpha+\mu-2}\tau^{2}.
  \end{equation*}
  From Lemmas \ref{le:g0} and \ref{le:45}, it follows that
  \begin{equation*}
    \|(s^{\alpha}-\Delta_{h})^{-1}
    -\left(\omega_{2}(e^{-s\tau})^{\alpha}-\Delta_{h}\right)^{-1}\|
    \leq c\tau^{2}|s|^{2}|\omega_{2}(e^{-s\tau})|^{-\alpha}
  \end{equation*}
  for $s$ enclosed by curves $\Gamma_{0,\tau}^{\theta}$, $\Im(s)=\pm \pi/\tau$ and $\Gamma$. Furthermore,  we can obtain
  \begin{align*}
    &\|(s^{\alpha}-\Delta_{h})^{-1}s
      -\left(\omega_{2}(e^{-s\tau})^{\alpha}-\Delta_{h}\right)^{-1}\omega_{2}(e^{-s\tau})\|\\
    &\le\|(s^{\alpha}-\Delta_{h})^{-1}(s-\omega_{2}(e^{-s\tau})\|+\|\big((s^{\alpha}-\Delta_{h})^{-1}
      -\left(\omega_{2}(e^{-s\tau})^{\alpha}-\Delta_{h}\right)^{-1}\big)\omega_{2}(e^{-s\tau})\|\\
    &\leq c\tau^{2}\big(|s|^{3-\alpha}+|\omega_{2}(e^{-s\tau})|^{1-\alpha}|s|^2\big).
  \end{align*}
  Then we have
  \begin{equation*}
    \begin{split}
      \|II_{2}\|&\leq\|\int_{\Gamma_{\varepsilon,\tau}^{\theta}\cup S_{\varepsilon}}e^{s t_{n}}\left(s-\omega_{2}(e^{-s\tau})\right)\hat{U}_{h}(s)\mathrm{d}s\|
      +\|\int_{\Gamma_{\varepsilon,\tau}^{\theta}\cup S_{\varepsilon}}e^{s t_{n}}\omega_{2}(e^{-s\tau})\left(\hat{U}_{h}(s)-\widehat{\tilde{U}_{h}}(s)\right)\mathrm{d}s\|\\ &\leq c\tau^{2}\int_{\Gamma_{\varepsilon, \tau}^{\theta}\cup S_{\varepsilon}}|e^{st_{n}}|\left(|s|^{1-\alpha-\mu}+|\omega_{2}(e^{-s\tau})|^{2-\alpha}
      |s|^{-\mu-1}\right)|\mathrm{d}s| \\
          &\leq c t_{n}^{\alpha+\mu-2}\tau^{2}.
    \end{split}
  \end{equation*}
  In addition, it follows from Lemmas \ref{le:46} and \ref{le:fbdf2} that
  \begin{equation*}
    \begin{split}
      \|II_{3}\|&\leq c\sum_{p=1}^{+\infty}\int_{\Gamma_{0,\tau}^{\theta}}|e^{st_n}||\omega_2(e^{-s\tau})|^{2-\alpha}|s+i2p\pi/\tau|^{-\mu-3}|\mathrm{d}s|
                  \\
                 &\leq c \tau^{3+\mu}\sum_{p=1}^{+\infty}p^{-\mu-3}\int_{0}^{\frac{\pi}{\tau\sin\theta}}e^{ \rho t_n\cos\theta}\rho^{2-\alpha}\mathrm{d}\rho   \\
                 &\leq c t_n^{\alpha-3}\tau^{3+\mu}.
    \end{split}
  \end{equation*}
  This completes the proof.
\end{proof}

The error estimate of the FBDF22 scheme \eqref{eq:FBDF22} with $u^0(x)\in L^2(\Omega)$ and $f(x,t)\equiv0$ can also be derived by the similar approach as the proof of Theorem \ref{th:err2} just replacing $\hat{\tilde{F}}_h(s)$ by $s^{\alpha-3}u_h(0)$.

\begin{theorem}\label{thm:err3}
  Assume $u^0(x)\in L^2(\Omega)$ and $f(x,t)\equiv0$ in \eqref{eq:tfdesub0}-\eqref{eq:ibconditions}. Let $u_{h}$ and $\tilde{u}_{h}^n$ be the solutions to \eqref{eq:sFEO} and the FBDF22 scheme \eqref{eq:FBDF22}, respectively. Then we have
  \begin{equation}
    \|u_{h}(t_{n})-\tilde{u}_{h}^{n}\|\leq c\left(t_{n}^{-2}\tau^{2}+ t_{n}^{\alpha-3}\tau^{3-\alpha}\right)\|u^{0}\|,\quad 1\le n\le N.
  \end{equation}
  Furthermore, if $u^0(x)\in L^2(\Omega)$ and $f(x,t)$ satisfies Assumption \ref{af}, then it holds
  \begin{equation*}
    \|u_{h}(t_{n})-\tilde{u}_{h}^{n}\|\leq c\left((t_{n}^{\alpha+\mu-2}+t_{n}^{-2}\|u^{0}\|)\tau^2 +t_{n}^{\alpha-3}\tau^{3+\mu}+\|u^{0}\|t_{n}^{\alpha-3}\tau^{3-\alpha}\right).
  \end{equation*}
\end{theorem}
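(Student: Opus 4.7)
The plan is to exploit linearity and reduce to the pure initial-value case, then mirror the contour-integral argument of Theorem \ref{th:err2} with the source-term Laplace transform replaced by the initial-value transform. Concretely, by linearity of both the semidiscrete equation \eqref{eq:sFEO} and the FBDF22 scheme \eqref{eq:FBDF22}, I write $u_h=u_h^{(1)}+u_h^{(2)}$ where $u_h^{(1)}$ corresponds to the data $(u^0,0)$ and $u_h^{(2)}$ to $(0,f)$, and analogously for $\tilde u_h^n$. Since the $(0,f)$ component is already controlled by Theorem \ref{th:err2}, the triangle inequality reduces the second (combined) assertion to the first (pure initial-value) assertion. Thus the real work is the case $u^0\in L^2(\Omega)$, $f\equiv 0$.

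For that case I take the Laplace transform of \eqref{eq:sFEO} with $f\equiv 0$ and of \eqref{eq:8.3}--\eqref{eq:8.4} with $\tilde F_h\equiv 0$ to obtain
\begin{equation*}
\hat u_h(s)=s^{\alpha-1}(s^\alpha-\Delta_h)^{-1}u_h(0),\qquad \widehat{\tilde u_h}(s)=\omega_2(e^{-s\tau})^2\, s^{\alpha-3}\bigl(\omega_2(e^{-s\tau})^\alpha-\Delta_h\bigr)^{-1}u_h(0).
\end{equation*}
In the notation of \eqref{eq:hatUh} and \eqref{eq:cUh} this amounts to replacing $\hat F_h(s)$ by $s^{\alpha-2}u_h(0)$ and $\hat{\tilde F}_h(s)$ by $s^{\alpha-3}u_h(0)$. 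The resolvent estimate $\|(s^\alpha-\Delta_h)^{-1}\|\le M|s|^{-\alpha}$ then gives $\|s\hat U_h(s)\|\le c|s|^{-1}\|u^0\|$, which is the analog of the size bound $c|s|^{-\alpha-\mu-1}$ used in the proof of Theorem \ref{th:err2}. Formally, the substitution rule is $\mu\mapsto -\alpha$ together with a trailing $\|u^0\|$.

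Next I repeat the contour-integral decomposition verbatim: push the Bromwich contour to $\Gamma^\theta_\varepsilon\cup S_\varepsilon$ with $\varepsilon=t_n^{-1}$, use the $2\pi i/\tau$-periodicity of $\omega_2(e^{-s\tau})$ to sum over the shifts $s\mapsto s+i2p\pi/\tau$, and split the error as $u_h(t_n)-\tilde u_h^n=II_1+II_2+II_3$ exactly as in \eqref{eq:gerror1}. For $II_1$ and $II_2$, the ingredients $|s-\omega_2(e^{-s\tau})|\le c\tau^2|s|^3$ and $\|(s^\alpha-\Delta_h)^{-1}-(\omega_2(e^{-s\tau})^\alpha-\Delta_h)^{-1}\|\le c\tau^2|s|^2|\omega_2(e^{-s\tau})|^{-\alpha}$ from Lemmas \ref{le:g0} and \ref{le:45}, together with $|\omega_2(e^{-s\tau})|\sim|s|$ on $\Gamma^\theta_{\varepsilon,\tau}$, yield pointwise integrand bounds of order $c\tau^2|s|\|u^0\|$. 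Combined with the inequality $\rho^{-1}\le c\tau^2\rho$ on the tail $\rho\ge\pi/(\tau\sin\theta)$ used in $II_1$ and direct integration on $\Gamma^\theta_{\varepsilon,\tau}\cup S_\varepsilon$ in $II_2$, both terms are bounded by $ct_n^{-2}\tau^2\|u^0\|$, consistent with the formal substitution $\mu=-\alpha$ in the bound $ct_n^{\alpha+\mu-2}\tau^2$ of Theorem \ref{th:err2}.

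The one place that requires a fresh (if mild) check, and which I expect to be the main obstacle, is the periodic-shift term $II_3$: each summand now carries the factor $|s+i2p\pi/\tau|^{\alpha-3}$ instead of $|s+i2p\pi/\tau|^{-\mu-3}$. For $p\ne 0$ and $s\in\Gamma^\theta_{0,\tau}$ one has $|s+i2p\pi/\tau|\ge c|p|/\tau$, giving the factor $c(\tau/|p|)^{3-\alpha}$; critically, the tail sum $\sum_{p\ge 1}p^{\alpha-3}$ converges because $\alpha<1$ forces $\alpha-3<-2$. The remaining integral $\int_0^{\pi/(\tau\sin\theta)}e^{\rho t_n\cos\theta}\rho^{2-\alpha}\,\mathrm d\rho$, bounded via Lemma \ref{le:fbdf2} just as in Theorem \ref{th:err2}, contributes $ct_n^{\alpha-3}$. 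Collecting gives $\|II_3\|\le ct_n^{\alpha-3}\tau^{3-\alpha}\|u^0\|$, yielding the pure initial-value estimate; adding Theorem \ref{th:err2} closes the combined case.
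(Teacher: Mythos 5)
Your proposal is correct and follows essentially the same route as the paper: the paper itself proves this theorem only by remarking that one repeats the argument of Theorem \ref{th:err2} with $\hat{\tilde{F}}_h(s)$ replaced by $s^{\alpha-3}u_h(0)$ (formally $\mu\mapsto-\alpha$), and your reduction by linearity, the contour decomposition into $II_1,II_2,II_3$, and the check that $\sum_{p\ge1}p^{\alpha-3}$ converges since $\alpha<1$ are exactly the details that substitution entails. The resulting bounds $ct_n^{-2}\tau^2\|u^0\|$ and $ct_n^{\alpha-3}\tau^{3-\alpha}\|u^0\|$ match the stated estimate.
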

\begin{remark}
  The error estimates in Theorems \ref{th:err2} and \ref{thm:err3} show that the convergence rate of the FBDF22 scheme \eqref{eq:FBDF22} depends on the parameters $\alpha$ and $\mu$, and is second-order when $-1<\mu<0$. Additionally, the estimate is also valid for $\mu\ge0$.
\end{remark}

\section{Numerical examples}\label{sec:5}
In this section, we report some numerical results to verify the convergence rates of the semidiscrete FEM and fully discrete schemes in Sections \ref{sec:3} and \ref{sec:4}.
\subsection{Numerical results by semidiscrete FEM}
In this subsection, we present two numerical examples by the lumped mass FEM to illustrate the theoretical convergence results in Section \ref{sec:3}, where it shows that the convergence rate of the Galerkin FEM is the same as that of the lumped mass FEM if the mesh is symmetric. Since the exact solutions are unknown, we apply the following formula to calculate the convergence rate
\begin{equation*}
  \text{rate}= \log_4(\|\bar{u}_{2h}(t)-\bar{u}_{h}(t)\|/\|\bar{u}_{h}(t)-\bar{u}_{h/2}(t)\|).
\end{equation*}
\begin{example}\label{exm:s1}
  Let $T=1$ and $\Omega=(0,1)$. Consider the one dimensional problem \eqref{eq:tfdesub0}-\eqref{eq:ibconditions} with $u^0(x)\equiv0$ and $f(x,t)=(1+t^{\mu})x^{-\frac{1}{4}}$, where $-1<\mu<0$.
\end{example}

To generate the finite element discretization, the interval $\Omega=(0,1)$ is equally divided into $M$ subintervals with a mesh size $h=1/M$. As mentioned in \cite{JinLZ:2013}, the eigenvalues and eigenfunctions $(\bar{\lambda}_k^h,\bar{\varphi}_k^h(x))_{k=1}^{M-1}$ of the corresponding one dimensional discrete Laplacian $-\bar{\Delta}_h$ defined by \eqref{eq:bDeltah} satisfies $(-\bar{\Delta}_h\bar{\varphi}_k^h,\psi)_h=\bar{\lambda}_k^h(\bar{\varphi}_k^h,\psi)_h, ~\forall~\psi\in X_h$, and
\begin{equation*}
  \bar{\lambda}_k^h=\frac{4}{h^2}\sin^2\frac{\pi k}{2M},\quad
  \bar{\varphi}_k^h(x_i)=\sqrt{2}\sin(k\pi x_i),\quad k=1,\cdots,M-1,
\end{equation*}
where $x_i$ is a mesh point. Then, the solution to the lumped mass FEM scheme \eqref{eq:slmFE} with $u^0(x)\equiv0$ and $f(x,t)=t^{\mu}g(x)$ can be represented as
\begin{equation*}
  \bar{u}_h(t)=\sum_{k=1}^{M-1}(\bar{P}_h g,\bar{\varphi}_k^h)_h\bar{\varphi}_k^h
  \int_0^t(t-s)^{\alpha-1}
  E_{\alpha,\alpha}(-\bar{\lambda}_k^h(t-s)^{\alpha})s^{\mu}\mathrm{d}s,
\end{equation*}
where $E_{\alpha,\beta}(x)$ denotes the Mittag-Leffler function, which can be evaluated by the algorithm developed by \cite{Garrappa:2015}.

\begin{table}[ht]
\centering
\caption{Errors and convergence rates by the scheme \eqref{eq:slmFE} for Example \ref{exm:s1}.}\label{tab:s1}%\vskip-5pt
\setlength{\tabcolsep}{2.5pt}
\begin{tabular}{|c|c|ccccc|c|}
  \hline
  $\alpha$ & $\mu$ & $h=1/16$ & 1/32 & 1/64 & 1/128 & 1/256  & rate  \\
  \hline
  0.1 & -0.1 &  2.84935E-03 & 7.12046E-04 & 1.76860E-04 & 4.37131E-05 &  1.07539E-05  & 2.01  \\
       & -0.5 &  2.85305E-03 & 7.12874E-04 & 1.77054E-04 & 4.37589E-05 &  1.07647E-05  & 2.01  \\
       & -0.9 &  2.87807E-03 & 7.18463E-04 & 1.78363E-04 & 4.40686E-05 &  1.08378E-05  & 2.01  \\
    \hline
  0.5 & -0.1 &  2.87595E-03 & 7.18032E-04 & 1.78268E-04 & 4.40474E-05 &  1.08331E-05  & 2.01  \\
       & -0.5 &  2.89008E-03 & 7.21148E-04 & 1.78992E-04 & 4.42177E-05 &  1.08730E-05  & 2.01  \\
       & -0.9 &  2.96267E-03 & 7.37084E-04 & 1.82691E-04 & 4.50867E-05 &  1.10767E-05  & 2.02  \\
    \hline
  0.9 & -0.1 &  2.90264E-03 & 7.23869E-04 & 1.79618E-04 & 4.43634E-05 &  1.09068E-05  & 2.01  \\
       & -0.5 &  2.90927E-03 & 7.25113E-04 & 1.79876E-04 & 4.44175E-05 &  1.09178E-05  & 2.01  \\
       & -0.9 &  2.89900E-03 & 7.21658E-04 & 1.78908E-04 & 4.41565E-05 &  1.08482E-05  & 2.02  \\
    \hline
\end{tabular}
\end{table}

Table \ref{tab:s1} presents convergence rates by the lumped mass FEM scheme \eqref{eq:slmFE} for Example \ref{exm:s1}. Second-order accuracy is observed, which is consistent with the theoretical estimate \eqref{eq:errslmFEM1} in Theorem \ref{thm:errslmFE}.

\begin{example}\label{exm:s2}
  Let $T=1$ and $\Omega=(0,1)^2$. Consider the two dimensional problem \eqref{eq:tfdesub0}-\eqref{eq:ibconditions} with $u^0(x)\equiv0$ and $f(x,t)=(1+t^{\mu})\chi_{[\frac{1}{4},\frac{3}{4}]\times[\frac{1}{4},\frac{3}{4}]}(x)$, where $-1<\mu<0$ and $\chi_{[\frac{1}{4},\frac{3}{4}]\times[\frac{1}{4},\frac{3}{4}]}(x)$ is the indicator function over $[\frac{1}{4},\frac{3}{4}]\times[\frac{1}{4},\frac{3}{4}]$.
\end{example}

We partition the domain $\Omega=(0,1)^2$ by a uniform symmetric triangulation mesh, where the boundary of $\Omega$ is equally divided into $M$ subintervals with a size $h=1/M$. Then the convergence rates of the Galerkin and lumped mass FEMs are the same. We know from \cite{JinLPZ:2015} that the eigenpairs $(\bar{\lambda}_{n,m}^h,\bar{\varphi}_{n,m}^h(x))_{n,m=1}^{M-1}$ of the corresponding two dimensional discrete Laplacian $-\bar{\Delta}_h$ defined by \eqref{eq:bDeltah} satisfies $(-\bar{\Delta}_h\bar{\varphi}_{n,m}^h,\psi)_h=\bar{\lambda}_{n,m}^h(\bar{\varphi}_{n,m}^h,\psi)_h, ~\forall~\psi\in X_h$, and
\begin{equation*}
  \bar{\lambda}_{n,m}^h=\frac{4}{h^2}\big(\sin^2\frac{n\pi}{2M}+\sin^2\frac{m\pi}{2M}\big),\quad
  \bar{\varphi}_{n,m}^h(x_i,y_k)=2\sin(n\pi x_i)\sin(m\pi y_k)
\end{equation*}
for $n,m=1,\cdots,M-1$, where $(x_i,y_k)$ is a mesh point.
In addition, the approximate solution by the lumped mass FEM scheme \eqref{eq:slmFE} in two dimensional case with $u^0(x)\equiv0$ and $f(x,t)=t^{\mu}g(x)$ can be obtained by
\begin{equation*}
  \bar{u}_h(t)=\sum_{n,m=1}^{M-1}(\bar{P}_hg,\bar{\varphi}_{n,m}^h)_h\bar{\varphi}_{n,m}^h
  \int_0^t(t-s)^{\alpha-1}
  E_{\alpha,\alpha}(-\bar{\lambda}_{n,m}^h(t-s)^{\alpha})s^{\mu}\mathrm{d}s.
\end{equation*}

\begin{table}[ht]
\centering
\caption{Errors and convergence rates by the scheme \eqref{eq:slmFE} for Example \ref{exm:s2}.}\label{tab:s2}%\vskip-5pt
\setlength{\tabcolsep}{2.5pt}
\begin{tabular}{|c|c|ccccc|c|}
  \hline
  $\alpha$ & $\mu$ & $h=1/16$ & 1/32 & 1/64 & 1/128 & 1/256  & rate  \\
  \hline
  0.1 & -0.1 &  1.49059E-03 & 3.85242E-04 & 9.72914E-05 & 2.43964E-05 &  6.10447E-06  & 1.98  \\
       & -0.5 &  1.49322E-03 & 3.85890E-04 & 9.74526E-05 & 2.44366E-05 &  6.11452E-06  & 1.98  \\
       & -0.9 &  1.51096E-03 & 3.90269E-04 & 9.85416E-05 & 2.47084E-05 &  6.18242E-06  & 1.98  \\
    \hline
  0.5 & -0.1 &  1.50830E-03 & 3.89613E-04 & 9.83783E-05 & 2.46676E-05 &  6.17224E-06  & 1.98  \\
       & -0.5 &  1.51937E-03 & 3.92345E-04 & 9.90577E-05 & 2.48372E-05 &  6.21461E-06  & 1.98  \\
       & -0.9 &  1.57967E-03 & 4.07238E-04 & 1.02763E-04 & 2.57618E-05 &  6.44563E-06  & 1.98  \\
    \hline
  0.9 & -0.1 &  1.53039E-03 & 3.95065E-04 & 9.97343E-05 & 2.50060E-05 &  6.25679E-06  & 1.98  \\
       & -0.5 &  1.54219E-03 & 3.97978E-04 & 1.00459E-04 & 2.51868E-05 &  6.30197E-06  & 1.98  \\
       & -0.9 &  1.57305E-03 & 4.05591E-04 & 1.02352E-04 & 2.56593E-05 &  6.42003E-06  & 1.98  \\
    \hline
\end{tabular}
\end{table}

In Table \ref{tab:s2}, the convergence rates obtained by the lumped mass FEM scheme \eqref{eq:slmFE} for Example \ref{exm:s2} are shown. It shows the second-order accuracy of the semi-discrete scheme \eqref{eq:slmFE} with symmetric finite element mesh as predicted in the estimate \eqref{eq:errslmFEM1} in Theorem \ref{thm:errslmFE}.

\subsection{Numerical results by fully discrete schemes}
In this subsection, two numerical examples are presented to verify the theoretical results of two fully discrete schemes in Section \ref{sec:4}.
The numerical results are obtained by the GLBE scheme \eqref{eq:GLBE} and the FBDF22 scheme \eqref{eq:FBDF22}. In the following numerical examples, the exact solutions are unknown, then a reference solution obtained with very small time step size is utilized to evaluate the error $e_h^{\tau}:=\|\tilde{u}_{h,\tau}^N-u(T)\|_{L^2(\Omega)}$, where $\tilde{u}_{h,\tau}^N$ represents the numerical solutions at time $T$ by the fully discrete schemes with the time step size $\tau$ and spatial mesh size $h$.
Then the convergence orders of the two schemes can be verified by the formula
$\log_2(|e_{h}^{2\tau}|/|e_{h}^{\tau}|)$.

\begin{example}\label{exm:f1}
  Let $T=1$ and $\Omega=(0,1)$. Consider the one dimensional problem \eqref{eq:tfdesub0}-\eqref{eq:ibconditions} with the following data:
  \begin{itemize}
    \item[(a)] $u^0(x)\equiv0$ and $f(x,t)=t^{\mu}x^{-\frac{1}{4}}$ with $-1<\mu<0$;
    \item[(b)] $u^0(x)=\chi_{[\frac{1}{4},\frac{3}{4}]}(x)$ and $f(x,t)\equiv0$, where $\chi_{[\frac{1}{4},\frac{3}{4}]}(x)$ is the indicator function over $[\frac{1}{4},\frac{3}{4}]$.
  \end{itemize}
\end{example}

\begin{table}[htb!]
  \centering
  \caption{Errors and convergence rates for case (a) of Example \ref{exm:f1}.}\label{tab:f1a}%\vskip-5pt
  \setlength{\tabcolsep}{4.5pt}\small%\renewcommand\arraystretch{0.9}
  \begin{tabular}{|c|c|c|cccc|c|}
    \cline{1-8}
    Method & $\alpha$ & $\mu$ & $\tau=$1/40 & 1/80 & 1/160 & 1/320 & rate  \\
    \cline{1-8}
    GLBE   & 0.1 & -0.1 & 1.1513E-04 & 5.7668E-05 & 2.8683E-05 & 1.4133E-05 & 1.01 (1.00)  \\
           & 0.1 & -0.5 & 6.2900E-04 & 3.1575E-04 & 1.5719E-04 & 7.7483E-05 & 1.01 (1.00)  \\
           & 0.1 & -0.9 & 1.2347E-03 & 6.2138E-04 & 3.0966E-04 & 1.5269E-04 & 1.01 (1.00)  \\
    \cline{2-8}
           & 0.5 & -0.1 & 7.6705E-05 & 3.8558E-05 & 1.9212E-05 & 9.4743E-06 & 1.01 (1.00)  \\
           & 0.5 & -0.5 & 6.7565E-04 & 3.3928E-04 & 1.6893E-04 & 8.3269E-05 & 1.01 (1.00)  \\
           & 0.5 & -0.9 & 1.9317E-03 & 9.7039E-04 & 4.8314E-04 & 2.3814E-04 & 1.01 (1.00)  \\
    \cline{2-8}
           & 0.9 & -0.1 & 1.1910E-04 & 6.0162E-05 & 3.0042E-05 & 1.4830E-05 & 1.00 (1.00)  \\
           & 0.9 & -0.5 & 9.4222E-04 & 4.7128E-04 & 2.3418E-04 & 1.1531E-04 & 1.01 (1.00)  \\
           & 0.9 & -0.9 & 2.9369E-03 & 1.4524E-03 & 7.1750E-04 & 3.5228E-04 & 1.02 (1.00)  \\
    \cline{1-8}
    FBDF22 & 0.1 & -0.1 & 4.4149E-06 & 1.0789E-06 & 2.6352E-07 & 6.1982E-08 & 2.05 (2.00)  \\
           & 0.1 & -0.5 & 3.3506E-05 & 8.1754E-06 & 2.0116E-06 & 4.9145E-07 & 2.03 (2.00)  \\
           & 0.1 & -0.9 & 8.5065E-05 & 2.0738E-05 & 5.1534E-06 & 1.3195E-06 & 2.00 (2.00)  \\
    \cline{2-8}
           & 0.5 & -0.1 & 2.4546E-06 & 6.0433E-07 & 1.4965E-07 & 3.6935E-08 & 2.02 (2.00)  \\
           & 0.5 & -0.5 & 3.5840E-05 & 8.7531E-06 & 2.1618E-06 & 5.3632E-07 & 2.02 (2.00)  \\
           & 0.5 & -0.9 & 1.3690E-04 & 3.3254E-05 & 8.1879E-06 & 2.0308E-06 & 2.02 (2.00)  \\
    \cline{2-8}
           & 0.9 & -0.1 & 3.7018E-06 & 9.1485E-07 & 2.2762E-07 & 5.6738E-08 & 2.01 (2.00)  \\
           & 0.9 & -0.5 & 5.3813E-05 & 1.3104E-05 & 3.2321E-06 & 8.0203E-07 & 2.02 (2.00)  \\
           & 0.9 & -0.9 & 2.3420E-04 & 5.6570E-05 & 1.3878E-05 & 3.4321E-06 & 2.03 (2.00)  \\
    \cline{1-8}
  \end{tabular}
\end{table}

\begin{table}[htb!]
\centering
\caption{Errors and convergence rates for case (b) of Example \ref{exm:f1}.}\label{tab:f1b}%\vskip-5pt
\setlength{\tabcolsep}{6.0pt}\small
  \begin{tabular}{|c|c|cccc|c|}
    \cline{1-7}
    Method & $\alpha$ & $\tau=$1/40 & 1/80 & 1/160 & 1/320 & rate  \\
    \cline{1-7}
    GLBE   & 0.1 & 6.1206E-05 & 3.0657E-05 & 1.5248E-05 & 7.5128E-06 & 1.01 (1.00)  \\
           & 0.5 & 2.1663E-04 & 1.0878E-04 & 5.4162E-05 & 2.6698E-05 & 1.01 (1.00)  \\
           & 0.9 & 1.7560E-04 & 8.6843E-05 & 4.2901E-05 & 2.1063E-05 & 1.02 (1.00)  \\
    \cline{1-7}
    FBDF22 & 0.1 & 2.3469E-06 & 5.7351E-07 & 1.4007E-07 & 3.2928E-08 & 2.05 (2.00)  \\
           & 0.5 & 1.1491E-05 & 2.8066E-06 & 6.9333E-07 & 1.7217E-07 & 2.02 (2.00)  \\
           & 0.9 & 1.4004E-05 & 3.3826E-06 & 8.2982E-07 & 2.0523E-07 & 2.03 (2.00)  \\
    \cline{1-7}
\end{tabular}
\end{table}

The spatial interval $\Omega=(0,1)$ in Example \ref{exm:f1} is equally divided into subintervals with a mesh size $h=1/128$ for the finite element discretization.
The reference solution is obtained with a time step size $\tau=1/(10\times2^{10})$.
In Table \ref{tab:f1a}, the errors and convergence rates of the GLBE and FBDF22 schemes for case (a) of Example \ref{exm:f1} are presented with $\alpha=0.1,0.5,0.9$ and $\mu=-0.1,-0.5,-0.9$. From the results, we observe that the proposed GLBE scheme converges with rate $O(\tau)$ and the FBDF22 scheme exhibits convergence rate of $O(\tau^{2})$ for $\mu\in(-1,0)$. These are consistent with our theoretical analyses and show the effectiveness of the schemes for solving the problem \eqref{eq:tfdesub0}-\eqref{eq:ibconditions} with the singular source term $f(x,t)$.
In Table \ref{tab:f1b}, we list the errors and convergence rates of the GLBE and FBDF22 schemes for case (b) of Example \ref{exm:f1} with $\alpha=0.1,0.5,0.9$, which verify the theoretical results for the two schemes as well.
\begin{example}\label{exm:f2}
  Let $T=1$ and $\Omega=(0,1)^2$. Consider the two dimensional problem \eqref{eq:tfdesub0}-\eqref{eq:ibconditions} with the following data:
  \begin{itemize}
    \item[(a)] $u^0(x)\equiv0$ and $f(x,t)=(1+t^{\mu})\chi_{[\frac{1}{4},\frac{3}{4}]\times[\frac{1}{4},\frac{3}{4}]}(x)$, where $-1<\mu<0$ and $\chi_{[\frac{1}{4},\frac{3}{4}]\times[\frac{1}{4},\frac{3}{4}]}(x)$ is the indicator function over $[\frac{1}{4},\frac{3}{4}]\times[\frac{1}{4},\frac{3}{4}]$.
    \item[(b)] $u^0(x)=\chi_{[\frac{1}{4},\frac{3}{4}]\times[\frac{1}{4},\frac{3}{4}]}(x)$ and $f(x,t)\equiv0$.
  \end{itemize}
\end{example}

\begin{table}[htb!]
\centering
\caption{Errors and convergence rates for case (a) of Example \ref{exm:f2}.}\label{tab:f2a}%\vskip-5pt
  \setlength{\tabcolsep}{4.5pt}\small
  \begin{tabular}{|c|c|c|cccc|c|}
    \cline{1-8}
    Method & $\alpha$ & $\mu$ & $\tau=$1/80 & 1/160 & 1/320 & 1/640 & rate  \\    \cline{1-8}
    GLBE   & 0.2 & -0.2 & 1.9249E-06 & 9.5759E-07 & 4.7187E-07 & 2.2852E-07 & 1.02 (1.00)  \\
           & 0.2 & -0.5 & 4.8396E-06 & 2.4092E-06 & 1.1875E-06 & 5.7515E-07 & 1.02 (1.00)  \\
           & 0.2 & -0.8 & 7.7741E-06 & 3.8729E-06 & 1.9096E-06 & 9.2496E-07 & 1.02 (1.00)  \\
    \cline{2-8}
           & 0.5 & -0.2 & 1.9130E-06 & 9.5166E-07 & 4.6894E-07 & 2.2709E-07 & 1.02 (1.00)  \\
           & 0.5 & -0.5 & 4.8520E-06 & 2.4154E-06 & 1.1905E-06 & 5.7658E-07 & 1.02 (1.00)  \\
           & 0.5 & -0.8 & 7.8591E-06 & 3.9151E-06 & 1.9303E-06 & 9.3499E-07 & 1.02 (1.00)  \\
    \cline{2-8}
           & 0.8 & -0.2 & 1.9364E-06 & 9.6328E-07 & 4.7463E-07 & 2.2982E-07 & 1.02 (1.00)  \\
           & 0.8 & -0.5 & 4.9017E-06 & 2.4400E-06 & 1.2026E-06 & 5.8237E-07 & 1.02 (1.00)  \\
           & 0.8 & -0.8 & 7.9375E-06 & 3.9538E-06 & 1.9493E-06 & 9.4410E-07 & 1.02 (1.00)  \\
    \cline{1-8}
    FBDF22 & 0.2 & -0.2 & 3.9656E-08 & 9.6940E-09 & 2.2927E-09 & 4.5475E-10 & 2.15 (2.00)  \\
           & 0.2 & -0.5 & 1.2548E-07 & 3.0819E-08 & 7.4747E-09 & 1.6791E-09 & 2.07 (2.00)  \\
           & 0.2 & -0.8 & 2.4331E-07 & 5.9928E-08 & 1.4778E-08 & 3.5753E-09 & 2.03 (2.00)  \\
    \cline{2-8}
           & 0.5 & -0.2 & 3.9386E-08 & 9.6692E-09 & 2.3283E-09 & 5.0521E-10 & 2.09 (2.00)  \\
           & 0.5 & -0.5 & 1.2594E-07 & 3.1005E-08 & 7.5927E-09 & 1.7802E-09 & 2.05 (2.00)  \\
           & 0.5 & -0.8 & 2.4631E-07 & 6.0676E-08 & 1.4974E-08 & 3.6349E-09 & 2.03 (2.00)  \\
    \cline{2-8}
           & 0.8 & -0.2 & 3.9989E-08 & 9.8822E-09 & 2.4450E-09 & 5.9732E-10 & 2.02 (2.00)  \\
           & 0.8 & -0.5 & 1.2752E-07 & 3.1490E-08 & 7.8090E-09 & 1.9293E-09 & 2.02 (2.00)  \\
           & 0.8 & -0.8 & 2.4921E-07 & 6.1460E-08 & 1.5239E-08 & 3.7724E-09 & 2.02 (2.00)  \\
    \cline{1-8}
  \end{tabular}
\end{table}

\begin{table}[htb!]
\centering
\caption{Errors and convergence rates for case (b) of Example \ref{exm:f2}.}\label{tab:f2b}%\vskip-5pt
\setlength{\tabcolsep}{6.0pt}\small
  \begin{tabular}{|c|c|cccc|c|}
    \cline{1-7}
    Method & $\alpha$ & $\tau=$1/80 & 1/160 & 1/320 & 1/640 & rate  \\
    \cline{1-7}
    GLBE   & 0.2 & 1.6614E-06 & 8.2643E-07 & 4.0719E-07 & 1.9715E-07 & 1.03 (1.00)  \\
           & 0.5 & 2.7484E-06 & 1.3682E-06 & 6.7433E-07 & 3.2655E-07 & 1.02 (1.00)  \\
           & 0.8 & 1.7322E-06 & 8.6283E-07 & 4.2539E-07 & 2.0603E-07 & 1.02 (1.00)  \\
    \cline{1-7}
    FBDF22 & 0.2 & 3.4380E-08 & 8.5296E-09 & 2.1440E-09 & 5.5712E-10 & 1.98 (2.00)  \\
           & 0.5 & 7.1369E-08 & 1.7630E-08 & 4.3770E-09 & 1.0865E-09 & 2.01 (2.00)  \\
           & 0.8 & 5.4360E-08 & 1.3405E-08 & 3.3225E-09 & 8.2119E-10 & 2.02 (2.00)  \\
    \cline{1-7}
  \end{tabular}
\end{table}

For the finite element approximation, the domain $\Omega=(0,1)^2$ in Example \ref{exm:f2} is uniformly partitioned into triangles with the mesh size $h=1/128$.
The reference solution is obtained with a time step size $\tau=1/(10\times2^{10})$.
In Tables \ref{tab:f2a}-\ref{tab:f2b}, the errors and convergence rates of the GLBE and FBDF22 schemes for cases (a) and (b) of Example \ref{exm:f2} are shown, respectively. It reveals that the proposed GLBE and FBDF22 schemes perform effectively and converge numerically by the theoretical rates for the problem \eqref{eq:tfdesub0}-\eqref{eq:ibconditions} with the singular source term $f(x,t)$.

\section{Conclusions}
In this paper, we investigate the numerical discretization of sub-diffusion equations with certain type of singular source terms, for which the existing time-stepping schemes lost their optimal convergence order far below one. We first discuss the well-posedness of solutions to inhomogeneous problems with zero initial value. Furthermore, we construct the spatially semidiscrete schemes using linear FEM and lumped mass FEM. In terms of discretizations in time, two fully discrete schemes for the problem, namely GLBE and FBDF22 schemes are proposed and discussed in details, which have first- and second-order accuracy in time, respectively. In addition, we develop the Laplace transform technique to establish the error estimates both in space and time.

%Based on the work of this paper, we will further investigate more accurate numerical discrete methods and their extensions to other problems in the future, which can overcome the influence of singular source terms on numerical approximation results.

\appendix
\section{Proofs of Theorems \ref{thm:errsFE} and \ref{thm:errslmFE}}
\subsection{Proof of Theorem \ref{thm:errsFE}}\label{sec:app1}
\begin{proof}[Proof of Theorem \ref{thm:errsFE}]
  We obtain from \eqref{eq:tildeu} and \eqref{eq:uh} that
  \begin{equation}
    u(t)-u_h(t)=\frac{1}{2\pi i}\int_{\Gamma_{\varepsilon}^{\theta}\cup S_{\varepsilon}}
    e^{st}\hat{G}_h(s)\hat{f}(x,s)\mathrm{d}s,
  \end{equation}
  where
  \begin{equation}\label{eq:hGs}
    \hat{G}_h(s)=(s^{\alpha}-\Delta)^{-1}-(s^{\alpha}-\Delta_{h})^{-1}P_h.
  \end{equation}
  By the similar estimate in the proof of Theorem \ref{thm:wpr} in \cite{LubichST1996}, it holds that $\|\hat{G}_h(s)\|\le Ch^2.$ Then for $\varepsilon=t^{-1}$, together with the condition $\|\hat{f}(s)\|\le c|s|^{-\mu-1}$ in Assumption \ref{af}, we have
  \begin{equation}\label{eq:errsFEM1}
    \begin{aligned}
      \|u(t)-u_h(t)\|&\le ch^2\int_{\Gamma_{\varepsilon}^{\theta}\cup S_{\varepsilon}}|e^{st}||s|^{-\mu-1}|\mathrm{d}s|\\
      &\le ch^2\left(\int_{\varepsilon}^{\infty}e^{\rho t\cos\theta }\rho^{-\mu-1}\mathrm{d}\rho+\int_{-\theta}^{\theta}e^{\varepsilon t\cos\xi}\varepsilon^{-\mu}\mathrm{d}\xi\right)\\
      &\le ct^{\mu}h^2,
    \end{aligned}
  \end{equation}
  which completes the proof.
\end{proof}

\subsection{Proof of Theorem \ref{thm:errslmFE}}\label{sec:app2}
For the convergence analysis of the lumped mass FEM, the quadrature
error operator $Q_h : X_h\rightarrow X_h$ was introduced in \cite{ChatzipantelidisLT:2012}, which is defined by
\begin{equation}\label{eq:Qh}
  (\nabla Q_h v,\nabla w)=(v,w)_h-(v,w),~~\forall~w\in X_h.
\end{equation}
It was analyzed in \cite{ChatzipantelidisLT:2012} that the quadrature error operator $Q_h$ due to mass lumping satisfies the following estimates.
\begin{lemma}[\cite{ChatzipantelidisLT:2012}]
  Let the operators $\bar{\Delta}_h$ and $Q_h$ defined by \eqref{eq:bDeltah} and \eqref{eq:Qh}, respectively. Then it holds that
  \begin{equation}\label{eq:eQh}
    \|\nabla Q_h\psi\|+h\|\bar{\Delta}_hQ_h\psi\|\le Ch^{p+1}\|\nabla^p\psi\|,\quad\forall~\psi\in X_h,~p=0,1.
  \end{equation}
  Furthermore, if the mesh is symmetric, then it satisfies
  \begin{equation}\label{eq:seQh}
    \|Q_h\psi\|\le ch^2\|\psi\|,\quad\forall~\psi\in X_h.
  \end{equation}
\end{lemma}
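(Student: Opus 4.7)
The plan is to follow the classical lumped-mass quadrature-error analysis, reducing the operator estimates on $Q_h$ to elementwise errors of the vertex rule. The key observation is that $\psi w$ is piecewise quadratic for $\psi,w\in X_h$, so the quadrature error on each simplex is controlled by products of gradients; inverse inequalities then trade powers of $h$ for powers of derivatives, and a duality argument provides the symmetric-mesh superconvergence.

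First I would exploit the defining identity \eqref{eq:Qh}: for any $\psi,w\in X_h$,
\begin{equation*}
(\nabla Q_h\psi,\nabla w)=(\psi,w)_h-(\psi,w)=\sum_{\tau\in\mathcal{T}_h}\varepsilon_\tau(\psi w),\qquad \varepsilon_\tau(g):=Q_{\tau,h}(g)-\int_\tau g\,dx.
\end{equation*}
Because $Q_{\tau,h}$ is exact on affine functions and $D^2(\psi w)=2\,\nabla\psi\otimes\nabla w$ is constant on each $\tau$, a Bramble--Hilbert argument gives $|\varepsilon_\tau(\psi w)|\le Ch_\tau^{2}\|\nabla\psi\|_{L^2(\tau)}\|\nabla w\|_{L^2(\tau)}$; summing and Cauchy--Schwarz yield $|(\psi,w)_h-(\psi,w)|\le Ch^{2}\|\nabla\psi\|\,\|\nabla w\|$. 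Testing with $w=Q_h\psi$ delivers $\|\nabla Q_h\psi\|\le Ch^{2}\|\nabla\psi\|$, which is \eqref{eq:eQh} at $p=1$; the $p=0$ case follows by the inverse inequality $\|\nabla\psi\|\le Ch^{-1}\|\psi\|$ on $X_h$. For the $\bar{\Delta}_h$-term I would combine \eqref{eq:bDeltah}, the norm equivalence of $(\cdot,\cdot)_h$ and $(\cdot,\cdot)$ on $X_h$, and a further inverse inequality to obtain $\|\bar{\Delta}_hQ_h\psi\|\le Ch^{-1}\|\nabla Q_h\psi\|$, which after multiplication by $h$ absorbs into the right-hand side of \eqref{eq:eQh}.

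For the symmetric-mesh bound \eqref{eq:seQh} I would invoke Aubin--Nitsche duality: let $z\in H^{2}\cap H_{0}^{1}$ solve $-\Delta z=Q_h\psi$ with $\|z\|_{H^{2}}\le C\|Q_h\psi\|$, and let $R_hz\in X_h$ be its Ritz projection. Then
\begin{equation*}
\|Q_h\psi\|^{2}=(\nabla z,\nabla Q_h\psi)=(\nabla R_hz,\nabla Q_h\psi)=(\psi,R_hz)_h-(\psi,R_hz),
\end{equation*}
so it remains to show this last difference is $O(h^{2}\|\psi\|\,\|Q_h\psi\|)$, rather than the $O(h\|\psi\|\,\|Q_h\psi\|)$ obtained by direct insertion of the bound of the previous paragraph.

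The main obstacle is precisely this final step. Gaining the extra factor of $h$ requires a sharper elementwise expansion of $\varepsilon_\tau(\psi R_hz)$ in which the leading-order term is shown to be odd under the local reflection induced by the mesh symmetry and therefore cancels when summed over symmetric pairs of simplices; the residual term is then $O(h^{3})$ per element and is controlled by $\|z\|_{H^{2}}\le C\|Q_h\psi\|$ through the stability of $R_h$. This cancellation is exactly the content of the symmetry hypothesis in \cite{ChatzipantelidisLT:2012}, from which I would import the refined elementwise identity verbatim; the rest of the argument is the reduction outlined above, which packages the result into the form used in the proof of Theorem \ref{thm:errslmFE}.
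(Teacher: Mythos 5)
This lemma is not proved in the paper at all: it is quoted from \cite{ChatzipantelidisLT:2012} with no internal argument, so there is no proof of the authors' own to compare yours against. Judged on its own terms, your reconstruction of \eqref{eq:eQh} is correct and essentially complete: the vertex rule is exact on affine functions, $D^2(\psi w)$ is elementwise constant for $\psi,w\in X_h$, so a Bramble--Hilbert/scaling argument gives $|(\psi,w)_h-(\psi,w)|\le Ch^2\|\nabla\psi\|\,\|\nabla w\|$, testing with $w=Q_h\psi$ yields the $p=1$ case, and the $p=0$ case together with the $\bar\Delta_h$-term follow from inverse inequalities and the equivalence of $\|\cdot\|_h$ and $\|\cdot\|$ on $X_h$ (this tacitly uses quasi-uniformity of $\mathcal{T}_h$, which you should state, and the $H^2$ regularity needed later requires convexity of $\Omega$).

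For \eqref{eq:seQh}, however, there is a genuine gap, and you acknowledge it yourself. The duality reduction $\|Q_h\psi\|^2=(\psi,R_hz)_h-(\psi,R_hz)$ is a correct and sensible setup, but the entire content of the symmetric-mesh hypothesis is concentrated in the single estimate you do not prove, namely $|(\psi,R_hz)_h-(\psi,R_hz)|\le Ch^2\|\psi\|\,\|z\|_{H^2}$. Asserting that the leading elementwise term ``is odd under the local reflection and cancels'' describes what must be shown rather than showing it: one has to write out the elementwise expansion of the quadrature functional on products of nodal basis functions, identify the antisymmetric leading part, verify its cancellation over reflection-symmetric patches of simplexes, and control the $O(h^{3})$ remainder by $\|z\|_{H^2}$ through the stability and approximation properties of $R_h$. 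Since you import exactly this step ``verbatim'' from \cite{ChatzipantelidisLT:2012}, your treatment of \eqref{eq:seQh} ultimately reduces to the same citation the paper itself makes and does not constitute an independent proof of the superconvergent bound; only the first half of the lemma is actually established by your argument.
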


With the quadrature error operator defined by \eqref{eq:Qh}, it yields from \eqref{eq:sFE} and \eqref{eq:slmFE} that the error $e_h(t)=u_h(t)-\bar{u}_h(t)$ satisfies
\begin{equation}\label{eq:eslmFE}
  {^C}D^{\alpha}_te_{h}(t)-\bar{\Delta}_{h}e_{h}(t)=-\bar{\Delta}_hQ_h{^C}D^{\alpha}_tu_{h}(t),~~\forall~t>0,\quad e_h(0)=0.
\end{equation}
Taking the Laplace transform on \eqref{eq:eslmFE} implies that
\begin{equation}\label{eq:heh}
  \hat{e}_h(s)=(s^{\alpha}-\bar{\Delta}_h)^{-1}\bar{\Delta}_hQ_h\big(s^{\alpha-1}P_hu^0-s^{\alpha}\hat{u}_h(s)\big).
\end{equation}
Since $\Delta_h$ satisfies the resolvent estimate $\|\big(s-\Delta_h\big)^{-1}\|\le M|s|^{-1}$, it is derived from \eqref{eq:LTsFEO} that
\begin{equation}\label{eq:huhs}
  \begin{aligned}
    \|\hat{u}_h(s)\|&=\big\|(s^{\alpha}-\Delta_h)^{-1}\big(s^{\alpha-1}P_hu^0+P_h\hat{f}(s)\big)\big\|\\
    &\le|s|^{-1}\|u^0\|+|s|^{-\alpha-\mu-1}.
  \end{aligned}
\end{equation}
In addition, the operator $\bar{\Delta}_h$ defined by \eqref{eq:bDeltah} also satisfies the resolvent estimate, then it follows from \eqref{eq:eQh} that
\begin{equation}\label{eq:teeh}
  \begin{aligned}
    &\|s^{\alpha-1}(s^{\alpha}-\bar{\Delta}_h)^{-1}\bar{\Delta}_hQ_hP_hu^0\|\le|s|^{-1}\|u^0\|,\\
    &\|s^{\alpha}(s^{\alpha}-\bar{\Delta}_h)^{-1}\bar{\Delta}_hQ_h\hat{u}_h(s)\|
    \le\|\hat{u}_h(s)\|.
  \end{aligned}
\end{equation}
Therefore, by \eqref{eq:huhs}, \eqref{eq:teeh} and the Cauchy's theorem, the inverse Laplace transform on \eqref{eq:heh} implies that the error $e_{h}(t)$ for $t>0$ can be represented by an integral over $\Gamma_{\varepsilon}^{\theta}\cup S_{\varepsilon}$ as follows
\begin{equation}\label{eq:eh}
  e_{h}(t)=\frac{1}{2\pi i}\int_{\Gamma_{\varepsilon}^{\theta}\cup S_{\varepsilon}}
  e^{st}(s^{\alpha}-\bar{\Delta}_{h})^{-1}\bar{\Delta}_hQ_h\big(s^{\alpha-1}P_hu^0-s^{\alpha}\hat{u}_h(s)\big)\mathrm{d}s.
\end{equation}

Now it is ready to establish the error estimate for the lumped mass finite element scheme \eqref{eq:slmFE}. The error is splitted into $u(t)-\bar{u}_h(t)=u(t)-u_h(t)+e_h(t)$ with $u_h(t)$ being the solution of the standard Galerkin finite element scheme in \eqref{eq:sFE}. Since the error $\|u(t)-u_h(t)\|$ is estimated in Theorems \ref{thm:errsFE} and \ref{thm:errsFE1}, then we next focus on the estimate of $\|e_h(t)\|$.

\begin{proof}[Proof of Theorem \ref{thm:errslmFE}]
  As $(s^{\alpha}-\bar{\Delta}_{h})^{-1}\bar{\Delta}_h=s^{\alpha}(s^{\alpha}-\bar{\Delta}_{h})^{-1}-I$ with $I$ being the identity operator, it follows from the resolvent estimate of $\bar{\Delta}_{h}$ that $\|(s^{\alpha}-\bar{\Delta}_{h})^{-1}\bar{\Delta}_h\|\le M+1$. Then we derive from \eqref{eq:eh} and \eqref{eq:huhs} with $u^0(x)\equiv0$ that
  \begin{equation}\label{eq:errslmFEM2}
    \|e_h(t)\|\le c\int_{\Gamma_{\varepsilon}^{\theta}\cup S_{\varepsilon}}|e^{st}||s|^{\alpha}\|Q_h\hat{u}_h(s)\||\mathrm{d}s|.
  \end{equation}
  By the trivial inequality $\|\psi\|\le c\|\nabla\psi\|$ for $\psi\in X_h$ and the estimate \eqref{eq:eQh}, it yields
  $$\|Q_h\hat{u}_h(s)\|\le\|\nabla Q_h\hat{u}_h(s)\|\le ch\|\hat{u}_h(s)\|.$$
  Together with \eqref{eq:huhs}, we obtain the estimate \eqref{eq:errslmFEM0} by \eqref{eq:errsFEM0} and the following argument
  \begin{equation*}
    \begin{aligned}
      \|e_h(t)\|&\le ch\int_{\Gamma_{\varepsilon}^{\theta}\cup S_{\varepsilon}}|e^{st}||s|^{-\mu-1}|\mathrm{d}s|\\
      &\le ch\left(\int_{\varepsilon}^{\infty}e^{\rho\cos\theta t}\rho^{-\mu-1}\mathrm{d}\rho+\int_{-\theta}^{\theta}e^{\varepsilon t\cos\xi}\varepsilon^{-\mu}\mathrm{d}\xi\right)\\
      &\le ct^{\mu}h.
    \end{aligned}
  \end{equation*}

  If the quadrature error operator $Q_h$ satisfies \eqref{eq:seQh}, then it follows the estimate \eqref{eq:errslmFEM1} from \eqref{eq:errsFEM0} and
  \begin{equation*}
    \begin{aligned}
      \|e_h(t)\|&\le c\int_{\Gamma_{\varepsilon}^{\theta}\cup S_{\varepsilon}}|e^{st}||s|^{\alpha}\|\hat{u}_h(s)\||\mathrm{d}s|
      \le ch^2\int_{\Gamma_{\varepsilon}^{\theta}\cup S_{\varepsilon}}|e^{st}||s|^{-\mu-1}|\mathrm{d}s|\\
      &\le ct^{\mu}h^2.
    \end{aligned}
  \end{equation*}
  This completes the proof.
\end{proof}

\section{Four lemmas}
We provide four preliminary lemmas for the error analysis of the GLBE scheme \eqref{eq:GLBE} and the FBDF22 scheme \eqref{eq:FBDF22}.
% Here Lemma \ref{le:g0} presents a refinement compared with the existing result in \cite{Gunzburger2019}.

\begin{lemma}\label{le:g0}
  If $z\in\Sigma_{\pi/2}$, then $(1-e^{-z})^{\alpha}\in\Sigma_{\alpha\pi/2}$. Otherwise if $z\in\Sigma_{\theta}\setminus\Sigma_{\pi/2}$ and its imaginary part satisfying $|\Im(z)|\le\pi$ for $\theta\in(\pi/2,\pi)$, then $(1-e^{-z})^{\alpha}\in\Sigma_{\alpha\theta}$.
\end{lemma}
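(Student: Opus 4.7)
\medskip

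\noindent\textbf{Proof proposal.}
The plan is to reduce the lemma to the corresponding statement about $1-e^{-z}$ instead of its $\alpha$-th power. Since the principal branch of $w\mapsto w^{\alpha}$ maps $\Sigma_{\phi}$ into $\Sigma_{\alpha\phi}$ whenever $\phi\in(0,\pi]$ and $\alpha\in(0,1)$, it is enough to establish
\begin{equation*}
z\in\Sigma_{\pi/2}\ \Longrightarrow\ 1-e^{-z}\in\Sigma_{\pi/2},\qquad z\in(\Sigma_{\theta}\setminus\Sigma_{\pi/2})\cap\{|\Im z|\le\pi\}\ \Longrightarrow\ 1-e^{-z}\in\Sigma_{\theta}.
\end{equation*}

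For the first implication I would simply write $z=x+iy$ with $x>0$ and compute $\Re(1-e^{-z})=1-e^{-x}\cos y$. Since $e^{-x}<1$ forces $e^{-x}\cos y<1$, the real part of $1-e^{-z}$ is strictly positive, so $1-e^{-z}\in\Sigma_{\pi/2}$, and a single application of the power-map reduction yields $(1-e^{-z})^{\alpha}\in\Sigma_{\alpha\pi/2}$.

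For the second implication the strategy is to pass to the factorization $1-e^{-z}=2e^{-z/2}\sinh(z/2)$, which gives (with a careful choice of branch) $\arg(1-e^{-z})=-\Im(z)/2+\arg\sinh(z/2)$. By the symmetry $z\mapsto\bar z$ I can assume $y=\Im z>0$; combined with $x=\Re z\le 0$ and $0<y\le\pi$, the identity $\sinh(z/2)=\sinh(x/2)\cos(y/2)+i\cosh(x/2)\sin(y/2)$ places $\arg\sinh(z/2)$ in $[\pi/2,\pi)$. Writing $t=-x/2\ge 0$, one obtains $\arg\sinh(z/2)=\pi-\arctan(\coth(t)\tan(y/2))$ while $\arg z=\pi-\arctan(y/(2t))$. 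The two elementary inequalities $\coth(t)>1/t$ (equivalent to $\tanh(t)<t$) and $\tan(y/2)>y/2$ on $(0,\pi/2)$ then imply $\coth(t)\tan(y/2)>y/(2t)$, so $\arg\sinh(z/2)<\arg z<\theta$, and hence $\arg(1-e^{-z})=-y/2+\arg\sinh(z/2)<\theta$. The lower bound $\arg(1-e^{-z})\ge 0>-\theta$ follows immediately from $\arg\sinh(z/2)\ge\pi/2\ge y/2$. The degenerate subcases $x=0$ and $y=\pi$ are handled separately: in both, $\Re\sinh(z/2)=0$, so $\arg\sinh(z/2)=\pi/2$ and $\arg(1-e^{-z})=\pi/2-y/2\in[0,\pi/2)\subset(-\theta,\theta)$.

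The main obstacle I anticipate is keeping the branch of the argument consistent, especially across the subregion $x<0$ where $\sinh(x/2)$ is negative and $\arg\sinh(z/2)$ lies in the second quadrant; the clean comparison $\arg\sinh(z/2)<\arg z$ rests on the two elementary monotone inequalities for $\coth$ and $\tan$, and everything else is essentially bookkeeping. Once the sectorial inclusion of $1-e^{-z}$ is proven in both cases, the conclusion for $(1-e^{-z})^{\alpha}$ follows by the power-map reduction stated at the outset.
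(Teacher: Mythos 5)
Your proposal is correct, and the reduction to the sectorial location of $1-e^{-z}$ followed by the principal power map is exactly how the paper begins; the first case ($\Re z>0$) is identical in both arguments. For the second case, however, you take a genuinely different route. The paper works with the ratio $\bigl|\Im(1-e^{-z})/\Re(1-e^{-z})\bigr|=|\sin y|/(\cos y-e^{x})$ in the only nontrivial subcase $\Re(1-e^{-z})<0$, and bounds it below by $-\tan\theta$ through a two-stage monotonicity argument: first in $y$ to push the minimum onto the ray $y=x\tan\theta$, then in $x$ along that ray. You instead factor $1-e^{-z}=2e^{-z/2}\sinh(z/2)$ and track arguments additively, which reduces the whole estimate to the two scalar inequalities $\tanh t<t$ and $\tan s>s$; the branch bookkeeping you worry about is fine, since $-y/2\in[-\pi/2,0)$ and $\arg\sinh(z/2)\in[\pi/2,\pi)$ sum to a value in $[0,\pi)$, inside the principal range, so no wrap-around occurs. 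Your version is arguably cleaner and yields the slightly sharper conclusion $0\le\arg(1-e^{-z})<\arg z$ for $\Im z\in(0,\pi]$ (hence strict membership in $\Sigma_{\theta}$, whereas the paper's non-strict bound $\tilde f\ge-\tan\theta$ literally only gives the closed sector); the paper's calculus approach avoids any discussion of branches of $\arg$ and of the factorization, at the cost of several derivative computations. Both are complete proofs of the lemma.
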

\begin{proof}
  If $z=x+iy\in\Sigma_{\pi/2}$, then $x>0$. This yields that the real part of $(1-e^{-z})$ satisfies
  \begin{equation*}
    \Re (1-e^{-z})=1-e^{-x}\cos y>0
  \end{equation*}
  for all $y\in\mathrm{R}$. Then we obtain $(1-e^{-z})\in\Sigma_{\pi/2}$ and $(1-e^{-z})^{\alpha}\in\Sigma_{\alpha\pi/2}$.

  Otherwise if $z\in\Sigma_{\theta}\setminus\Sigma_{\pi/2}$, then $x\leq0$ and $x\tan\theta \leq| y|\leq \pi$. It suffices to consider the case $\Re (1-e^{-z})<0$. We define
  \begin{equation*}
    f(x,y):=\Big|\frac{\Im(1-e^{-z})}{\Re(1-e^{-z})}\Big|=\frac{|\sin y|}{\cos y -e^{x}}.
  \end{equation*}
  From
  \begin{equation*}
    \frac{\partial f(x,y)}{\partial y}
    =\frac{1-e^{x}\cos y}{\left(\cos y -e^{x}\right)^{2}}\geq 0\quad\text{for}\quad x\tan\theta\leq y\leq \pi
  \end{equation*}
  and
  \begin{equation*}
    \frac{\partial f(x,y)}{\partial y}=\frac{e^{x}\cos y-1}{\left(\cos y-e^{x}\right)^{2}}\leq 0
    \quad \text{for}\quad -\pi\leq y\leq -x\tan\theta<0,
  \end{equation*}
  it follows that $f(x,y)\geq f(x,x\tan\theta)$ for $x\leq0$. Taking the derivative of $\tilde{f}(x,{\theta}):=f(x,x\tan\theta)$ with respect to $x$ arrives at
  \begin{equation*}
    \frac{\partial \tilde{f}}{\partial x}=\frac{\tan\theta+e^{x}\sin(x\tan\theta)-e^{x}\cos(x\tan\theta)\tan\theta}{\left(\cos (x\tan\theta)-e^{x}\right)^{2}}=:\frac{g(x,\theta)}{\left(\cos (x\tan\theta)-e^{x}\right)^{2}}.
  \end{equation*}
  From $\partial g/\partial x=e^{x}\sin(x\tan\theta)(1+\tan^{2}\theta)\geq 0$,
  it follows that $g(x,\theta)\leq g(0,\theta)=0$. This leads to $\tilde{f}(x,{\theta})\geq \tilde{f}(0,{\theta})=-\tan\theta$ for any $\theta\in (\pi/2,\pi)$.
  Therefore, we obtain $(1-e^{-z})\in\Sigma_{\theta}$ and the desired result.
\end{proof}

\begin{lemma} \label{le:45}
  If $z\in\mathbb{C}$ and $|z|\le r$ for finite $r>0$, then
  \begin{equation}
    |1-e^{-z}|\leq C|z|
  \end{equation}
  and
  \begin{equation}
    |z^{\beta}-(1-e^{-z})^{\beta}|\leq C|z|^{\beta+1}
  \end{equation}
  hold for $0<\beta\leq 1$, where $C$ denotes a generic constant dependent on the radius $r$.
\end{lemma}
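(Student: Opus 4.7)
The plan is to establish both bounds by combining Taylor analysis near $z=0$ with a uniform estimate on the closed disk $\{|z|\le r\}$. For the first inequality, I would write $1-e^{-z}=z\int_0^1 e^{-tz}\,\mathrm{d}t$ by the fundamental theorem of calculus along the line segment from $0$ to $z$, and then bound $|e^{-tz}|\le e^{t|z|}\le e^{r}$ for $t\in[0,1]$. This immediately yields $|1-e^{-z}|\le e^{r}|z|$, which is the first assertion with $C=e^{r}$.

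For the second inequality, the key reduction is to factor out $z^{\beta}$,
\begin{equation*}
  z^{\beta}-(1-e^{-z})^{\beta}=z^{\beta}\bigl[1-\phi(z)^{\beta}\bigr],\qquad \phi(z):=\frac{1-e^{-z}}{z},
\end{equation*}
where $\phi$ extends to an entire function with $\phi(0)=1$ and power series $\phi(z)=\sum_{k=0}^{\infty}(-z)^{k}/(k+1)!$. The problem therefore reduces to showing $|1-\phi(z)^{\beta}|\le C|z|$. The power series gives $|\phi(z)-1|\le C_{1}|z|$ uniformly on $\{|z|\le r\}$, and for $z$ in a small disk $\{|z|\le\delta\}$ the value $\phi(z)$ lies in a neighborhood of $1$ on which the principal branch of $w\mapsto w^{\beta}$ is smooth; the mean value inequality then gives $|1-\phi(z)^{\beta}|\le C_{2}|\phi(z)-1|\le C|z|$. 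Multiplying by $|z|^{\beta}$ proves the claim for $|z|\le\delta$.

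For the remaining range $\delta\le|z|\le r$, I would argue by a crude ratio estimate: the triangle inequality together with the first part of the lemma gives $|z^{\beta}-(1-e^{-z})^{\beta}|\le|z|^{\beta}+|1-e^{-z}|^{\beta}\le r^{\beta}+(e^{r}r)^{\beta}$, while $|z|^{\beta+1}\ge\delta^{\beta+1}$, so the ratio is uniformly bounded by a constant depending only on $r$. Combining the two regimes yields the bound on the entire disk.

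The main subtlety is the handling of the complex power $w^{\beta}$ when $w=\phi(z)$ moves away from a small disk around $1$, since $\phi$ vanishes at $z=2\pi i k$ with $k\ne 0$ and the power requires a branch choice. In the paper's applications $z=s\tau$ lies in a sector contained in $\mathbb{C}\setminus(-\infty,0]$, where the principal branch of $(\cdot)^{\beta}$ is analytic; moreover $1-e^{-z}$ is real negative precisely when $z$ is real negative, so both $z^{\beta}$ and $(1-e^{-z})^{\beta}$ are cut along the same ray and the factorization $(1-e^{-z})^{\beta}=z^{\beta}\phi(z)^{\beta}$ holds with consistent branches on the relevant domain, making the Taylor/Lipschitz argument go through without obstruction.
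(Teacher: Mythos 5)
Your proof is correct, and for the second inequality it takes a genuinely different route from the paper. The paper proves the two Taylor bounds $|1-e^{-z}|\le \frac{e^{r}-1}{r}|z|$ and $|z-(1-e^{-z})|\le \frac{e^{r}-1-r}{r^{2}}|z|^{2}$ directly from the exponential series, and then concludes by citing the known inequality $|a^{\beta}-b^{\beta}|\le \max\{|a|^{\beta-1},|b|^{\beta-1}\}\,|a-b|$ from the Lubich--Sloan--Thom\'ee and Jin--Li--Zhou references with $a=z$, $b=1-e^{-z}$; you instead factor $z^{\beta}-(1-e^{-z})^{\beta}=z^{\beta}\bigl(1-\phi(z)^{\beta}\bigr)$ with $\phi(z)=(1-e^{-z})/z$ and combine a local Lipschitz estimate for $w\mapsto w^{\beta}$ near $w=1$ (whose derivative $\beta w^{\beta-1}$ is bounded uniformly in $\beta\in(0,1]$ on a fixed disk about $1$) with a crude compactness bound on the annulus $\delta\le|z|\le r$. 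Your version is self-contained, and the two-regime split actually handles a point the paper's argument glosses over: when $r\ge 2\pi$ the disk contains zeros of $1-e^{-z}$ at $z=2\pi ik$, $k\ne0$, where the factor $|1-e^{-z}|^{\beta-1}$ in the cited inequality blows up while $|z-(1-e^{-z})|$ does not vanish, so the paper's chain of estimates does not directly yield $C|z|^{\beta+1}$ there, whereas your triangle-inequality bound on the outer annulus does. The price is the case split and the explicit branch discussion; your closing caveat about the branch of $w^{\beta}$ and the factorization $(z\phi(z))^{\beta}=z^{\beta}\phi(z)^{\beta}$ is apt, and the same implicit restriction to a sector underlies the paper's cited inequality, so neither proof is worse off in that respect.
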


\begin{proof}
  Using Taylor's expansion of $e^{-z}$ at $0$, we derive
  \begin{equation*}
     |1-e^{-z}|=|1-\sum_{n=0}^{\infty}\frac{(-z)^{n}}{\Gamma(n+1)}|\leq e^{|z|}-1\leq \frac{e^{r}-1}{r} |z|
  \end{equation*}
  and similarly
  \begin{equation*}
    |z-(1-e^{-z})|=|z-1+\sum_{n=0}^{\infty}\frac{(-z)^{n}}{\Gamma(n+1)}|
    \leq e^{|z|}-1-|z|\leq\frac{e^{r}-1-r}{r^{2}}|z|^{2}.
  \end{equation*}
  Furthermore, for $0<\beta<1$, by the result in \cite{JinLZ2017,LubichST1996}, we have
  \begin{equation*}
    |z^{\beta}-(1-e^{-z})^{\beta}|\leq \max\{|z|^{\beta-1}, |1-e^{-z}|^{\beta-1}\} |z-(1-e^{-z})|,
  \end{equation*}
  which completes the proof.
\end{proof}

%The following estimates explain that the FBDF22 scheme possesses an improved order of accuracy compared with the GLBE scheme \eqref{eq:GLBE} in the previous subsection.
%The following two lemmas are useful in the error analysis of the FBDF22 scheme \ref{eq:FBDF22}.
\begin{lemma} \label{le:46}
  Let $|z|\le r$ for finite $r$. Then it holds that
  \begin{equation}\label{eq:4.2.1}
    \begin{split}
      &|\frac{3}{2}-2e^{-z}+\frac{1}{2}e^{-2z}|\leq C|z|, \\
      &|z^{\beta}-(\frac{3}{2}-2e^{-z}+\frac{1}{2}e^{-2z})^{\beta}|\leq C|z|^{\beta+2}
    \end{split}
  \end{equation}
  for $0<\beta\le 1$, where $C=C(r)$.
\end{lemma}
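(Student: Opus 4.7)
The plan is to proceed in direct analogy with Lemma \ref{le:45}, the only structural difference being that the symbol $g(z):=\frac{3}{2}-2e^{-z}+\frac{1}{2}e^{-2z}$ associated with BDF2 carries one extra order of cancellation at the origin than $1-e^{-z}$, which accounts for the exponent $\beta+2$ rather than $\beta+1$ in the final estimate.

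First I would Taylor-expand $e^{-z}$ and $e^{-2z}$ around $z=0$ to obtain
\[
g(z)=\sum_{n\ge1}\frac{(-z)^n}{n!}\bigl(2^{n-1}-2\bigr)=z-\tfrac{1}{3}z^3+\mathcal{O}(z^4).
\]
Direct inspection of the coefficients (namely $g(0)=0$, $g'(0)=1$, $g''(0)=0$, $g'''(0)=-2$) gives simultaneously the first assertion $|g(z)|\le C|z|$ on the disk $|z|\le r$ and, more importantly, the sharper residual bound $|z-g(z)|\le C|z|^3$, with $C=C(r)$. This extra vanishing is the engine for the second estimate.

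For the fractional estimate, the case $\beta=1$ is immediate since $|z-g(z)|\le C|z|^3=C|z|^{\beta+2}$. For $0<\beta<1$ I would mimic the last step of the proof of Lemma \ref{le:45}: apply the inequality
\[
|z^{\beta}-g(z)^{\beta}|\le \max\bigl\{|z|^{\beta-1},\,|g(z)|^{\beta-1}\bigr\}\,|z-g(z)|,
\]
and invoke the Taylor relation $g(z)=z(1+\mathcal{O}(z^2))$ to control the maximum by $C|z|^{\beta-1}$. Multiplying the two bounds yields $|z^\beta-g(z)^\beta|\le C|z|^{\beta-1}\cdot|z|^3=C|z|^{\beta+2}$, as desired.

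The main technical point (and the only real obstacle) will be to ensure that $|g(z)|$ is genuinely comparable to $|z|$ uniformly on the admissible domain, so that $|g(z)|^{\beta-1}$ really is of order $|z|^{\beta-1}$ and both arguments of the fractional power stay off the branch cut. Solving $g(z)=0$ via the substitution $w=e^{-z}$ reduces to $w^2-4w+3=0$, giving zeros only at $z=2\pi i k$ and $z=-\ln3+2\pi i k$; therefore on the truncated sector implicitly used in the paper's contour arguments (bounded modulus $|z|\le r$ inside $\Sigma_{\theta}$ with $|\Im z|\le\pi$ in the spirit of Lemma \ref{le:g0}) the origin is the sole zero of $g$, and a compactness argument delivers a uniform lower bound $|g(z)|\ge c|z|$. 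This justifies the comparability step and completes the proof.
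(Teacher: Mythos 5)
Your proof follows the paper's argument essentially verbatim: Taylor expansion of $e^{-z}$ and $e^{-2z}$ at the origin yields $|\frac{3}{2}-2e^{-z}+\frac{1}{2}e^{-2z}|\le C|z|$ and $|z-(\frac{3}{2}-2e^{-z}+\frac{1}{2}e^{-2z})|\le C|z|^{3}$, and the fractional estimate then follows from the same mean-value-type inequality $|a^{\beta}-b^{\beta}|\le\max\{|a|^{\beta-1},|b|^{\beta-1}\}\,|a-b|$ that the paper invokes by citing Lubich--Sloan--Thom\'ee and Jin--Li--Zhou. Your explicit verification that $|\frac{3}{2}-2e^{-z}+\frac{1}{2}e^{-2z}|$ is comparable to $|z|$ on the relevant truncated strip (needed because $\beta-1<0$) is a point the paper leaves implicit in that citation, and is a sound and welcome addition rather than a deviation.
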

\begin{proof}
  Using Taylor's expansions of $e^{-z}$ and $e^{-2z}$ at $z=0$ arrives at
  \begin{equation*}
    \begin{split}
      \Big|\frac{3}{2}-2e^{-z}+\frac{1}{2}e^{-2z}\Big|
      &=\Big|-2\sum_{n=1}^{\infty}\frac{(-z)^{n}}{\Gamma(n+1)}
      +\frac{1}{2}\sum_{n=1}^{\infty}\frac{(-2z)^{n}}{\Gamma(n+1)}\Big|    \\
      &\leq 2(e^{|z|}-1)+\frac{1}{2}(e^{2|z|}-1) \\
      &\leq \Big(\frac{1}{2}e^{2r}+2e^{r}-\frac{5}{2}\Big)r^{-1}|z|
    \end{split}
  \end{equation*}
  and
  \begin{equation*}
    \begin{split}
      \Big|z-\frac{3}{2}+2e^{-z}-\frac{1}{2}e^{-2z}\Big|
      &\leq 2\Big(e^{|z|}-\frac{1}{2}|z|^{2}-|z|-1\Big)+\frac{1}{2}\big(e^{2|z|}-2|z|^{2}-2|z|-1\big) \\
      &\leq \Big(\frac{1}{2}e^{2r}+2e^{r}-2r^{2}-3r-\frac{5}{2}\Big)r^{-3}|z|^{3}.
    \end{split}
  \end{equation*}
  Then the second estimate of \eqref{eq:4.2.1} is derived from the approach proposed in \cite{LubichST1996,JinLZ2017}.
\end{proof}
\begin{lemma}\label{le:fbdf2}
  If $z\in\Sigma_{\pi/2}$, then $(\frac{3}{2}-2e^{-z}+\frac{1}{2}e^{-2z})^{\alpha}\in\Sigma_{\alpha\pi/2}$. Otherwise if $z\in\Sigma_{\theta}\setminus\Sigma_{\pi/2}$ for $\theta\in (\pi/2, \tilde{\theta})$ with some $\tilde{\theta}\in(\pi/2,\pi)$ and $|\Im z|\le\pi$, then there corresponds some $\vartheta\in (\pi/2,\pi)$ such that $(\frac{3}{2}-2e^{-z}+\frac{1}{2}e^{-2z})^{\alpha}\in\Sigma_{\alpha\vartheta}$.
\end{lemma}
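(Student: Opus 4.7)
The plan is to exploit the factorization
\begin{equation*}
  w(z) := \tfrac{3}{2} - 2e^{-z} + \tfrac{1}{2}e^{-2z} = \tfrac{1}{2}(1-e^{-z})(3-e^{-z}),
\end{equation*}
which follows from the identity $\tfrac{1}{2}(u-1)(u-3)=\tfrac{1}{2}u^{2}-2u+\tfrac{3}{2}$ evaluated at $u=e^{-z}$. With this in hand, it suffices to locate $\arg w(z)$ inside a sector strictly contained in $(-\pi,\pi)$; the power $\alpha<1$ then shrinks it down to $\Sigma_{\alpha\pi/2}$ or $\Sigma_{\alpha\vartheta}$ as required.

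For the first assertion, write $z=x+iy$ with $x>0$. Using $\cos 2y = 2\cos^{2}y-1$, a short computation gives the identity
\begin{equation*}
  \Re w(z) = (1-e^{-x}\cos y)^{2} + \tfrac{1}{2}(1-e^{-2x}),
\end{equation*}
whose second term is strictly positive for $x>0$. Hence $\Re w(z)>0$, i.e.\ $w(z)\in \Sigma_{\pi/2}$, and raising to the power $\alpha$ yields $w(z)^{\alpha}\in \Sigma_{\alpha\pi/2}$.

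For the second assertion, let $z\in\Sigma_{\theta}\setminus\Sigma_{\pi/2}$ with $|\Im z|\le\pi$, so $x=\Re z\le 0$. The geometric condition $|\arg z|<\theta$ forces $|x|<|y|\cot(\pi-\theta)\le \pi\cot(\pi-\theta)$, which tends to $0$ as $\theta\downarrow \pi/2$. I will choose $\tilde\theta\in(\pi/2,\pi)$ once, sufficiently close to $\pi/2$, so that $M:=\exp(\pi\cot(\pi-\tilde\theta))$ satisfies $M<3$; this is possible since $\cot(\pi-\tilde\theta)\to 0^{+}$. For any $\theta\in(\pi/2,\tilde\theta)$ one then has $|e^{-z}|=e^{-x}\le M<3$, whence
\begin{equation*}
  \Re(3-e^{-z}) \ge 3-M>0,\qquad |\Im(3-e^{-z})|\le M,
\end{equation*}
so $|\arg(3-e^{-z})|\le \arctan(M/(3-M))=:\eta<\pi/2$, where $\eta$ depends only on $\tilde\theta$. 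Combining with $|\arg(1-e^{-z})|<\theta$ from Lemma \ref{le:g0},
\begin{equation*}
  |\arg w(z)| \le |\arg(1-e^{-z})|+|\arg(3-e^{-z})| < \theta+\eta.
\end{equation*}
By shrinking $\tilde\theta$ further if necessary (which makes both $M$ and $\eta$ smaller), we may guarantee $\tilde\theta+\eta<\pi$; then $\vartheta:=\tilde\theta+\eta\in(\pi/2,\pi)$ works, and $w(z)^{\alpha}\in\Sigma_{\alpha\vartheta}$.

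The main obstacle, and the reason the lemma restricts $\theta$ to a subinterval $(\pi/2,\tilde\theta)$ rather than all of $(\pi/2,\pi)$, is precisely the control of the auxiliary factor $3-e^{-z}$ in Part~2: if $\theta$ is too close to $\pi$, the bound $|x|\le\pi\cot(\pi-\theta)$ permits $e^{-x}$ to exceed $3$, so $3-e^{-z}$ can wind around the origin and the neat angle-addition argument collapses. The proof hinges on choosing $\tilde\theta$ once and for all so that $M<3$ and simultaneously $\tilde\theta+\arctan(M/(3-M))<\pi$; everything else is routine.
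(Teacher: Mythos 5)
Your proof is correct, but it takes a genuinely different route from the paper. You factor the BDF2 symbol as $\frac{3}{2}-2e^{-z}+\frac{1}{2}e^{-2z}=\frac{1}{2}(1-e^{-z})(3-e^{-z})$ and control the argument of each factor separately: the first via Lemma \ref{le:g0}, the second via the elementary observation that $|e^{-z}|=e^{-x}$ stays below $3$ once $\tilde\theta$ is close enough to $\pi/2$ (since $|\Re z|\le\pi\cot(\pi-\theta)\to 0$), so that $3-e^{-z}$ lies in a fixed subsector of the right half-plane. The paper instead works directly with $\Re w$ and $\Im w$: it shows $\Im w=e^{-x}\sin y\,(2-e^{-x}\cos y)$ keeps a fixed sign by proving $2-e^{-x}\cos y>0$ for $\theta<\tilde\theta$ (with $\tilde\theta$ defined implicitly by $2-e^{-(\tilde\theta-\pi/2)/\tan\tilde\theta}\sin\tilde\theta=0$), and then bounds the ratio $|\Im w/\Re w|$ from below by a monotonicity analysis in $x$ and $y$, extracting $\vartheta$ from the resulting extremum. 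Your argument is shorter and more transparent, reuses Lemma \ref{le:g0} instead of redoing a calculus minimization, and makes the source of the restriction $\theta<\tilde\theta$ explicit (the factor $3-e^{-z}$ must not wind around the origin); the price is a potentially smaller admissible $\tilde\theta$ than the paper's, which is immaterial since the lemma only asserts the existence of some $\tilde\theta\in(\pi/2,\pi)$. Both proofs handle the first assertion identically through the identity $\Re w=(1-e^{-x}\cos y)^2+\frac{1}{2}(1-e^{-2x})$.
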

\begin{proof}
  For $z=x+iy\in\Sigma_{\pi/2}$, it follows that $x>0$ and then
  \begin{equation*}
    \begin{split}
      \Re\Big(\frac{3}{2}-2e^{-z}+\frac{1}{2}e^{-2z}\Big)&=\frac{3}{2}-2e^{-x}\cos y+\frac{1}{2}e^{-2x}\cos(2y)\\
      &=\frac{1}{2}(1-e^{-2x})+(1-e^{-x}\cos y)^{2}>0.
    \end{split}
  \end{equation*}
  This yields $\left(\frac{3}{2}-2e^{-z}+\frac{1}{2}e^{-2z}\right)^{\alpha}\in \Sigma_{\alpha\pi/2}$.

  If $z\in\Sigma_{\theta}\setminus\Sigma_{\pi/2}$ and $|\Im z|\le\pi$, then $x\tan\theta\leq|y|\leq \pi$ and $x\in[\pi/\tan\theta,0]$. It yields
  \begin{equation*}
    \Im(\frac{3}{2}-2e^{-z}+\frac{1}{2}e^{-2z})=e^{-x}\sin y(2-e^{-x}\cos y).
  \end{equation*}
  Set $g(x,y)=2-e^{-x}\cos y$. We find that $\partial g/\partial y\geq 0$ for $y\in [x\tan\theta,\pi]$ and $\partial g/\partial y\leq 0$ for $y\in [-\pi,-x\tan\theta]$. This leads to $g(x,y)\geq g(x,x\tan\theta)$. Then taking the derivative of $g(x,x\tan\theta)$ with respect to $x$, we get
  \begin{equation*}
    g(x,y)\geq 2-e^{-(\theta-\pi/2)/\tan\theta}\sin\theta:=\tilde{g}(\theta)>\tilde{g}(\tilde{\theta})=0
  \end{equation*}
  for $\theta\in(\pi/2, \tilde{\theta})$, where $\tilde{\theta}$ is implicitly determined by $\tilde{g}(\tilde{\theta})=0$ as $\mathrm{d}\tilde{g}/\mathrm{d}\theta<0$ for $\theta\in(\pi/2,\pi)$.

  Next it suffices to consider the case $\theta\in (\pi/2,\tilde{\theta})$ and  $\Re(\frac{3}{2}-2e^{-z}+\frac{1}{2}e^{-2z})<0$. Let
  \begin{equation*}
    f(x,y):=\Big|\frac{\Im(\frac{3}{2}-2e^{-z}+\frac{1}{2}e^{-2z})}{\Re(\frac{3}{2}-2e^{-z}+\frac{1}{2}e^{-2z})}\Big|
    =\frac{e^{-x}|\sin y|(2-e^{-x}\cos y)}{-\frac{3}{2}+2e^{-x}\cos y-\frac{1}{2}e^{-2x}\cos(2y)}.
  \end{equation*}
  For $y\in [0, \pi]$, we obtain
  \begin{equation*}
    \frac{\partial f(x,y)}{\partial x}=\frac{e^x\sin y\left(1-3e^{x}\cos y+3e^{2x}\right)}{\left(\frac{3}{2}e^{2x}-2e^{x}\cos y+\frac{1}{2}\cos(2y)\right)^{2}}\geq 0
  \end{equation*}
  in view of
  \begin{equation*}
    1-3e^{x}\cos y+3e^{2x}\geq 1-3e^{x}+3e^{2x}=3(e^{x}-\frac{1}{2})^{2}+\frac{1}{4}>0.
  \end{equation*}
  Together with $f(x,y)=f(x,-y)$, it holds that
  \begin{equation*}
    f(x,\pm y)\geq f(y/\tan\theta,y):=\tilde{f}(y,\theta)
  \end{equation*}
  for $0\leq y\leq \pi$. Similarly, from the relation $\frac{\partial \tilde{f}(y,\theta)}{\partial \theta} =\frac{\partial f}{\partial x}(y/\tan\theta,y)\frac{\mathrm{d}(y/\tan\theta)}{\mathrm{d}\theta}<0$ together with  $\tilde{f}(0,\theta)=-\tan\theta>0$ and $\Re(3/2-2e^{-z}+1/2e^{-2z})>0$ for $y=\pi$, it follows that $\tilde{f}(y,\theta)>0$ for any $\theta\in (\pi/2,\tilde{\theta})$. Then taking the derivative of $\tilde{f}$ with respect to $y$, we deduce that
  \begin{equation*}
    \tilde{f}(y,\theta)\geq \tilde{f}(y_{\theta},\theta)>0,
  \end{equation*}
  where $y_{\theta}$ satisfies $\frac{\partial\tilde{f}}{\partial y}(y_{\theta},\theta)=0$.
  Thus there corresponds some $\vartheta\in (\pi/2,\pi)$, defined by  $\tilde{f}(y_{\theta},\theta)=-\tan(\vartheta)$ such that $(\frac{3}{2}-2e^{-z}+\frac{1}{2}e^{-2z})\in\Sigma_{\vartheta}$. This completes the proof.
\end{proof}
%\section*{Acknowledgments}

\bibliographystyle{siam}
%\bibliography{../Ref4J}

\end{document}